\def\today{\ifcase\month\or
  January\or February\or March\or April\or May\or June\or 
  July\or August\or September\or October\or November\or December\fi
  \space\number\day, \number\year}
 \newtheorem{theorem}{Theorem}[section]
 \newcommand{\mc}{\mathcal}
 \newcommand{\M}{\mc{M}}
 \newcommand{\R}{\mathbb{R}}
 \newcommand{\N}{\mathbb{N}}
 \newcommand{\Z}{\mathbb{Z}}
 \newcommand{\ds}{\text{\rm d}s}
 \newcommand{\dx}{\text{\rm d}x}
 \newcommand{\dy}{\text{\rm d}y}
    \renewcommand{\d}{\text{\rm d}}
\newcommand{\wt}{\widetilde}
\newcommand{\var}{{\rm Var\,}}
\newcommand{\one}{\mathbbm 1}
\begin{document}

\title[]{Regularity of maximal operators: \\
recent progress and some open problems}
\author[Carneiro]{Emanuel Carneiro}
\date{\today}
\subjclass[2010]{42B25, 26A45, 46E35, 46E39.}
\keywords{Hardy-Littlewood maximal operator, Sobolev spaces, bounded variation, continuity, fractional maximal operator.}

\address{
ICTP - The Abdus Salam International Centre for Theoretical Physics\\
Strada Costiera, 11, I - 34151, Trieste, Italy.}
\address{IMPA - Instituto Nacional de Matem\'{a}tica Pura e Aplicada - Estrada Dona Castorina, 110, Rio de Janeiro, RJ 22460-320, Brazil.}
\email{carneiro@ictp.it}
\email{carneiro@impa.br}

\allowdisplaybreaks
\numberwithin{equation}{section}

\maketitle

\begin{abstract}
This is an expository paper on the regularity theory of maximal operators, when these act on Sobolev and BV functions, with a special focus on some of the current open problems in the topic. Overall, a list of fifteen research problems is presented. It summarizes the contents of a talk delivered by the author in the CIMPA 2017 Research School - Harmonic Analysis, Geometric Measure Theory and Applications, in Buenos Aires, Argentina.
\end{abstract}

\section{Introduction} 
Maximal operators are classical objects in analysis. They usually arise as important tools to prove different sorts of pointwise convergence results, e.g. Lebesgue's differentiation theorem, Carleson's theorem on the pointwise convergence of Fourier series, pointwise convergence of solutions of PDEs to the initial datum, and so on. Despite being extensively studied for decades, maximal operators still conceal some of their secrets, and understanding the intrinsic mapping properties of these operators in different function spaces still remains an active topic of research.

\smallskip

Throughout this paper we focus on the most classical of these objects, the Hardy-Littlewood maximal operator, and some of its variants. As we shall see, it will be important for our discussion to consider the centered and uncentered versions of this operator, discrete analogues, fractional analogues and convolution-type analogues. For $f \in L^1_{loc}(\R^d)$ we define the {\it centered Hardy-Littlewood maximal function} $Mf$ by 
\begin{equation}\label{HL_Max}
Mf(x) = \sup_{r > 0} \frac{1}{m(B(x,r))} \int_{B(x,r)} |f(y)|\,\dy\,,
\end{equation}
where $B(x,r)$ is the open ball of center $x$ and radius $r$, and $m(B(x,r))$ denotes its $d$-dimensional Lebesgue measure. The {\it uncentered maximal function} $\wt{M}f$ at a point $x$ is defined analogously, taking the supremum of averages over open balls that contain the point $x$, {\it but that are not necessarily centered at $x$.}

\smallskip

One of the fundamental results in harmonic analysis is the theorem of Hardy and Littlewood that states that $M: L^1(\R^d) \to L^{1,\infty}(\R^d)$ is a bounded operator. By interpolation with the trivial $L^{\infty}$-estimate, this yields the boundedness of $M: L^p(\R^d) \to L^{p}(\R^d)$ for $1 < p \leq \infty$. Another consequence of the weak-$(1,1)$ bound for $M$ is the Lebesgue's differentiation theorem. The $L^p$-mapping properties of the uncentered maximal operator $\wt{M}$ are exactly the same.

\smallskip

One may consider the action of the Hardy-Littlewood maximal operator in other function spaces and investigate whether it improves, preserves or destroys the a priori regularity of an initial datum $f$. This type of question is essentially the main driver of what we refer to here as {\it regularity theory for maximal operators}. Let us denote by $W^{1,p}(\R^d)$ the Sobolev space of functions $f \in L^p(\R^d)$ that have a weak gradient $\nabla f \in L^p(\R^d)$, with norm given by
$$\|f\|_{W^{1,p}(\R^d)} = \|f\|_{L^p(\R^d)} + \|\nabla f\|_{L^p(\R^d)}.$$
In 1997, J. Kinnunen wrote an enlightening paper \cite{Ki}, establishing the boundedness of the operator $M: W^{1,p}(\R^d) \to W^{1,p}(\R^d)$ for $1 < p \leq \infty$. This marks the beginning of our story. After that, a number of interesting works have devoted their attention to the investigation of the action of maximal operators on Sobolev spaces and on the closely related space of functions of bounded variation. This survey paper is brief account of some of the developments in this topic over the last 20 years, with a special focus on a list of fifteen open problems that may guide new endeavors.

\smallskip

The choice of topics and problems presented here is obviously biased by the personal preferences of the author and is by no means exhaustive. We shall present just a couple of brief proofs of some of the earlier results to give a flavor to the reader of what is going on, for the main purpose of this expository paper is to provide a light and inviting reading on the topic, especially to newcomers. We refer the reader to the original papers for the proofs of the results mentioned here.

\smallskip

For simplicity, all functions considered in this paper are real-valued functions.

\section{Kinnunen's seminal work}
Let us start by revisiting the main result of \cite{Ki} and its elegant proof.

\begin{theorem}[Kinnunen, 1997 - cf. \cite{Ki}]\label{Thm_Kin}
Let $1 < p < \infty$ and let $f \in W^{1,p}(\R^d)$. Then $Mf$ is weakly differentiable and
\begin{equation}\label{upper bound_0}
\big|\partial_i Mf(x)\big| \leq M\big(\partial_if)(x)
\end{equation}
for almost every $x \in \R^d$. Therefore $M: W^{1,p}(\R^d) \to W^{1,p}(\R^d)$ is bounded.
\end{theorem}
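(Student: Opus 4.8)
The plan is to prove the pointwise gradient bound \eqref{upper bound_0} first, and then deduce the boundedness of $M$ on $W^{1,p}$ as an immediate consequence. The key observation is that the maximal operator can be written as a supremum over a countable family of smooth averaging operators: for each $r>0$ let $A_rf(x) = m(B(x,r))^{-1}\int_{B(x,r)}|f(y)|\,\dy = (|f| * \chi_r)(x)$ where $\chi_r = m(B(0,r))^{-1}\one_{B(0,r)}$. Since $|f|\in W^{1,p}$ whenever $f\in W^{1,p}$ (with $|\nabla |f|\,| = |\nabla f|$ a.e.), and convolution with an $L^1$ kernel commutes with weak differentiation, each $A_rf$ is weakly differentiable with $\partial_i(A_rf) = A_r(\partial_i|f|)$, hence $|\partial_i(A_rf)(x)| \leq A_r(|\partial_i f|)(x) \leq M(\partial_i f)(x)$ for every $r$ and a.e.\ $x$.

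The next step is to pass from the individual averages to the supremum. One restricts the supremum defining $Mf$ to rational radii $r\in\Q_{>0}$; by continuity of $r\mapsto A_rf(x)$ (for, say, $f$ continuous, or in general using that one can still realize $Mf$ as a countable supremum) this does not change the value, so $Mf = \sup_{j} A_{r_j}f$ for an enumeration $\{r_j\}$ of the positive rationals. Each partial maximum $u_N := \max_{1\leq j\leq N} A_{r_j}f$ is weakly differentiable (a finite maximum of weakly differentiable functions is weakly differentiable, with gradient bounded a.e.\ by the maximum of the gradients), so $|\nabla u_N|\leq M(\partial_i f)$ componentwise, which gives $\|u_N\|_{W^{1,p}} \leq \|f\|_{L^p} + d^{1/2}\|M(\nabla f)\|_{L^p} \leq C_{p,d}\|f\|_{W^{1,p}}$ using the $L^p$-boundedness of $M$ (valid since $1<p<\infty$). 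The sequence $u_N$ increases pointwise to $Mf\in L^p$, so it is bounded in $W^{1,p}$; by weak compactness (reflexivity of $W^{1,p}$ for $1<p<\infty$) a subsequence converges weakly in $W^{1,p}$, and since $u_N\to Mf$ in, say, $L^p$ or a.e., the weak limit must be $Mf$. Therefore $Mf\in W^{1,p}$, and lower semicontinuity of the norm under weak convergence gives $\|\nabla Mf\|_{L^p} \leq \liminf_N \|\nabla u_N\|_{L^p} \leq d^{1/2}\|M(\nabla f)\|_{L^p}$. To upgrade this to the pointwise statement \eqref{upper bound_0}, one notes that $\partial_i u_N \to \partial_i Mf$ weakly and each $|\partial_i u_N|\leq M(\partial_i f)$, so the same bound is inherited by the limit pointwise a.e.\ (testing against nonnegative bump functions).

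The main obstacle I expect is the justification that the supremum over a \emph{countable} dense set of radii genuinely recovers $Mf$, together with the measurability and the ``finite maximum of weakly differentiable functions is weakly differentiable with the expected gradient bound'' lemma — this last fact is elementary but does require a short argument (e.g.\ writing $\max(u,v) = \tfrac12(u+v) + \tfrac12|u-v|$ and using that $|w|\in W^{1,p}_{loc}$ with $|\nabla|w|\,| = |\nabla w|$, or a direct approximation argument). Once that technical point is in place, everything else is soft functional analysis: the reduction to convolutions handles the single-scale estimate cleanly, and reflexivity of $W^{1,p}$ for $1<p<\infty$ does the rest. It is worth remarking that this argument genuinely uses $p>1$ in two essential places — the $L^p$-boundedness of $M$ and the reflexivity of $W^{1,p}$ — which is why the endpoint $p=1$ is not covered and indeed requires entirely different ideas.
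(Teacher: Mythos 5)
Your proposal is correct and follows essentially the same route as the paper's proof: realize $Mf$ as a countable supremum of convolutions $\varphi_{r_j}*|f|$ over rational radii, bound the gradient of each partial maximum pointwise by $M(\partial_i f)$, and use reflexivity of $W^{1,p}$ for $1<p<\infty$ to pass to the weak limit while preserving both membership in $W^{1,p}$ and the pointwise bound. The technical points you flag (continuity in $r$ of the averages, the lemma on finite maxima of Sobolev functions, and identification of the weak limit) are exactly the ones the paper handles by citation, so nothing is missing.
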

\begin{proof} 
Let $B = B(0,1) \subset \R^d$ be the unit ball and define
\begin{equation}\label{def_varphi_ball}
\varphi(x) := \frac{\one_B(x)}{m(B)},
\end{equation}
where $\one_B$ is the characteristic function of $B$. For $r >0$ let us define 
\begin{equation*}
\varphi_r(x) := r^{-d}\,\varphi(x/r).
\end{equation*}
With this notation we plainly have
$$Mf(x) = \sup_{r>0}\,(\varphi_r*|f|)(x).$$
Fix $1 \leq i \leq d$. Recall that if $f \in W^{1,p}(\R^d)$ then $|f| \in W^{1,p}(\R^d)$ and $|\partial_i |f|| = |\partial_if|$ almost everywhere (see for instance \cite[Theorem 6.17]{LL}). Let us enumerate the positive rational numbers as $\{r_1, r_2, r_3, \ldots\}$ and define $h_j := \varphi_{r_j}*|f|$. Then $h_j \in W^{1,p}(\R^d)$ and $\partial_i h_j = \varphi_{r_j}*\partial_i|f|$. 

\smallskip

Let $N\geq 1$ be a natural number and define $g_N(x) := \max_{1 \leq j \leq N} h_j(x)$. Note that $g_N \in W^{1,p}(\R^d)$ with 
$$g_N(x) \leq Mf(x)$$
and (see \cite[Theorem 6.18]{LL})
\begin{equation}\label{upper bound_1}
|\partial_i g_N(x)|  \leq \max_{1 \leq j \leq N} |\partial_ih_j(x)| \leq M(\partial_i f)(x)
\end{equation}
for almost every $x \in \R^d$. Then $\{g_N\}_{N\geq 1}$ is a bounded sequence in $W^{1,p}(\R^d)$ with the property that $g_N(x) \to Mf(x)$ pointwise as $N \to \infty$. Since $W^{1,p}(\R^d)$ is a reflexive Banach space, by passing to a subsequence if necessary, we may assume that $g_N$ converges weakly to a function $g \in W^{1,p}(\R^d)$ (a crucial point in this argument is that this weak limit is already born in $W^{1,p}(\R^d)$). Standard functional analysis tools (for instance, using Mazur's lemma \cite[Corollary 3.8 and Exercise 3.4]{Br}) lead to the conclusion that $Mf = g \in W^{1,p}(\R^d)$ and that the upper bound \eqref{upper bound_1} is preserved almost everywhere up to the weak limit. The latter assertion leads to \eqref{upper bound_0}.
\end{proof}
We call the attention of the reader for the use of the reflexivity of the space $W^{1,p}(\R^d)$, for $1 < p < \infty$, in the conclusion of the proof above. This is one of the obstacles when one considers the endpoint case $p=1$, as we shall see in the next section. The case $p=\infty$ can be dealt with directly. In fact, if $f \in W^{1,\infty}(\R^d)$ then $f$ can be modified on a set of measure zero to become Lipschitz continuous, with Lipschitz constant $L \leq \|\nabla f\|_{\infty}$. With the notation of the proof above, for a fixed $r$, each average $\varphi_r*|f|$ is Lipschitz with constant at most $L$. The pointwise supremum of uniformly Lipschitz functions is still Lipschitz with (at most) the same constant. This shows that $M: W^{1,\infty}(\R^d) \to W^{1,\infty}(\R^d)$ is a bounded operator.

\smallskip

If one is not necessarily interested in pointwise estimates for the derivative of the maximal function, there is a simpler argument using the characterization of Sobolev spaces via difference quotients \cite[Theorem 1]{HO}. This covers the general situation of sublinear operators that commute with translations. Recall that an operator $A:X \to Y$, acting between linear function spaces $X$ and $Y$, is said to be sublinear if $Af \geq 0$ a.e. for $f \in X$, and $A(f+g) \leq Af + Ag$ a.e. for $f,g \in X$. In what follows we let $f_y(x) :=f(x+ y)$ for $x, y \in \R^d$.

\begin{theorem}[Haj\l asz and Onninen, 2004 - cf. \cite{HO}]\label{A_operator}
Assume that the operator $A: L^p(\R^d) \to L^p(\R^d)$, $1 < p < \infty$, is bounded and sublinear. If $A(f_y) = (Af)_y$ for all $f \in L^p(\R^d)$ and all $y \in \R^d$, then $A: W^{1,p}(\R^d) \to W^{1,p}(\R^d)$ is also bounded.
\end{theorem}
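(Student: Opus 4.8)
The plan is to use the difference quotient characterization of Sobolev spaces, exactly as hinted in the text: a function $u \in L^p(\R^d)$ with $1<p<\infty$ belongs to $W^{1,p}(\R^d)$ if and only if $\sup_{0<|y|\le 1}\|y\|^{-1}\|u_y - u\|_{L^p(\R^d)} < \infty$, and in that case this supremum is comparable to $\|\nabla u\|_{L^p(\R^d)}$ (this is \cite[Theorem 1]{HO}). So given $f \in W^{1,p}(\R^d)$, it suffices to bound $\|(Af)_y - Af\|_{L^p}$ by a constant times $\|y\|$, and then conclude $Af \in W^{1,p}(\R^d)$ with $\|Af\|_{W^{1,p}} \lesssim \|f\|_{W^{1,p}}$.

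The key estimate comes from combining sublinearity with translation invariance. First I would observe that since $A$ commutes with translations, $(Af)_y = A(f_y)$. Then, using sublinearity applied to the decompositions $f_y = (f_y - f) + f$ and $f = (f - f_y) + f_y$, one gets $A(f_y) \le A(f_y - f) + A(f)$ and $A(f) \le A(f - f_y) + A(f_y)$ almost everywhere, hence
\begin{equation*}
|A(f_y)(x) - A(f)(x)| \le \max\{A(f_y - f)(x),\, A(f - f_y)(x)\} \le A(f_y - f)(x) + A(f - f_y)(x)
\end{equation*}
for almost every $x$. Taking $L^p$ norms and using the boundedness $A: L^p \to L^p$ with operator norm $C$, together with $\|f_y - f\|_{L^p} = \|f - f_y\|_{L^p}$, yields
\begin{equation*}
\|(Af)_y - Af\|_{L^p(\R^d)} \le 2C\,\|f_y - f\|_{L^p(\R^d)}.
\end{equation*}
Now dividing by $\|y\|$ and taking the supremum over $0 < |y| \le 1$, the right-hand side is controlled by $2C\,\|\nabla f\|_{L^p(\R^d)}$ via the difference quotient characterization applied to $f$ itself. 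This shows $Af \in W^{1,p}(\R^d)$ and gives the norm bound $\|Af\|_{W^{1,p}} \le \|Af\|_{L^p} + C'\|\nabla f\|_{L^p} \le C\|f\|_{L^p} + C'\|\nabla f\|_{L^p} \lesssim \|f\|_{W^{1,p}}$, completing the proof.

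There is really no serious obstacle here; the argument is short once one has the difference quotient characterization in hand. The only points requiring a little care are: (i) verifying that the pointwise inequality $|A(f_y) - A(f)| \le A(f_y-f) + A(f-f_y)$ genuinely follows from the stated definition of sublinearity (one needs both $A(f+g)\le Af+Ag$ and nonnegativity, and one applies it in both ``directions'' as above); and (ii) being careful that the translation-invariance hypothesis is stated for all $f \in L^p$ so it applies in particular to $f_y - f$ implicitly through the decomposition — actually we only need it in the form $(Af)_y = A(f_y)$, which is exactly the hypothesis. Note this argument does not produce the pointwise gradient bound \eqref{upper bound_0}; it only yields boundedness, which is all that is claimed. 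It is worth remarking that the same proof shows more generally that any bounded sublinear translation-invariant operator on $L^p$ maps $\dot W^{1,p}$ to $\dot W^{1,p}$, and works verbatim for vector-valued or higher-order difference quotient characterizations, which is why this approach is so flexible compared to Kinnunen's.
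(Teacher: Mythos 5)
Your argument is correct and is essentially the paper's own proof: both rest on the two-sided sublinearity estimate $|A(f_y)-Af|\le A(f_y-f)+A(f-f_y)$, the $L^p$-boundedness of $A$, and the difference quotient characterization of $W^{1,p}$ for $1<p<\infty$ (the paper invokes the coordinate-direction version from Gilbarg--Trudinger, Lemmas 7.23 and 7.24, while you use the equivalent formulation with general translations $y$). Your remark that this yields only norm bounds and not the pointwise gradient estimate \eqref{upper bound_0} matches the paper's framing of this theorem as the ``simpler argument'' alternative to Kinnunen's.
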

\begin{proof} Let $e_i$ be the unit coordinate vector in the $x_i$ direction. For $t >0$ we have
\begin{align*}
\|(Af)_{te_i} - Af\|_{L^p(\R^d)} &= \|A(f_{te_i}) - Af\|_{L^p(\R^d)} \\
& \leq \|A(f_{te_i} - f)\|_{L^p(\R^d)} + \|A(f - f_{te_i})\|_{L^p(\R^d)} \\
& \leq C \,\|f_{te_i} - f\|_{L^p(\R^d)}\\
& \leq C \,t\,\|\partial_i f\|_{L^p(\R^d)}.
\end{align*}
The last inequality above follows from \cite[Lemma 7.23]{GT}. Since the difference quotients $\big\{((Af)_{te_i} - Af)/t\big\}_{t>0}$ are uniformly bounded in $L^p(\R^d)$, an application of \cite[Lemma 7.24]{GT} guarantees that $Af \in W^{1,p}(\R^d)$ and 
$$\|\partial_i Af \|_{L^p(\R^d)} \leq C \,\|\partial_i f\|_{L^p(\R^d)}.$$
This concludes the proof.
\end{proof}
In the scope of Theorem \ref{A_operator}, one may consider the spherical maximal operator. Letting $S^{d-1}(x,r) \subset \R^d$ be the $(d-1)$-dimensional sphere of center $x$ and radius $r$, this operator is defined as
\begin{equation}\label{Spherical_max}
M_{S}f(x) = \sup_{r>0} \, \frac{1}{\omega_{d-1}r^{d-1}}\int_{S^{n-1}(x,r)} |f(z)|\,\d\sigma(z),
\end{equation}
where $\sigma$ is the canonical surface measure on $S^{d-1}(x,r)$ and $\omega_{d-1} = \sigma\big(S^{d-1}(0,1)\big)$. A remarkable result of Stein \cite{Stein1976} in dimension $d \geq 3$, and Bourgain \cite{Bourgain} in dimension $d=2$, establishes that $M_{S}: L^p(\R^d) \to L^p(\R^d)$ is a bounded operator for $p > d/(d-1)$. It plainly follows from Theorem \ref{A_operator} that $M_{S}: W^{1,p}(\R^d) \to W^{1,p}(\R^d)$ is also bounded for $p > d/(d-1)$ (the case $p = \infty$ is treated directly). 

\smallskip 

Theorem \ref{Thm_Kin} has been extended in many different ways over the last years, and we now mention a few of such related results. Kinnunen and Lindqvist \cite{KL} extended Theorem \ref{Thm_Kin} to a local version of the maximal operator. In this setting one considers a domain $\Omega \subset \R^d$, functions $f \in W^{1,p}(\Omega)$, and the maximal operator is taken over balls entirely contained in the domain $\Omega$. Extensions of Theorem \ref{Thm_Kin} to a multilinear setting are considered in the work of the author and Moreira \cite{CM} and by Liu and Wu \cite{LW}, and a similar result in fractional Sobolev spaces is the subject of the work of Korry \cite{Ko}. A fractional version of the Hardy-Littlewood maximal operator is considered in the paper \cite{KiSa} by Kinnunen and Saksman (we will return to this particular operator later on). An interesting variant of this result on Hardy-Sobolev spaces is considered in the recent work of P\'{e}rez, Picon, Saari and Sousa \cite{PPSS}.

\section{The endpoint Sobolev space}
With the philosophy that averaging is a smoothing process, we  would like to understand if certain smoothing features are still preserved when we take a pointwise supremum over averages. Understanding the situation described in Theorem \ref{Thm_Kin} at the endpoint case $p=1$ is a subtle issue. Of course, if $f \in L^1(\R^d)$ is non-identically zero, we already know that $f \notin L^{1}(\R^d)$, and the interesting question is whether one can control the behavior of the derivative of the maximal function. The following question was raised in the work of Haj\l asz and Onninen \cite[Question 1]{HO} and remains one of the main open problems in the subject.

\medskip

\noindent {\bf Question 1} (Haj\l asz and Onninen, 2004 - cf. \cite{HO}). {\it Is the operator $f \mapsto |\nabla Mf|$ bounded from $W^{1,1}(\R^d)$ to $L^1(\R^d)$\,{\rm?} Same question for the uncentered operator $\widetilde{M}$.}

\medskip

\noindent Naturally, this involves proving that $Mf$ is weakly differentiable, and establishing the bound 
\begin{equation}\label{Sec3_Bound_1}
\|\nabla Mf\|_{L^1(\R^d)} \leq C \big(\|f\|_{L^1(\R^d)} + \|\nabla f\|_{L^1(\R^d)}\big),
\end{equation}
for some universal constant $C = C(d)$. If the global estimate \eqref{Sec3_Bound_1} holds for every $f \in W^{1,1}(\R^d)$, a simple dilation argument implies that one should actually have
\begin{equation*}
\|\nabla Mf\|_{L^1(\R^d)} \leq C \,\|\nabla f\|_{L^1(\R^d)},
\end{equation*}
which reveals the true nature of the problem: if one can control the variation of the maximal function by the variation of the original function (the term {\it variation} here is used as the $L^1$-norm of the gradient).

\smallskip

Several interesting papers addressed Question $1$, which has been answered affirmatively in dimension $d=1$, but remains vastly open in dimensions $d\geq2$. We now comment a bit on these results.

\subsection{One-dimensional results} The achievements in dimension $d=1$ started with the work of Tanaka \cite{Ta}, for the uncentered maximal operator $\widetilde{M}$. In this particular work, Tanaka showed that if $f \in W^{1,1}(\R)$ then $\widetilde{M} f$ is weakly differentiable and 
\begin{equation}\label{Tanaka_boy}
\big\|\big(\widetilde{M} f\big)'\big\|_{L^1(\R)} \leq 2 \,\|f'\|_{L^1(\R)}
\end{equation}
(see also \cite{LCW}). This result was later refined by Aldaz and P\'{e}rez L\'{a}zaro in \cite[Theorem 2.5]{AP}. Letting $\var(f)$ denote the total variation of a function $f:\R \to \R$, they proved the following very interesting result.
\begin{theorem}[Aldaz and P\'{e}rez L\'{a}zaro, 2007 - cf. \cite{AP}]\label{Thm_APL}
Let $f:\R \to \R$ be a function of bounded variation. Then $\widetilde{M} f$ is an absolutely continuous function and we have the inequality
\begin{equation}\label{Var_Sharp}
\var\big( \widetilde{M} f \big) \leq \var(f).
\end{equation}
\end{theorem}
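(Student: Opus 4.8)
The plan is to prove the two assertions of Theorem~\ref{Thm_APL} together, exploiting the fact that the uncentered maximal function is particularly rigid in one dimension. First I would reduce to the case where $f$ has compact support and is, say, bounded (the general case following by a standard truncation/approximation argument, since $\var(f)<\infty$ forces $f$ to be bounded and to have limits at $\pm\infty$, and one checks that $\widetilde{M}f$ behaves well under these approximations). The key structural fact I would establish is a \emph{no-local-maximum principle}: at any point $x$ where $\widetilde{M}f(x) > |f(x)|$, the function $\widetilde{M}f$ cannot have a strict local maximum. Indeed, if $\widetilde{M}f(x)$ is attained (or approached) by averages over intervals containing $x$ whose average exceeds $|f(x)|$, then sliding such an interval slightly shows $\widetilde{M}f$ does not decrease on at least one side of $x$; more carefully, one shows that on any open interval where $\widetilde{M}f > |f|$ strictly, the function $\widetilde{M}f$ is \emph{monotone} on each component, because a local max there would come from an averaging interval that could be perturbed to give a strictly larger value. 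This is the heart of the Aldaz--P\'erez L\'azaro argument and I expect it to be the main obstacle: making the perturbation argument rigorous, and handling the supremum possibly not being attained, requires care.

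Granting this, the set $\{x : \widetilde{M}f(x) > |f(x)|\}$ is open, hence a countable disjoint union of open intervals $(a_k,b_k)$, and on each such interval $\widetilde{M}f$ is monotone with $\widetilde{M}f(a_k) = |f(a_k)|$ and $\widetilde{M}f(b_k)=|f(b_k)|$ at the endpoints (using continuity of $\widetilde{M}f$, which itself follows easily: $\widetilde{M}f$ is lower semicontinuous as a sup of continuous functions, and the reverse bound comes from the local analysis). On the complementary closed set $\widetilde{M}f = |f| \le \var(f)$-controlled quantities. Next I would show $\widetilde{M}f$ is locally absolutely continuous: on each monotone piece it is monotone and continuous hence AC there, and globally one assembles these, checking there is no singular part by noting the total variation is finite (from the bound we are about to prove) — alternatively invoke that a continuous function of bounded variation which is AC on a dense open set of full "variation measure" is AC.

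For the variation bound, I would estimate $\var(\widetilde{M}f) = \sum_k |\,|f(b_k)| - |f(a_k)|\,| + \var(|f|; \text{complement})$. On the complement $\widetilde{M}f=|f|$ so its variation there is at most $\var(|f|)$ restricted to that set; on each interval $(a_k,b_k)$ the monotonicity gives $\var(\widetilde{M}f; (a_k,b_k)) = |\,|f(b_k)|-|f(a_k)|\,| \le \var(|f|;[a_k,b_k])$. Summing over the disjoint intervals and adding the complement contribution yields $\var(\widetilde{M}f) \le \var(|f|) \le \var(f)$, the last step because $|\,|f|'\,| = |f'|$ (or directly, $|a|-|b| \le |a-b|$ at the level of variation). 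Assembling absolute continuity from the piecewise description and the now-established finite variation completes the proof. The delicate points to get right are: (i) the perturbation lemma forcing monotonicity on $\{\widetilde{M}f > |f|\}$, (ii) continuity of $\widetilde{M}f$ including at the endpoints $a_k,b_k$, and (iii) the bookkeeping that the endpoint-value differences plus the complementary variation do not exceed $\var(f)$ — all of which are clean in dimension one precisely because intervals, unlike higher-dimensional balls, can be slid and their averages compared directly.
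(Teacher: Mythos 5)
Your overall skeleton (decompose along the detachment set $E=\{\widetilde{M}f>|f|\}$, rule out local maxima of $\widetilde{M}f$ there, then sum contributions over components) is the right one and is exactly the strategy of Aldaz and P\'erez L\'azaro, but two of your steps are wrong as written. First, the absence of local maxima of $\widetilde{M}f$ on a component $(a_k,b_k)$ of $E$ does \emph{not} give monotonicity: it gives only that $\widetilde{M}f$ is non-increasing up to some $c\in[a_k,b_k]$ and non-decreasing afterwards (a ``V-shape''). For $f=\one_{[-2,-1]}+\one_{[1,2]}$ one computes $\widetilde{M}f(x)=1/(2-|x|)$ on the component containing $(-1,1)$, which is not monotone. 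Consequently your identity $\var\big(\widetilde{M}f;(a_k,b_k)\big)=\big|\,|f(b_k)|-|f(a_k)|\,\big|$ fails; the correct count is $|f(a_k)|+|f(b_k)|-2\min_{[a_k,b_k]}\widetilde{M}f$, and the desired bound by $\var(|f|;[a_k,b_k])$ is recovered only after the additional observation that $\widetilde{M}f\geq |f|$ forces $\min_{[a_k,b_k]}\widetilde{M}f$ to dominate the (essential) infimum of $|f|$ on the component, so that going down to that infimum and back up already costs $|f|$ at least as much variation. This part of your argument is repairable, but the bookkeeping must be redone.

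The second gap is more serious: ``monotone and continuous hence AC'' is false --- the Cantor function is continuous, monotone, of bounded variation, and not absolutely continuous. Absolute continuity is precisely the delicate half of the theorem and cannot be extracted from continuity plus finite total variation alone, nor from your fallback claim that a continuous BV function which is AC on a dense open set is AC (that is essentially a restatement of what must be proved, and is false without controlling the singular part). The actual route is to verify that $\widetilde{M}f$ satisfies Luzin's condition (N) (it maps null sets to null sets) in addition to being continuous and of bounded variation, and then to invoke the Banach--Zarecki theorem; this is the step the survey explicitly flags as the crux. Relatedly, the continuity of $\widetilde{M}f$ on the contact set and at the endpoints $a_k,b_k$ deserves a genuine argument, since $f$ itself may be discontinuous and $\widetilde{M}f$ is a priori only lower semicontinuous.
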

We comment on the two main features of this theorem. Firstly, the regularizing effect of the operator $\widetilde{M}$, that takes a mere function of bounded variation into an absolutely continuous function. The proof of this fact relies on the classical Banach-Zarecki theorem. This regularizing effect is not shared by the centered maximal operator $M$, as it can be seen by simply taking $f$ to be the characteristic function of an interval. In this sense, the uncentered operator is more regular than the centered one, and in many instances in this theory it is a more tractable object. Secondly, the inequality \eqref{Var_Sharp} with constant $C=1$ is sharp, as it can be seen again by taking $f$ to be the characteristic function of an interval. Note, in particular, that \eqref{Var_Sharp} indeed refines \eqref{Tanaka_boy}, since any function $f \in W^{1,1}(\R)$ can be modified on a set of measure zero to become absolutely continuous. The core of this argument comes from the fact that the maximal function does not have points of local maxima in the set where it disconnects from the original function (sometimes referred to here as {\it detachment set}).

\smallskip

Proving an inequality of the same spirit as \eqref{Var_Sharp} for the one-dimensional centered Hardy-Littlewood maximal operator is a harder task. In this situation, there may be local maxima of $Mf$ in the detachment set (one may see this for instance by considering $f = \delta_{-1} + \delta_{1}$, where $\delta_{x_0}$ denotes the Dirac delta function at the point $x_0$; in this case the point $x=0$ is a local maximum for $Mf$; of course, technically one would have to smooth out this example to view the Dirac deltas as actual functions) and the previous argument of Aldaz and P\'{e}rez L\'{a}zaro cannot directly be adapted. In the work \cite{Ku}, O. Kurka proved the following remarkable result.
\begin{theorem}[Kurka, 2015 - cf. \cite{Ku}]\label{Thm_Kurka}
Let $f:\R \to \R$ be a function of bounded variation. Then 
\begin{equation}\label{Var_Sharp_centered}
\var(M f) \leq 240004 \, \var(f).
\end{equation}
\end{theorem}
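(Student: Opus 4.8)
The plan is to carry the programme of Aldaz and P\'{e}rez L\'{a}zaro as far as it will go, and then to confront directly the obstruction already flagged above: the centered maximal function $Mf$ can have local maxima inside the set where it detaches from $f$, so the one-line argument that yields \eqref{Var_Sharp} for $\widetilde M$ breaks down and must be replaced by a genuinely quantitative analysis.

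\textbf{Step 1: reduction to a nice test function.} I would first record that total variation is lower semicontinuous under $L^1_{\rm loc}$ convergence, that mollification does not increase variation, that $\|f*\rho_\varepsilon-f\|_{L^1}\to0$ forces $M(f*\rho_\varepsilon)\to Mf$ almost everywhere along a subsequence (using $|Mg-Mh|\le M(g-h)$), and that $Mf=M|f|$ with $\var(|f|)\le\var(f)$. Combining these with elementary truncations, it suffices to prove \eqref{Var_Sharp_centered} when $f\ge0$ is continuous, piecewise linear and compactly supported. For such $f$, the function $Mf$ is continuous and locally Lipschitz; the detachment set $D:=\{x:\,Mf(x)>f(x)\}$ is open with finitely many connected components; on $D$ the supremum defining $Mf(x)$ is attained at some radius $r(x)$ with $0<r(x)<\infty$; and, writing $A(x,r):=\frac{1}{2r}\int_{x-r}^{x+r}f(t)\,\dt$, one may choose $x\mapsto r(x)$ piecewise real-analytic with finitely many branches, while the first-order condition $\partial_r A(x,r(x))=0$ and the envelope principle give, at points of differentiability,
\[
(Mf)'(x)=\frac{f(x+r(x))-f(x-r(x))}{2\,r(x)},\qquad f(x-r(x))+f(x+r(x))=2\,Mf(x).
\]

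\textbf{Step 2: reduction to the detachment set.} Since $Mf=f$ on $\R\setminus D$ and the endpoints of the components of $D$ lie in $\R\setminus D$, the standard decomposition of the variation of a continuous function across a closed set gives $\var(Mf)\le\var(f)+\sum_{(a,b)\subset D}\var\big(Mf;(a,b)\big)$, and by the envelope formula $\var\big(Mf;(a,b)\big)=\int_a^b\frac{|f(x+r(x))-f(x-r(x))|}{2\,r(x)}\,\dx$. The entire problem is thereby reduced to bounding $\sum_{(a,b)\subset D}\var(Mf;(a,b))$ by a universal multiple of $\var(f)$.

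\textbf{Step 3: the core estimate, which is the hard part.} Here the crude bound $|f(x+r(x))-f(x-r(x))|\le\var\big(f;[x-r(x),x+r(x)]\big)$ is useless, because the optimal intervals $[x-r(x),x+r(x)]$ for nearby $x$ overlap heavily, may be enormous, and may reach portions of $f$ far from $(a,b)$. The way forward is to exploit the two displayed identities \emph{together}: the relation $f(x-r(x))+f(x+r(x))=2Mf(x)$ couples every rise of $Mf$ on $D$ to a rise, on average, of the boundary values $f(x\pm r(x))$, so that an oscillation of $Mf$ of a given size inside a detachment component must be manufactured by a comparable oscillation of $f$ along the moving endpoint curves $x\mapsto x-r(x)$ and $x\mapsto x+r(x)$. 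Making this rigorous requires: (i) controlling the finitely many interior local maxima and minima of $Mf$ on each component of $D$; (ii) following the trajectories of the left and right optimal endpoints, separating the regimes in which one endpoint is essentially frozen while the other sweeps an interval; and (iii) a charging argument that assigns to each significant oscillation of $Mf$ a region carrying a definite amount of $\var(f)$, with uniformly bounded overlap multiplicity of these regions across all components of $D$. This is precisely the step I expect to be by far the most delicate: in the uncentered setting there are simply no interior local maxima on the detachment set, whereas here $Mf$ may oscillate up and down arbitrarily many times within a single component, and none of the comparison or monotonicity shortcuts available to Aldaz and P\'{e}rez L\'{a}zaro apply. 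Adding the per-component bounds produced by (i)--(iii) to the trivial contribution from $\R\setminus D$ gives $\var(Mf)\le C\,\var(f)$; a careful accounting of the numerical constants generated in (i)--(iii) yields the explicit (and certainly non-optimal) value $C=240004$.
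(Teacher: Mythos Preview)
Your Steps 1 and 2 are sound and standard reductions, and the two displayed identities at the end of Step~1 are correct and are indeed the natural starting point for any attack on the problem. The gap is Step~3, which is the entire content of the theorem. What you have written there is not an argument but a list of \emph{desiderata}: control the interior local extrema, follow the endpoint trajectories $x\mapsto x\pm r(x)$, and run a charging scheme with bounded overlap. You flag this yourself as ``by far the most delicate'' step and then assert that ``a careful accounting of the numerical constants generated in (i)--(iii) yields the explicit value $C=240004$'', but nothing in (i)--(iii) has actually been carried out. In particular, no mechanism whatsoever is offered for the bounded-overlap claim in (iii); the heavy and uncontrolled overlap of the optimal intervals $[x-r(x),x+r(x)]$, both within a single component of $D$ and across different components, is precisely the obstruction, and restating it as a goal does not dissolve it. The trajectories $x\mapsto x\pm r(x)$ need not be monotone, can revisit the same portions of $\R$ many times, and the constraint $f(x-r(x))+f(x+r(x))=2Mf(x)$, while suggestive, does not by itself produce a disjointness or covering lemma.

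The paper does not reproduce Kurka's proof, but it does record its essential feature: ``a beautiful, yet rather intricate, argument of induction on scales (from which one arrives at the particular constant $C=240004$)''. Kurka does not track endpoint trajectories in the manner you propose; the organising principle, absent from your outline, is to stratify the oscillations of $Mf$ by \emph{scale} and to run an induction in which the contribution at each scale is controlled in terms of earlier scales together with a bounded portion of $\var(f)$. The number $240004$ is an artefact of the specific numerics of that induction, so your claim to recover exactly this constant from an unexecuted charging argument is not credible. As it stands, Step~3 is a statement of intent rather than a proof, and the key idea that makes Kurka's argument go through is missing.
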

\noindent The proof of this theorem relies on a beautiful, yet rather intricate, argument of induction on scales (from which one arrives at the particular constant $C = 240004$). Things seem to be tailor-made to the case of the Hardy-Littlewood maximal function, and it would be interesting to see if the argument can be adapted to treat other convolution kernels (discussed in the next section). The constant $C = 240004$ is certainly intriguing, but there seems to be no philosophical reason to justify this order of magnitude. This leaves the natural open question.

\medskip

\noindent {\bf Question 2.} {\it Let $f:\R \to \R$ be a function of bounded variation. Do we have
$$\var(M f) \leq  \var(f)\,?$$
Or at least, can one substantially improve on Kurka's constant $C = 240004$}\,?

\medskip

Despite the innocence of the statement of Question 2, the reader should not underestimate its difficulty. As a matter of fact, the reader is invited to think a little bit about this question to get acquainted with some of its obstacles. This is a beautiful example of an open question in this research topic. These usually have relatively simple statements and their solutions might only require ``elementary" tools, but the difficulty lies in {\it how to properly combine these tools.}

\smallskip

Recently, J. P. Ramos \cite{Ra} considered a hybrid version between $M$ and $\widetilde{M}$ in dimension $d=1$. For $\alpha \geq 0$, we may define the non-tangential maximal operator $M^{\alpha}$ by 
$$M^{\alpha}f(x) := \sup_{(y,t)\,:\, |x-y|\leq \alpha t}\, \frac{1}{2t} \int_{y-t}^{y+t} |f(s)|\,\ds.$$
In this setting, we notice that $M^{0} = M$ and $M^1 = \widetilde{M}$. Ramos shows that 
\begin{equation}\label{hier}
\var(M^{\alpha}f) \leq \var(M^{\beta}f)
\end{equation}
if $\alpha \geq \beta$, and from Theorem \ref{Thm_Kurka} one readily sees that 
$$\var(M^{\alpha}f) \leq C\, \var(f)$$
for all $\alpha \geq 0$. From Theorem \ref{Thm_APL} we may take $C=1$ if $\alpha \geq 1$. Ramos \cite[Theorem 1]{Ra} goes further and establishes the following result.

\begin{theorem}[Ramos, 2017 - cf. \cite{Ra}]\label{Thm_Ramos}
Let $\alpha \in [\frac{1}{3}, \infty)$ and let $f:\R \to \R$ be a function of bounded variation. Then 
\begin{equation}\label{Ineq_Ramos}
\var(M^{\alpha} f) \leq  \var(f).
\end{equation}
\end{theorem}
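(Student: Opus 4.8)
The plan is to prove the threshold result $\var(M^\alpha f)\le\var(f)$ for $\alpha\ge\tfrac13$ by combining two observations: first, that the monotonicity \eqref{hier} reduces the whole range $\alpha\in[\tfrac13,\infty)$ to the single endpoint $\alpha=\tfrac13$; and second, that at this endpoint one can run a variant of the Aldaz--P\'erez L\'azaro argument. So I would only need to establish \eqref{Ineq_Ramos} for $\alpha=\tfrac13$, and the heart of the matter is a careful analysis of the \emph{detachment set} $\{x: M^\alpha f(x) > f^*(x)\}$, where $f^*$ is the precise (e.g.\ upper semicontinuous) representative of $f$.

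First I would establish the basic structural facts: $M^\alpha f$ is lower semicontinuous, it is locally bounded away from the (at most countable) set where $f$ itself blows up, and on the detachment set $M^\alpha f$ is locally Lipschitz, hence the detachment set is open and decomposes into countably many maximal open intervals $(a_j,b_j)$. The key geometric input is that at a point $x$ in the detachment set the supremum defining $M^\alpha f(x)$ is attained (or nearly attained) by some admissible interval $[y-t,y+t]$ with $|x-y|\le\alpha t$, and because we are detached, the endpoint constraint $|x-y|=\alpha t$ must be \emph{active} — otherwise one could perturb $y$ and find that $x$ is really a local average, contradicting detachment. This is exactly where $\alpha=\tfrac13$ enters: the active constraint forces a rigid relation between the position of $x$ inside its detachment interval and the averaging interval, and a short computation (the analogue of the ``no local maxima in the detachment set'' lemma of Aldaz--P\'erez L\'azaro, suitably quantified for the non-tangential cone) shows that on each $(a_j,b_j)$ the function $M^\alpha f$ has no strict interior local maximum \emph{provided} $\alpha\ge\tfrac13$. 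Granting this, $M^\alpha f$ restricted to $[a_j,b_j]$ is monotone on each side of its minimum, so its variation over that interval equals $|M^\alpha f(a_j)-m_j|+|M^\alpha f(b_j)-m_j|$ where $m_j$ is the minimum value, and since $M^\alpha f(a_j)=f^*(a_j)$, $M^\alpha f(b_j)=f^*(b_j)$ by maximality of the interval, one bounds the variation of $M^\alpha f$ on the detachment set by the variation of $f^*$ on a comparable set. On the complement $M^\alpha f=f^*$ outright, and adding the two contributions and using that $f^*$ is a BV representative with $\var(f^*)\le\var(f)$ yields \eqref{Ineq_Ramos}.

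The main obstacle, and the place where real work is needed, is the quantitative ``no local maximum'' claim with the sharp constant $\alpha=\tfrac13$. One has to rule out the possibility that the averaging interval achieving $M^\alpha f$ at two nearby detached points $x_1<x_2$ sits on \emph{opposite} sides (one using a $y$ to the right, one to the left), which is precisely the mechanism that creates interior maxima for the centered operator $M^0$; the cone width $\alpha\ge\tfrac13$ is what makes the two candidate intervals overlap enough to force monotonicity. Making this rigorous requires tracking how the optimal averaging interval varies with $x$ — an envelope/continuity argument — and a case analysis according to whether the optimal interval lies to the left or right of $x$. I expect this comparison of left-optimal versus right-optimal averages, and extracting the exact numerical threshold $\tfrac13$ from it, to be the technically delicate step; the reduction via \eqref{hier} and the bookkeeping of variation over the detachment intervals are comparatively routine.
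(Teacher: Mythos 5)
Your overall architecture is the right one and matches what Ramos actually does: extend the Aldaz--P\'erez L\'azaro scheme by showing that $M^{\alpha}f$ has no local maxima in the set where it detaches from $f$, and then sum the variation over the detachment intervals. But as written your proposal is a plan rather than a proof: the one statement that carries all of the content of the theorem --- the quantitative ``no local maximum in the detachment set'' lemma, and in particular the computation that produces the threshold $\tfrac13$ --- is exactly the step you defer, describing it only as ``technically delicate.'' Everything else (lower semicontinuity of $M^{\alpha}f$, openness of the detachment set, monotonicity on either side of the minimum of each detachment interval, and the bookkeeping $\var(M^{\alpha}f)\le \var(f^{*})\le\var(f)$) is routine and already in \cite{AP}; the theorem \emph{is} that lemma. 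Moreover, your justification for why the cone constraint $|x-y|=\alpha t$ must be active is not correct as stated: if the optimal interval for a detached point $x$ has slack in the constraint, this does not contradict detachment --- what it gives is that nearby points may use the same admissible interval, so $x$ is a local \emph{minimum} of $M^{\alpha}f$, which is the easy half of the argument. The hard case is precisely the one where the constraint is active, and that is where the comparison of left- and right-anchored intervals and the number $\tfrac13$ must be extracted; none of that is carried out.

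There is also a structural warning. You propose to use the monotonicity \eqref{hier} to reduce the whole range to the single value $\alpha=\tfrac13$ and then prove the key lemma there directly. The reduction is logically valid, but it concentrates all the difficulty at the endpoint, which is exactly where a perturbation argument degenerates: strict inequalities become equalities, and already the example $f=\delta_{-1}+\delta_{1}$ produces a flat plateau of $M^{1/3}f$ on $[-\tfrac13,\tfrac13]$, so any ``no local maximum'' statement at $\alpha=\tfrac13$ must be formulated to tolerate such plateaus. Ramos's route is the opposite of yours: he establishes the no-local-maxima property, and hence \eqref{Ineq_Ramos}, for each $\alpha>\tfrac13$ separately (so \eqref{hier} is not needed at all), and then obtains the case $\alpha=\tfrac13$ by a limiting argument, using that $M^{\alpha}f\to M^{1/3}f$ pointwise as $\alpha\downarrow\tfrac13$ together with the lower semicontinuity of the total variation under pointwise convergence. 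I would strongly recommend that structure; attacking $\alpha=\tfrac13$ head-on is likely to fail, or at least to require a substantially more delicate formulation of the lemma than the one you sketch.
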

\noindent The constant $C=1$ in inequality \eqref{Ineq_Ramos} is sharp as it can be easily seen by taking $f$ to be the characteristic function of an interval. The proof of Ramos for Theorem \ref{Thm_Ramos} extends the argument of Aldaz and P\'{e}rez L\'{a}zaro \cite{AP}, in particular establishing the crucial property that $M^{\alpha} f$ has no local maxima in the detachment set for $\alpha >1/3$. The case $\alpha =1/3$ in \eqref{Ineq_Ramos} is obtained by a limiting argument. The interesting thing here is that $\alpha =1/3$ is the threshold for this property. Indeed, if $\alpha <1/3$, by taking $f =\delta_{-1} + \delta_1$ we see that $x=0$ is a local maximum of $M^{\alpha} f$, see \cite[Theorem 2]{Ra} (again, one must smooth out this example, since the Dirac deltas are actually singular measures and not exactly functions of bounded variation -- but this can be done with no harm). We conclude the one-dimensional discussion with the following question (which, by \eqref{hier}, would follow from an affirmative answer to Question 2).

\medskip

\noindent {\bf Question 3.} {\it Let $f:\R \to \R$ be a function of bounded variation. Do we have
\begin{equation}\label{Eq_alpha_var}
\var(M^{\alpha} f) \leq  \var(f)
\end{equation}
for $0 \leq \alpha < \frac{1}{3}$\,{\rm ?} Alternatively, what is the smallest value of $\alpha$ for which \eqref{Eq_alpha_var} holds}\,?

\subsection{Multidimensional results} Question 1 remains open, in general, for dimensions $d \geq 2$. There have been a few particular works that made interesting partial progress and we now comment on three of them, namely \cite{HM, Lu3, Saa}.

\smallskip

In the paper \cite{HM}, Haj\l asz and Mal\'{y} consider a slightly weaker notion of differentiability. A function $f:\R^d \to \R$ is said to be  {\it approximately differentiable} at the point $x_0 \in \R^d$ if there exists a vector $L = (L_1, L_2, \ldots, L_d)$ such that for any $\varepsilon >0$ the set 
\begin{equation*}
A_{\varepsilon} := \left\{ x \in \R^d\,:\, \frac{|f(x) - f(x_0) - L(x-x_0)|}{|x-x_0|} < \varepsilon \right\}
\end{equation*}
has $x_0$ as a density point. If this is the case, the vector $L$ is unique determined and it is called the {\it approximate differential} of $f$ at $x_0$. This is a weaker notion than that of classical differentiability or weak differentiability. In fact, if a function $f$ is differentiable at a point $x_0$ then it is approximately differentiable at $x_0$ and $L = \nabla f(x_0)$, and similarly, if $f$ is weakly differentiable then it is approximately differentiable and its approximate differential is equal to the weak derivative a.e., see for instance \cite[Section 6.1.3, Theorem 4]{EG}. The main result of \cite{HM} reads as follows.
\begin{theorem}[Haj\l asz and Mal\'{y}, 2010 - cf. \cite{HM}]
If $f \in L^1(\R^d)$ is approximately differentiable a.e. then the maximal function $Mf$ is approximately differentiable a.e.
\end{theorem}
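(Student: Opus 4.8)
The plan is to bootstrap Theorem~\ref{Thm_Kin} --- in particular its $p=\infty$ case, which handles Lipschitz data --- to the general situation by means of a Lusin-type decomposition of $f$ into Lipschitz pieces. First come the routine reductions: replacing $f$ by $|f|$, which is still approximately differentiable a.e., we may assume $f\ge0$, and we recall that $Mf\in L^{1,\infty}(\R^d)$, so that $Mf<\infty$ a.e. By the classical structure theorem for approximately differentiable functions (see e.g.\ \cite{EG}), after discarding a set of measure zero we may write $\R^d=\bigcup_{k\ge1}E_k$ with each $E_k$ closed and $f$ coinciding on $E_k$ with a bounded Lipschitz function $g_k\colon\R^d\to\R$ (boundedness is arranged by first intersecting the pieces with a fixed grid of cubes, which guarantees $Mg_k<\infty$). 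Since almost every point is a density point of some $E_k$, and since we may additionally discard the null sets where $f$ has no Lebesgue point, or where some $g_k$, or some $Mg_k$ --- a Lipschitz function, by the $p=\infty$ case of Theorem~\ref{Thm_Kin} --- fails to be differentiable, it suffices to show that $Mf$ is approximately differentiable at every such ``good'' density point $x_0$ of a fixed piece $E:=E_k$.

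Fix a good $x_0$, write $g:=g_k$ and $\lambda_0:=Mf(x_0)$, and note that $\lim_{r\to0}(\varphi_r*f)(x_0)=f(x_0)=g(x_0)$. I would then argue by a dichotomy. In the \emph{detachment case} $\lambda_0>g(x_0)$: since $(\varphi_r*f)(x_0)\le r^{-d}\|f\|_{L^1(\R^d)}\to0$ as $r\to\infty$, there exist $0<\delta<R<\infty$ with $(\varphi_r*f)(x_0)<\lambda_0$ for all $r\notin[\delta,R]$. The superlevel sets of $x\mapsto\sup_{r\notin[\delta,R]}(\varphi_r*f)(x)$ are open and omit $x_0$, so by the Lebesgue density theorem --- applied over a countable family of rational parameters --- they have density $0$ at $x_0$ for a.e.\ $x_0$; hence $Mf$ agrees, on a set of density $1$ at $x_0$, with the truncated maximal function $N_{\delta,R}f:=\sup_{\delta\le r\le R}(\varphi_r*f)$. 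As the averaging radii are now bounded away from $0$ and $\infty$, the averages $\varphi_r*f$ ($\delta\le r\le R$) are uniformly of bounded variation and $N_{\delta,R}f$ is much tamer than $Mf$; a separate, easier argument shows that it is approximately differentiable a.e., which settles this case.

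The \emph{contact case} $\lambda_0=g(x_0)$ is the heart of the matter, and the place where the Lipschitz piece is genuinely used. The lower bound is easy: from $Mf\ge\varphi_r*f\to f=g$ on $E$ one gets $Mf(x)\ge g(x)$ for a.e.\ $x\in E$, hence, using differentiability of $g$ at $x_0$,
\begin{equation*}
Mf(x)-Mf(x_0)\ \ge\ \nabla g(x_0)\cdot(x-x_0)+o(|x-x_0|)\qquad\text{along }x\in E.
\end{equation*}
For the matching upper bound on a set of density $1$ at $x_0$ one compares $Mf$ with (a localization near $x_0$ of) $Mg$, which is Lipschitz by Theorem~\ref{Thm_Kin}, hence differentiable at $x_0$: on a density-$1$ neighbourhood of $x_0$ the balls entering the supremum are mostly contained in $E$ --- since $E^c$ has density $0$ at $x_0$ and $f$ is approximately continuous there --- so that $\varphi_\rho*f$ is well approximated by $\varphi_\rho*g$, and the contribution of $Mg$ is $\nabla g(x_0)\cdot(x-x_0)+o(|x-x_0|)$. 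Combining with the lower bound along $E$ (still a set of density $1$ at $x_0$) then shows that $Mf$ is approximately differentiable at $x_0$, with approximate differential $\nabla g(x_0)$.

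The main obstacle is exactly this comparison, i.e.\ controlling $Mf-Mg$ --- equivalently $M\big((f-g)\,\one_{E^c}\big)$ --- near the density point $x_0$. Because $f$ is merely integrable (so that, although $f$ is approximately continuous at $x_0$, its restriction to the density-zero set $E^c$ may well be unbounded near $x_0$), the crude bound $\sup_\rho\big|\varphi_\rho*\big((f-g)\,\one_{E^c}\big)(x)\big|\le|B(0,\delta)|^{-1}\,\|(f-g)\,\one_{E^c}\|_{L^1(\R^d)}$ only yields a fixed error, which is useless for a first-order expansion. One must instead carry out a careful scale-by-scale, covering/stopping-time analysis, showing that on a set of density $1$ at $x_0$ the mass of $f$ sitting on $E^c$ contributes only $o(|x-x_0|)$ to the averages at the relevant scales; arranging this so that it interlocks correctly with the Lipschitz decomposition and with the small-versus-large-scale split is the technical core of the argument.
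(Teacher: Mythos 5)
The survey states this theorem without proof, so your proposal can only be measured against the original argument in \cite{HM} and on its own merits. Your detachment case is essentially sound and close in spirit to Haj\l asz--Mal\'y: at a.e.\ point $x_0$ with $Mf(x_0)>f(x_0)$ the near-maximizing radii are bounded below by some rational $\delta>0$, and your rational-parameter density argument correctly shows that $Mf$ agrees, on a set of density one at $x_0$, with a truncated maximal function. Two simplifications: the upper truncation $R$ is unnecessary, since $\sup_{r\ge\delta}(\varphi_r*|f|)$ is already globally Lipschitz with constant $C(d)\,\delta^{-d-1}\|f\|_{L^1(\R^d)}$ (compare the ball of radius $r$ at $x$ with the ball of radius $r+|x-y|$ at $y$); and no ``separate, easier argument'' is needed for the truncated operator --- it is Lipschitz, hence differentiable a.e.\ by Rademacher. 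Note that this half of the proof uses nothing about $f$ beyond integrability.

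The genuine gap is the upper bound in the contact case, and the route you sketch for it cannot be completed. Being a Lebesgue point of $f$ and a density point of $E$ only yields $\int_{B(x_0,\rho)}|f-g|\,\one_{E^c}\,\dy=o(\rho^d)$, i.e.\ the perturbation of the averages is $o(1)$, not the $o(|x-x_0|)$, uniform over scales, that a first-order expansion demands. Indeed, in $d=1$ take $h=|f-g|\one_{E^c}$ to consist of spikes of mass $4^{-j}$ on intervals of length $8^{-j}$ near the points $2^{-j}$: then $E^c$ has density zero at $0$, $0$ is a Lebesgue point of $h$, yet $Mh(x)\geq c\,|x|$ for \emph{every} $x\in(0,1)$, so no stopping-time refinement can make the comparison $Mf\le Mg+Mh$ accurate to order $o(|x-x_0|)$ on a density-one set. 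The repair is to notice that no upper bound is needed at all. Approximate differentiability at $x_0$ only constrains $Mf$ on \emph{some} set of density one at $x_0$; a.e.\ point of the contact set $C=\{Mf=f\}$ is a density point of $C$; and on $C$ one has $Mf=f$ with $Mf(x_0)=f(x_0)$, so the defining set for $Mf$ contains $C\cap A_\varepsilon$, where $A_\varepsilon$ is the defining set for the approximate differentiability of $f$ at $x_0$. Thus $Mf$ inherits the approximate differential of $f$ verbatim, and the values of $Mf$ on the density-zero set where it detaches are irrelevant. This observation dissolves the hard half of your dichotomy and also makes the Lusin decomposition into Lipschitz pieces superfluous (your split $\lambda_0>g(x_0)$ versus $\lambda_0=g(x_0)$ is just the detachment/contact dichotomy for $f$ itself, since $g=f$ on $E$): the hypothesis on $f$ enters only, and directly, through the contact set.
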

The recent interesting work of Luiro \cite{Lu3} answers Question 1 affirmatively in the case of the uncentered maximal function $\widetilde M$ and {\it restricted to radial functions $f$}. 
\begin{theorem}[Luiro, 2017 - cf. \cite{Lu3}]\label{Luiro2017}
If $f \in W^{1,1}(\R^d)$ is radial, then $\widetilde{M}f$ is weakly differentiable and 
\begin{equation*}
\big\|\nabla \widetilde{M}f\big\|_{L^1(\R^d)} \leq C\, \|\nabla f\|_{L^1(\R^d)},
\end{equation*}
where $C = C(d)$ is a universal constant.
\end{theorem}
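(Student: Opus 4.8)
The plan is to use radial symmetry to collapse the problem to a one--dimensional weighted inequality, and then to run the Aldaz--P\'erez L\'azaro argument on the detachment set while keeping track of the Jacobian weight $\rho^{d-1}$ and of the behaviour near the origin.

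\smallskip

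\emph{Step 1 (reduction to dimension one).} Since $|f|$ and $\one_{B(0,r)}$ are radial, so is $|f|\ast\one_{B(0,r)}$, and therefore
$$\widetilde{M}f(x)=\sup_{r>0}\ \sup_{|y-x|\le r}\frac{(|f|\ast\one_{B(0,r)})(y)}{m(B(0,r))}$$
depends only on $|x|$: for each admissible value $s$ of $|y|$ one may take $y=s\,x/|x|$, a centre on the ray joining the origin to $x$. Hence $\widetilde{M}f$ is radial. Write $u(\rho):=\widetilde{M}f(\rho e)$ for a fixed unit vector $e$ and let $f_0$ be the radial profile of $f$; one checks that $u$ is locally absolutely continuous on $(0,\infty)$ (comparing $u(\rho)$ and $u(\rho')$ through a slightly enlarged common ball, or invoking the continuity theory for $\widetilde{M}$), which also yields the asserted weak differentiability of $\widetilde{M}f$ on $\R^d$. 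The theorem then reduces, up to a dimensional constant, to
$$\int_0^\infty |u'(\rho)|\,\rho^{d-1}\,\d\rho\ \le\ C(d)\int_0^\infty |f_0'(\rho)|\,\rho^{d-1}\,\d\rho .$$

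\smallskip

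\emph{Step 2 (contact set, detachment set, envelope property).} One always has $u\ge|f_0|$, so on the contact set $\{u=|f_0|\}$ the nonnegative function $u-|f_0|$ attains its minimum and hence $|u'|=|f_0'|$ a.e.\ there; this part contributes at most the right--hand side. The detachment set $D=\{u>|f_0|\}$ is open, a countable union of intervals $(a_k,b_k)$ with $u=|f_0|$ at finite endpoints. For $\rho\in D$ one fixes an optimal ball $B(s(\rho)e,r(\rho))$ with $|s(\rho)-\rho|\le r(\rho)$ (the supremum is attained, after an approximation); the key structural fact is the \emph{envelope property}: because the average over a fixed ball does not depend on the base point, $u(\rho')\ge u(\rho)$ for every $\rho'$ with $|s(\rho)-\rho'|\le r(\rho)$. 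Moreover the sign of $u'(\rho)$ is governed by whether $\rho e$ lies on the near or the far part of the optimal sphere.

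\smallskip

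\emph{Step 3 (the detachment set).} The heart of the matter is the radial analogue of the Aldaz--P\'erez L\'azaro lemma: $u$ has no interior local maximum on $D$. If $\rho_0\in(a_k,b_k)$ were one, pick $\rho_n\to\rho_0$ with optimal balls $(s_n,r_n)$; the radii cannot degenerate ($r_n\to0$ would force $u(\rho_n)\to|f_0(\rho_0)|<u(\rho_0)$) nor blow up ($r_n\to\infty$ would force $u(\rho_n)\to0$), so a subsequential limit yields an optimal ball at $\rho_0$ containing $\rho_0 e$, and the envelope property forces $u\ge u(\rho_0)$ on a one--sided neighbourhood of $\rho_0$; pushing this plateau to its boundary and iterating (as in \cite{AP}) contradicts the local maximum, or exits $D$. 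Hence $u$ is $V$--shaped on each $(a_k,b_k)$: nonincreasing up to a point $c_k$, nondecreasing afterwards, so that
$$\int_{a_k}^{b_k}|u'(\rho)|\,\rho^{d-1}\,\d\rho\ \le\ b_k^{\,d-1}\big[(|f_0(a_k)|-u(c_k))+(|f_0(b_k)|-u(c_k))\big].$$
One then splits the components into those on which the optimal radius stays $\lesssim\rho$ --- where $\rho^{d-1}$ is comparable to a constant throughout $(a_k,b_k)$, the optimal balls avoid the origin, and the sum over $k$ telescopes against $\int|f_0'|\rho^{d-1}$ exactly as in one dimension --- and the remaining ``large ball'' components near the origin, where $m(B(s(\rho)e,r(\rho)))\gtrsim_d\rho^d$ yields a pointwise bound of the form $u(\rho)\lesssim_d\rho^{-d}\int_{|z|\lesssim\rho}|f|$ which, combined with $|f_0(t)|\le\int_t^\infty|f_0'|$ and a Hardy--type inequality, bounds their total contribution by $C(d)\int|f_0'|\rho^{d-1}$. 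Adding the contact--set estimate finishes the proof.

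\smallskip

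\emph{Main obstacle.} The delicate points are the no--local--maximum property, which here is genuinely $d$--dimensional (the competitor is a ball whose centre and radius range over a two--parameter family, so the one--dimensional lemma must be re-proved), and --- more seriously --- the bookkeeping needed to sum the per--component estimates against the weight $\rho^{d-1}$: the crude bound above is too lossy for long detachment intervals and for components accumulating at a contact point, and must be refined through a finer analysis of how the optimal balls vary. This refinement, together with the transition regime $r(\rho)\asymp\rho$ near the origin, is where the dimensional constant $C(d)$ is spent.
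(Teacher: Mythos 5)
The survey states this theorem without proof, citing Luiro \cite{Lu3}, so there is no in-paper argument to compare against; I am judging your sketch on its own terms. Your Step 1 is correct: since $|f|$ is radial the optimal centre may be taken on the ray through $x$, $\widetilde{M}f$ is radial, and the claim reduces (up to the constant $\omega_{d-1}$) to the weighted inequality $\int_0^\infty |u'(\rho)|\rho^{d-1}\,\d\rho \le C\int_0^\infty|f_0'(\rho)|\rho^{d-1}\,\d\rho$. The envelope property and the contact-set estimate are also fine, modulo the fact that local absolute continuity of $u$ is itself something to be proved rather than ``checked''.

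The two steps you defer to the ``main obstacle'' paragraph are, however, the entire content of the theorem, and your plan does not contain the ideas that resolve them. First, the no-local-maximum/V-shape lemma is genuinely open in your write-up: the trace of an optimal ball on the ray may be a one-sided interval about $\rho_0$ (this happens whenever $\rho_0$ lies on the boundary sphere of every optimal ball, which already occurs in one dimension), so the envelope property gives $u\ge u(\rho_0)$ only on one side, and the Aldaz--P\'erez L\'azaro iteration relies on sliding and enlarging one-dimensional intervals in a way that has no automatic analogue for $d$-dimensional balls meeting a fixed ray; ``iterating as in \cite{AP}'' does not discharge this. Second, and more seriously, even granting the V-shape, the per-component bound $\int_{a_k}^{b_k}|u'|\rho^{d-1}\,\d\rho\le b_k^{d-1}\bigl[(|f_0(a_k)|-u(c_k))+(|f_0(b_k)|-u(c_k))\bigr]$ loses a factor $(b_k/a_k)^{d-1}$ that is unbounded over the family of components, and your proposed dichotomy does not repair this: a component on which the optimal radii satisfy $r(\rho)\lesssim\rho$ can still have $b_k/a_k$ arbitrarily large, so $\rho^{d-1}$ is not ``comparable to a constant throughout $(a_k,b_k)$'' and the sum does not telescope against $\int|f_0'|\rho^{d-1}$. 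Announcing ``a finer analysis of how the optimal balls vary'' is a placeholder, not an argument. For reference, Luiro's actual proof takes a different route: it rests on the derivative formula $\nabla\widetilde{M}f(x)=\frac{1}{m(B_x)}\int_{B_x}\nabla f(y)\,\dy$ over a best ball $B_x$ (from the continuity theory of \cite{Lu1}) combined with a case analysis on the size of the best radius relative to $|x|$, rather than on a no-local-maximum lemma; so the parts of your outline that are solid are the routine ones, and the parts that are missing are precisely where a new mechanism is needed.
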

\noindent This raises a natural question, another interesting particular case of Question 1.

\medskip

\noindent {\bf Question 4.} {\it Does Theorem \ref{Luiro2017} hold for the centered maximal operator $M$ acting on radial functions $f$}\,?

\medskip

In \cite{Saa}, Saari studies the regularity of maximal operators via generalized Poincar\'{e} inequalities. An interesting corollary \cite[Corollary 4.1]{Saa} of the main result of this paper establishes that, if $f \in W^{1,1}(\R^d)$, then the distributional partial derivatives $\partial_i\widetilde{M}f$ (or $\partial_i Mf$)  can be represented as functions $h_i \in L^{1,\infty}(\R^d)$ when they act on smooth functions with compact support not meeting a certain singularity set.
 
\smallskip
 
Finally, let us briefly return to the spherical maximal operator $M_S$ defined in \eqref{Spherical_max}. Recall that we have shown in the previous section that $M_S$ is bounded in $W^{1,p}(\R^d)$ for $d \geq 2$ and $p > d/(d-1)$. We conclude this section with the following question, originally proposed in \cite[Question 2]{HO}.

\medskip

\noindent {\bf Question 5.} (Haj\l asz and Onninen, 2004 - cf. \cite{HO}) {\it Let $d\geq 2$. Is $M_S: W^{1,p}(\R^d) \to W^{1,p}(\R^d)$ bounded for $1 < p \leq d/(d-1)$}?

\medskip

\noindent Note that $p=1$ is not actually part of Question 5. In this case, the operator $f \mapsto |\nabla M_Sf|$ is not bounded from $W^{1,1}(\R^d)$ to $L^1(\R^d)$. A counterexample is given in \cite{HO}.

\section{Maximal operators of convolution type} 
Let $\varphi:\R^d \to \R$ be a nonnegative and integrable function with
$$\int_{\R^d} \varphi(x)\,\dx = 1.$$ 
As before, for $t >0$, we let $\varphi_t = t^{-d}\,\varphi(x/t)$. We define here the {\it maximal operator of convolution type} associated to $\varphi$ by
\begin{equation*}
M_{\varphi}f(x) = \sup_{t>0} \big(\varphi_{t}*|f|\big)(x).
\end{equation*}
Recall that the centered Hardy-Littlewood maximal operator arises when the kernel $\varphi$ is given by \eqref{def_varphi_ball}. When $\varphi$ admits a radial decreasing majorant in $L^1(\R^d)$, a classical result of Stein \cite[Chapter III, Theorem 2]{S} establishes that $M_{\varphi}: L^{p}(\R^d) \to L^{p}(\R^d)$ is a bounded operator for $1 < p \leq \infty$, and at $p=1$ we have a weak-$(1,1)$ estimate. Theorem \ref{A_operator} plainly implies that $M_{\varphi}: W^{1,p}(\R^d) \to W^{1,p}(\R^d)$ is bounded for $p>1$ (again, the case $p=\infty$ can be dealt with directly), and we may ask ourselves the same sort of questions as in the previous section, with respect to the regularity of this operator at the endpoint Sobolev space $W^{1,1}(\R^d)$. 

\smallskip

As it turns out, we may have some advantages in considering certain smooth kernels. This additional leverage may come, for instance, from partial differential equations naturally associated to the kernel $\varphi$. This is well exemplified in the work \cite{CS}, where two special kernels are considered: the Poisson kernel
\begin{equation}\label{Intro_Poisson}
\varphi(x) = \frac{\Gamma \left(\frac{d+1}{2}\right)}{\pi^{(d+1)/2}}\ \frac{1}{(|x|^2 + 1)^{(d+1)/2}}\,,
\end{equation}
and the Gauss kernel
\begin{equation}\label{Intro_Gauss}
\varphi(x)  = \frac{1}{(4 \pi)^{d/2}}\ e^{-|x|^2/4}.
\end{equation}
For the Poisson kernel \eqref{Intro_Poisson}, the function $u(x,t) = \varphi_t(x)$ solves Laplace's equation $u_{tt} + \Delta_xu =0$ on the upper half-space $(x,t) \in \R^d \times (0,\infty)$. For the Gauss kernel \eqref{Intro_Gauss}, the function $u(x,t) = \varphi_{\sqrt{t}}(x)$ solves the heat equation $u_{t} - \Delta_xu =0$ on the upper half-space $(x,t) \in \R^d \times (0,\infty)$. The qualitative properties of these two partial differential equations (namely the corresponding maximum principles and the semigroup property) can be used to establish a positive answer for the convolution type analogue of Question 2 in these cases \cite[Theorems 1 and 2]{CS}.
\begin{theorem}[Carneiro and Svaiter, 2013 - cf. \cite{CS}]\label{Carn_Sveiter}
Let $\varphi$ be given by \eqref{Intro_Poisson} or \eqref{Intro_Gauss}, and let $f:\R \to \R$ be a function of bounded variation. Then
\begin{equation*}
\var(M_{\varphi}f) \leq  \var(f).
\end{equation*}
\end{theorem}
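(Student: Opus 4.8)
The plan is to exploit the two structural features that distinguish these smooth kernels from the rough ball kernel: the semigroup property and the maximum principle. Write $u(x,t)$ for the harmonic (resp. caloric) extension of $|f|$, so that $u(x,t) = (\varphi_t * |f|)(x)$ with the parametrization as in the excerpt, and set $M_\varphi f(x) = \sup_{t>0} u(x,t)$. The key preliminary observation is that, because $u$ solves Laplace's equation (resp. the heat equation) on the upper half-space and $|f|$ is of bounded variation, the function $x \mapsto u(x,t)$ is smooth for each fixed $t>0$, and one has the a priori bound $\var\big(u(\cdot,t)\big) = \|\partial_x u(\cdot,t)\|_{L^1(\R)} \leq \var(f)$ for every $t>0$; this is just the statement that convolution with a probability density does not increase the total variation, applied to $\varphi_t$. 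So each individual ``layer'' already satisfies the desired inequality, and the whole difficulty is that taking a supremum over $t$ could in principle create new oscillation.

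First I would reduce matters to showing that $M_\varphi f$ has no strict local maximum in its detachment set $E = \{x : M_\varphi f(x) > |f|(x)\}$ — more precisely, that on each connected component of $E$ the function $M_\varphi f$ is monotone, or at least has no interior local maximum. This is exactly the mechanism behind the Aldaz--P\'erez L\'azaro argument for $\widetilde M$, and once it is in place the variation bound follows by a now-standard bookkeeping argument: one partitions $\R$ into the closure of $E$ and its complement, uses $M_\varphi f = |f|$ off $E$, uses the no-local-max property to control the variation contributed by $E$ by the boundary values, and assembles the pieces. So the real content is a statement purely about the extension $u$: if $x_0 \in E$, meaning the supremum defining $M_\varphi f(x_0)$ is attained at some $t_0 > 0$ (attainment, rather than a limit as $t\to 0$ or $t\to\infty$, is what $x_0 \in E$ buys us, modulo checking the sup is actually achieved — here the PDE regularity and decay help), then $x_0$ cannot be an interior local maximum of $x \mapsto M_\varphi f(x)$.

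The heart of the proof, and the step I expect to be the main obstacle, is this no-local-maximum claim, and here is where the maximum principle and semigroup property enter. Suppose for contradiction that $x_0$ is a local maximum of $M_\varphi f$ with value attained at height $t_0$. The idea is that $u(x_0, t_0) = M_\varphi f(x_0) \geq M_\varphi f(x)$ for $x$ near $x_0$, and in particular $u(x_0,t_0) \geq u(x,t_0)$ for $x$ near $x_0$, so $(x_0,t_0)$ is a local max of $u$ along the slice $t = t_0$; combined with the fact that $t \mapsto u(x_0,t)$ has a max at $t_0$, one gets that $(x_0,t_0)$ is a local max of $u$ on the whole half-space near that point. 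But $u$ is a solution of an elliptic (resp. parabolic) equation, and the strong maximum principle forbids an interior local maximum unless $u$ is constant on the relevant region — which it is not, since $u(\cdot,t) \to |f|$ in a suitable sense and $f$ is not a.e. constant (the constant case being trivial). Making this rigorous requires care: for the heat equation one must use the parabolic maximum principle correctly (a local max at $(x_0,t_0)$ controls $u$ only for times $\leq t_0$, which is precisely the direction one needs since we also know $\partial_t u(x_0,t_0) = 0$ from the one-variable maximality), and for both kernels one must handle the possibility that the supremum over $t$ is not attained at a finite positive $t_0$ — ruling out $t_0 = 0$ uses that $x_0 \in E$, and ruling out $t_0 = \infty$ uses the decay of $\varphi_t * |f|$ as $t \to \infty$ together with $f \in L^1$ being nonzero. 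The semigroup property $\varphi_s * \varphi_t = \varphi_{s+t}$ (resp. with the $\sqrt{t}$ reparametrization) is what guarantees that the extension is genuinely a solution with the right initial data and gives the needed smoothness and the comparison $u(\cdot, t_0 + h) = \varphi_h * u(\cdot, t_0)$, which lets one propagate information between heights. Once the no-local-maximum property is established, the passage to $\var(M_\varphi f) \leq \var(f)$ is routine and follows the template already recalled in the discussion of Theorem \ref{Thm_APL}.
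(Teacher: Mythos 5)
Your proposal follows essentially the same route as the Carneiro--Svaiter argument that this survey is summarizing: pass to the harmonic (resp.\ caloric) extension $u$, observe that a local maximum of $M_{\varphi}f$ at a point $x_0$ of the detachment set whose supremum is attained at some finite $t_0>0$ would make $(x_0,t_0)$ a maximum of $u$ over a full strip $(x_0-\delta,x_0+\delta)\times(0,\infty)$, invoke the elliptic (resp.\ parabolic) strong maximum principle to force $u$ to be constant and thus reach a contradiction outside the trivial case, and then run the Aldaz--P\'erez L\'azaro bookkeeping to convert the no-local-maximum property into $\var(M_{\varphi}f)\leq\var(f)$. One correction to the step you flagged as delicate: a function of bounded variation on $\R$ need not belong to $L^1(\R)$, so you cannot rule out the supremum being approached only as $t\to\infty$ by appealing to integrability and decay; instead one uses that $\varphi_t*|f|(x)$ converges as $t\to\infty$ to the constant $\big(|f|(-\infty)+|f|(+\infty)\big)/2$ \emph{independently of} $x$, so that a point where $M_{\varphi}f$ equals this limiting value is a global minimum of $M_{\varphi}f$ and therefore cannot be an offending local maximum.
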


\noindent {\sc Remark}: In \cite[Theorems 1 and 2]{CS} it is also proved that, for every dimension $d \geq 1$, if $f \in W^{1,2}(\R^d)$, then we have (for $\varphi$ given by \eqref{Intro_Poisson} or \eqref{Intro_Gauss})
\begin{equation}\label{CS_general}
\big\|\nabla M_{\varphi}f\big\|_{L^2(\R^d)} \leq \|\nabla f\|_{L^2(\R^d)}.
\end{equation}
Additionally, if $d=1$, an analogous inequality to \eqref{CS_general} holds on $L^p(\R)$ for all $p > 1$.

\smallskip

Theorem \ref{Carn_Sveiter} has been extended to a larger family of kernels in the work \cite{CFS}. The general version of the result in Theorem \ref{Carn_Sveiter} is the theme of the following question.

\medskip

\noindent {\bf Question 6.} Let $\varphi$ be a convolution kernel as described above, and let $f:\R \to \R$ be a function of bounded variation. Can we show that 
\begin{equation}\label{Conv_Ker_Q}
\var(M_{\varphi}f) \leq  C\, \var(f)
\end{equation}
with $C = C(\varphi)$\,? For which $\varphi$ can we actually show \eqref{Conv_Ker_Q} with $C=1$?

\section{Fractional maximal operators}\label{Sec5}
For $0 \leq \beta < d$ and $f \in L^1_{loc}(\R^d)$ we define the centered Hardy-Littlewood fractional maximal function $M_{\beta}f$ by
\begin{equation*}
M_{\beta}f(x) = \sup_{r >0} \frac{1}{m(B(x,r))^{1 - \frac{\beta}{d}}} \int_{B(x,r)} |f (y)|\,\dy.
\end{equation*}
When $\beta =0$ we plainly recover \eqref{HL_Max}. The uncentered fractional maximal function $\widetilde{M}_{\beta}f$ is defined analogously, with the supremum of the fractional averages being taken over balls that simply contain the point $x$, but are not necessarily centered at $x$. Such fractional maximal operators have connections to potential theory and partial differential equations. By comparison with an appropriate Riesz potential, one can show that if $1 < p < \infty$, $0 < \beta < d/p$ and $q = dp/(d-\beta p)$, then $M_{\beta}: L^p(\R^d) \to L^q(\R^d)$ is bounded. When $p=1$ we have again a weak-type bound (for details, see \cite[Chapter V, Theorem 1]{S}).

\smallskip

In \cite{KiSa}, Kinnunen and Saksman studied the regularity properties of such fractional maximal operators, arriving at the following interesting conclusions \cite[Theorems 2.1 and 3.1]{KiSa}.
\begin{theorem}[Kinnunen and Saksman, 2003 - cf. \cite{KiSa}]\label{KS_2003}
Let $1 < p <\infty$. 
\begin{itemize}
\item[(i)] For $0 \leq \beta < d/p$ and $q = dp/(d-\beta p)$ the operator $M_{\beta}: W^{1,p}(\R^d) \to W^{1,q}(\R^d)$ is bounded.
\item[(ii)] Assume that $1 \leq \beta < d/p$ and that $f \in L^p(\R^d)$. Then $M_{\beta}f$ is weakly differentiable and there exists a constant $C  = C(d,\beta)$ such that 
$$|\nabla M_{\beta}f(x)| \leq C \, M_{\beta-1}f(x)$$
for almost every $x \in \R^d$.
\end{itemize}
\end{theorem}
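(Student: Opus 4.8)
The plan is to handle the two parts in turn: (i) by a direct adaptation of the proof of Theorem~\ref{Thm_Kin}, and (ii) by setting up a quantitative Lipschitz-type estimate for $M_\beta f$ itself. Throughout, write $\omega_d = m(B(0,1))$ and, for $r>0$, abbreviate the fractional average
\[
A_r f(x)=\frac{1}{m(B(x,r))^{1-\beta/d}}\int_{B(x,r)}|f(y)|\,\dy=\omega_d^{\beta/d-1}\,r^{\beta-d}\int_{B(x,r)}|f(y)|\,\dy,
\]
so that $M_\beta f=\sup_{r>0}A_r f$. For part (i), fix $1\le i\le d$. As in the proof of Theorem~\ref{Thm_Kin}, $f\in W^{1,p}(\R^d)$ gives $|f|\in W^{1,p}(\R^d)$ with $|\partial_i|f||=|\partial_i f|$ a.e., and differentiating under the integral sign each $A_r f$ is weakly differentiable with $\partial_i A_r f(x)=\omega_d^{\beta/d-1}r^{\beta-d}\int_{B(x,r)}\partial_i|f|(y)\,\dy$, whence $|\partial_i A_r f(x)|\le A_r(\partial_i f)(x)\le M_\beta(\partial_i f)(x)$. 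Enumerate the positive rationals as $\{r_1,r_2,\dots\}$ and set $g_N=\max_{1\le j\le N}A_{r_j}f$. By the lattice property of Sobolev spaces (\cite[Theorem~6.18]{LL}), $g_N$ is weakly differentiable with $|\nabla g_N|\le M_\beta(\nabla f)$ a.e., while $0\le g_N\le M_\beta f$. Using the $L^p\to L^q$ boundedness of $M_\beta$ (recalled in Section~\ref{Sec5}), both $\|g_N\|_{L^q}\le\|M_\beta f\|_{L^q}$ and $\|\nabla g_N\|_{L^q}\le\sum_i\|M_\beta(\partial_i f)\|_{L^q}$ are finite and uniform in $N$, so $\{g_N\}$ is a bounded sequence in $W^{1,q}(\R^d)$, which is reflexive because $1<q<\infty$ (this is where $0\le\beta<d/p$ enters). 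Since $g_N\nearrow M_\beta f$ pointwise, the Mazur-lemma argument from the proof of Theorem~\ref{Thm_Kin} identifies the weak limit: $M_\beta f\in W^{1,q}(\R^d)$ with $\|M_\beta f\|_{W^{1,q}}\le C\|f\|_{W^{1,p}}$, and $|\partial_i M_\beta f(x)|\le M_\beta(\partial_i f)(x)$ survives a.e.

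For part (ii), only $f\in L^p(\R^d)$ is assumed. First I would note that for a.e.\ $x$ one has $A_r f(x)\to0$ as $r\to0^+$ (at Lebesgue points of $f$, using $\beta\ge1>0$) and as $r\to\infty$ (since $A_r f(x)\le C\|f\|_{L^p}r^{\beta-d/p}$ and $\beta<d/p$), so that if $M_\beta f(x)>0$ the supremum is attained at some finite $r=r_x>0$; the case $f\equiv0$ a.e.\ is trivial. The core estimate to be proved is: for a.e.\ $x,x'\in\R^d$,
\[
|M_\beta f(x)-M_\beta f(x')|\le(d-\beta)\,\omega_d^{1/d}\,|x-x'|\,\max\{M_{\beta-1}f(x),M_{\beta-1}f(x')\}.
\]
To establish it, suppose $M_\beta f(x)\ge M_\beta f(x')$, put $h=|x-x'|$ and $r=r_x$. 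Since $B(x,r)\subset B(x',r+h)$, monotonicity of the fractional average gives
\[
M_\beta f(x')\ge A_{r+h}f(x')\ge\Big(\tfrac{r}{r+h}\Big)^{d-\beta}A_r f(x)=\Big(\tfrac{r}{r+h}\Big)^{d-\beta}M_\beta f(x);
\]
using $1-(1+t)^{-(d-\beta)}\le(d-\beta)t$ for $t\ge0$, this yields $M_\beta f(x)-M_\beta f(x')\le(d-\beta)\tfrac{h}{r}M_\beta f(x)$, and finally $\tfrac{1}{r}M_\beta f(x)=\tfrac{1}{r}A_r f(x)=\omega_d^{1/d}\,m(B(x,r))^{(\beta-1)/d-1}\int_{B(x,r)}|f|\le\omega_d^{1/d}M_{\beta-1}f(x)$, the middle quantity being exactly one of the averages defining $M_{\beta-1}f(x)$. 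The reverse ordering of $M_\beta f(x),M_\beta f(x')$ gives the displayed symmetric bound.

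To conclude part (ii): since $0\le\beta-1<d/p$, the $L^p\to L^{q'}$ boundedness of $M_{\beta-1}$ (with $q'=dp/(d-(\beta-1)p)\in(1,\infty)$, again from Section~\ref{Sec5}) shows $M_{\beta-1}f\in L^{q'}(\R^d)$, in particular finite a.e., and $M_\beta f\in L^q(\R^d)\subset L^{q'}_{loc}(\R^d)$ since $q>q'$. The core estimate bounds the difference quotients of $M_\beta f$ in the direction $e_i$ by $C\big(M_{\beta-1}f(x)+M_{\beta-1}f(x+te_i)\big)$, which is bounded in $L^{q'}_{loc}(\R^d)$ uniformly in $t\ne0$; the difference-quotient characterization of Sobolev spaces (as in the proof of Theorem~\ref{A_operator}, \cite[Lemma~7.24]{GT}) then gives $M_\beta f\in W^{1,q'}_{loc}(\R^d)$, hence weakly differentiable, and letting $t\to0$ along a suitable sequence — difference quotients converge to the weak derivative in $L^{q'}_{loc}$, and translation is $L^{q'}$-continuous — produces $|\partial_i M_\beta f(x)|\le 2C\,M_{\beta-1}f(x)$ a.e., whence $|\nabla M_\beta f(x)|\le C'M_{\beta-1}f(x)$ a.e.

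I expect the main difficulty to be \emph{finding the right comparison} in part (ii). The naive attempt — differentiating a single average $A_r f$ via its surface-integral expression — fails, because a sphere average of an $L^p$ function at a fixed scale is not controlled pointwise by the solid averages entering $M_{\beta-1}f$. The resolution is to never differentiate an individual average, but rather to exploit the extremal scale $r_x$ of $M_\beta f$ and to enlarge the radius by exactly $|x-x'|$ when moving from $x$ to $x'$, so that the loss factor $1-(1+h/r_x)^{-(d-\beta)}$ turns precisely into $h\cdot M_\beta f(x)/r_x\sim h\cdot M_{\beta-1}f(x)$. A secondary technical nuisance is the bookkeeping around attainment of the supremum, handled by the $r\to0$ and $r\to\infty$ decay noted above (or, alternatively, by working with $\varepsilon$-almost-extremal radii throughout and letting $\varepsilon\to0$).
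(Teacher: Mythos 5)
Your argument is correct, and it is worth noting that the paper itself does not prove Theorem \ref{KS_2003}: it only cites \cite{KiSa} and remarks that part (i) ``can be proved by using the characterization of the Sobolev spaces via the difference quotients as in the proof of Theorem \ref{A_operator}.'' For part (i) you therefore take a genuinely different route from the one the survey suggests: instead of the Haj\l asz--Onninen translation/difference-quotient argument (which would give $\|(M_\beta f)_{te_i}-M_\beta f\|_{L^q}\le C\|f_{te_i}-f\|_{L^p}\le Ct\|\partial_i f\|_{L^p}$ in two lines, using only sublinearity, translation invariance and the $L^p\to L^q$ bound), you adapt Kinnunen's reflexivity/Mazur argument. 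Both work; yours costs a little more bookkeeping but buys the pointwise estimate $|\partial_i M_\beta f|\le M_\beta(\partial_i f)$ a.e., which the difference-quotient route does not directly provide. For part (ii), where the survey gives no proof at all, your Lipschitz-type comparison --- pass from $x$ to $x'$ by enlarging an (almost) extremal radius by $h=|x-x'|$, use $B(x,r)\subset B(x',r+h)$, the elementary bound $1-(1+t)^{-(d-\beta)}\le (d-\beta)t$, and the identity $r^{-1}A_rf(x)=\omega_d^{1/d}\,m(B(x,r))^{(\beta-1)/d-1}\int_{B(x,r)}|f|\le\omega_d^{1/d}M_{\beta-1}f(x)$ --- is exactly the mechanism of the original Kinnunen--Saksman proof, and your verification of attainment of the supremum (vanishing of $A_rf(x)$ as $r\to 0$ at Lebesgue points, since $\beta\ge 1>0$, and as $r\to\infty$, since $\beta<d/p$) together with the fallback to $\varepsilon$-almost-extremal radii correctly handles the only delicate point, namely that the symmetric estimate is needed for a.e.\ pair $(x,x+te_i)$. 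The conclusion via $M_{\beta-1}f\in L^{q'}(\R^d)$ with $q'=dp/(d-(\beta-1)p)>1$ and the local difference-quotient characterization is sound. The only cosmetic slip is the notation $A_r(\partial_i f)$ for the fractional average of $|\partial_i f|$; make explicit that absolute values are taken there, and the proof is complete.
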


\noindent Part (i) of Theorem \ref{KS_2003} extends the original result of Kinnunen (Theorem \ref{Thm_Kin}) to this fractional setting. One can prove it by using the characterization of the Sobolev spaces via the difference quotients as in the proof of Theorem \ref{A_operator}. Part (ii) of Theorem \ref{KS_2003} presents a beautiful regularization effect of this operator when the fractional parameter $\beta$ is greater than or equal to $1$.

\smallskip

In light of Theorem \ref{KS_2003}, it is then natural to ask ourselves what happens in the endpoint situation $p=1$ and $q = d/(d-\beta)$. Let us first consider the case $1 \leq \beta < d$. If $f \in W^{1,1}(\R^d)$, by the Sobolev embedding we have $f \in L^{p^*}(\R^d)$, where $p^* = d/(d-1)$, and hence $f \in L^{r}(\R^d)$ for any $1 \leq r \leq p^*$. We may choose $r$ with $1< r <d$ such that $1\leq \beta < d/r$. Using part (ii) of Theorem \ref{KS_2003} we have that $M_{\beta}$ is weakly differentiable and
\begin{equation*}
\left\| \nabla M_{\beta} f\right\|_{L^q(\R^d)} \leq C \left\| M_{\beta-1} f\right\|_{L^{q}(\R^d)} \leq C' \left\|  f\right\|_{L^{p^*}(\R^d)} \leq C'' \|\nabla f\|_{L^{1}(\R^d)}.
\end{equation*}
This shows that the map $f \to |\nabla M_{\beta} f|$ is bounded from $W^{1,1}(\R^d)$ to $L^q(\R^d)$ in this case. We are thus left with the following endpoint question, first posed in \cite{CMa}.

\medskip

\noindent {\bf Question 7.} {\it Let $0 \leq \beta < 1$ and $q = d/(d-\beta)$. Is the map $f \to |\nabla M_{\beta} f|$ bounded from $W^{1,1}(\R^d)$ to $L^q(\R^d)$\,$?$ Same question for the uncentered version $\widetilde{M}_{\beta}$.}

\medskip

A complete answer to Question 7 was achieved in dimension $d=1$ for the uncentered fractional maximal operator $\widetilde{M}_{\beta}$ in \cite[Theorem 1]{CMa}. To state this result we need to introduce a generalized version of the concept of variation of a function. For a function $f: \R \to \R$ and $1\leq q < \infty$, we define its $q$-variation as
\begin{equation}\label{Intro_q_var}
\var_q(f) := \sup_{\mc{P}} \left(\sum_{n=1}^{N-1} \frac{|f(x_{n+1}) - f(x_n)|^q}{|x_{n+1} - x_n|^{q-1} }\right)^{1/q},
\end{equation}
where the supremum is taken over all finite partitions $\mc{P} = \{x_1 < x_2 < \ldots < x_N\}$. This is also known as the {\it Riesz $q$-variation} of $f$ (see, for instance, the discussion in \cite{BaLi} for this object and its generalizations). Naturally, when $q=1$, this is the usual total variation of the function. A classical result of F. Riesz (see \cite[Chapter IX \S4, Theorem 7]{N}) states that, if $1 < q < \infty$, then $\var_q(f) < \infty$ if and only if $f$ is absolutely continuous and its derivative $f'$ belongs to $L^q(\R)$. Moreover, in this case, we have that
\begin{equation*}
\|f'\|_{L^q(\R)} = \var_q(f).
\end{equation*}

In \cite[Theorem 1]{CMa} the author and J. Madrid proved the following regularizing effect.

\begin{theorem}[Carneiro and Madrid, 2017 - cf. \cite{CMa}]\label{Thm5.2}
Let $0\leq \beta < 1$ and $q = 1/(1-\beta)$. Let $f:\R \to \R$ be a function of bounded variation such that $\wt{M}_{\beta}f \not\equiv \infty$. Then $\wt{M}_{\beta}f$ is absolutely continuous and its derivative satisfies
\begin{equation}\label{Intro_q_bound1}
\big\|\big(\wt{M}_{\beta}f\big)'\big\|_{L^q(\R)} = \var_q\big(\wt{M}_{\beta}f\big) \leq 8^{1/q}\,\var(f).
\end{equation}
\end{theorem}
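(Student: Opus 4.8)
The plan is to adapt the method of Aldaz and P\'{e}rez L\'{a}zaro behind Theorem~\ref{Thm_APL}, pushing it through with the fractional parameter $\beta$ and the $q$-variation. Since $\beta=0$ forces $q=1$, where the asserted constant $8$ is weaker than the sharp constant $1$ of Theorem~\ref{Thm_APL}, that case is already covered, and one may assume $0<\beta<1$ from now on.

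First I would establish the qualitative picture. Replacing $f$ by $|f|$ leaves $\widetilde{M}_\beta f$ unchanged and does not increase the total variation, so we may take $f\ge 0$ and fix the precise BV representative (which is then bounded). Write $u:=\widetilde{M}_\beta f$. It is lower semicontinuous, and finiteness of $u$ propagates: if $u$ were $+\infty$ at one point, a nearly maximizing interval there could be enlarged to reach any prescribed point while its length changes only by a bounded factor, so $u$ would be $+\infty$ everywhere --- here $0<\beta<1$ makes the weight $(b-a)^{\beta-1}$ decreasing and stable under such enlargements. Together with the hypothesis $\widetilde{M}_\beta f\not\equiv\infty$ this yields $u<\infty$ on all of $\R$. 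Moreover, for fixed $x$ the functional $(a,b)\mapsto (b-a)^{\beta-1}\int_a^b f$ on $\{a\le x\le b\}$ is continuous and tends to $0$ both as $b-a\to 0^{+}$ (using boundedness of $f$) and as $b-a\to\infty$ (using $\beta<1$), so either $u(x)=0$ or the supremum is attained at some nondegenerate bounded interval $I_x=[a_x,b_x]$; transporting near-optimal intervals to nearby points then upgrades lower semicontinuity of $u$ to continuity.

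The core of the proof, and the main new difficulty compared with the Hardy--Littlewood case, is the geometric structure. Because $\beta>0$ we no longer have $\widetilde{M}_\beta f\ge f$, so the ``detachment set'' has to be defined carefully: roughly, $D$ is the open set of points $x$ that admit a nondegenerate maximizing interval $I_x$ with $x$ in its interior, while off $D$ every maximizing interval is one-sided or degenerate, and that one-sidedness lets one control the variation of $u$ there directly by that of $f$. The key lemma is that $u$ has no local maxima in $D$: if $x_0\in D$ then every $y$ close to $x_0$ still lies in $I_{x_0}$, hence $u(y)\ge (b_{x_0}-a_{x_0})^{\beta-1}\int_{I_{x_0}}f=u(x_0)$, which excludes a strict local maximum, and a short refinement using optimality of $I_{x_0}$ excludes the non-strict case as well. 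Consequently, on each connected component $(a,b)$ of $D$ the continuous function $u$ is unimodal --- nonincreasing and then nondecreasing, with a single (possibly degenerate) interior minimum --- and its values at $a$ and $b$ are governed by $f$.

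It then remains to do the bookkeeping. Given a finite partition $x_1<\dots<x_N$, I would estimate $\sum_n |u(x_{n+1})-u(x_n)|^q/|x_{n+1}-x_n|^{q-1}$ by grouping consecutive nodes according to whether the interval they span sits inside a component of $D$ or meets its complement. On the complement the $u$-increments are dominated by $f$-increments; inside a component, unimodality splits the contribution into two monotone branches (a factor $2$), each branch is compared with $f$ near the relevant endpoint (a factor $2$ for the two endpoints), and one converts between $\var$ and $\var_q$ with H\"older and the power-mean inequality (the remaining factor). Summing over components gives $\var_q(u)^q\le 8\,\var(f)^q$. Finally, since $q>1$, the theorem of F.~Riesz quoted above turns the finiteness of $\var_q(u)$ into the conclusion that $u$ is absolutely continuous with $u'\in L^q(\R)$ and $\|u'\|_{L^q(\R)}=\var_q(u)$, which is \eqref{Intro_q_bound1}. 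I expect the main obstacle to be precisely the geometric step: pinning down the right notion of the detachment set and showing that $u$ is $f$-controlled off it without the crutch $\widetilde{M}_\beta f\ge f$ --- in particular handling points that lie on the boundary, not in the interior, of every maximizing interval --- together with organizing the $q$-variation count so that it closes with a clean universal constant.
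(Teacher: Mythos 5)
Your proposal is built on adapting the Aldaz--P\'erez L\'azaro scheme (detachment set, no local maxima, unimodality on components of the detachment set), but this is precisely the route that does not go through here, and it is not the route of \cite{CMa}: as remarked right after the statement of Theorem \ref{Thm5.2}, since one no longer has $\wt{M}_{\beta}f \geq |f|$, the detachment set is ill-posed as the place where ``$u=f$ controls everything,'' and the actual proof instead runs a bootstrapping procedure that bounds the $q$-variation of $\wt{M}_{\beta}f$ on certain intervals by the variation of $f$ on larger but comparable intervals. Two concrete gaps in your plan. First, off your set $D$ (points $x$ all of whose maximizing intervals have $x$ as an endpoint) you assert that ``one-sidedness lets one control the variation of $u$ there directly by that of $f$''; but there $u(x)$ is a one-sided fractional average $(b-x)^{\beta-1}\int_x^b f$ over an interval of uncontrolled length, with no identity or pointwise comparison between $u$ and $f$, so this half of the argument --- which is trivial in the classical case because $Mf=f$ on the complement of the detachment set --- is just as hard as the rest and is left unproved.

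Second, and more fatally, the bookkeeping inside a component of $D$ cannot close as described. For $q>1$ the Riesz $q$-variation of a monotone (hence of a unimodal) function is not controlled by its endpoint oscillation: by Radon's inequality $\sum_n a_n^q/b_n^{q-1} \geq \big(\sum_n a_n\big)^q/\big(\sum_n b_n\big)^{q-1}$, refining a partition only increases the sum in \eqref{Intro_q_var}, and a monotone function with a jump has infinite $q$-variation. So ``two monotone branches plus H\"older and the power-mean inequality'' yields nothing --- the inequality you would need points the wrong way. What is actually required is a quantitative local increment bound of the shape $|u(y)-u(x)| \lesssim |y-x|^{\beta}\,\var(f; J)$ for a suitable interval $J$ comparable to the maximizing intervals at $x$ and $y$, together with a summation scheme that controls the overlap of the intervals $J$; this is exactly the content of the bootstrapping lemma in \cite{CMa} and it is absent from your outline. (A smaller issue: a function of bounded variation need not be integrable, so $(b-a)^{\beta-1}\int_a^b f$ need not tend to $0$ as $b-a\to\infty$, and the existence of maximizing intervals also requires justification.) The reduction of the case $\beta=0$ to Theorem \ref{Thm_APL} and the preliminary continuity/finiteness discussion are fine.
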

The constant $C = 8^{1/q}$ in \eqref{Intro_q_bound1} arises naturally with the methods employed in \cite{CMa} and it is not necessarily sharp (in fact, we have seen that, when $\beta = 0$ and $q=1$, this inequality holds with constant $C=1$). The problem of finding the sharp constant in this inequality is certainly an interesting one. The strategy of \cite{CMa} to prove Theorem \ref{Thm5.2} in the pure fractional case $\beta >0$  is very different from that of the proof of Theorem \ref{Thm_APL}. While in the proof of Theorem \ref{Thm_APL} the essential idea is to prove that  the maximal function does not have any local maxima in the set where it disconnects from the original function, in the fractional case $\beta >0$, the mere notion of the disconnecting set is ill-posed, since one does not necessarily have $\widetilde{M}_{\beta}(f) (x) \geq |f(x)|$ a.e. anymore. To overcome this challenge, the author and Madrid in \cite{CMa} adopt a suitable bootstrapping procedure to bound the $q$-variation of $\wt{M}_{\beta}f$ on certain intervals by the variation of $f$ in larger (but still somewhat comparable) intervals.

\smallskip

In the higher dimensional case, partial progress on Question 7 was obtained by Luiro and Madrid in the recent work \cite{LuMa}. They considered the uncentered fractional maximal operator $\widetilde{M}_{\beta}$ {\it acting on radial functions}. The following result is therefore the fractional analogue of Theorem \ref{Luiro2017}.

\begin{theorem}[Luiro and Madrid, 2017 - cf. \cite{LuMa}]\label{LuMa_Th5.3}
Given $0<\beta <1$ and $q = d/(d-\beta)$, there is a constant $C = C(d,\beta)$ such that for every radial function $f \in W^{1,1}(\R^d)$ we have that $\widetilde{M}_{\beta}f$ is weakly differentiable and 
$$\big\|\nabla \widetilde{M}_{\beta} f\big\|_{L^q(\R^d)} \leq C \,\|\nabla f\|_{L^1(\R^d)}.$$
\end{theorem}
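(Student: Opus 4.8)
The plan is to use the radial symmetry to reduce the $d$-dimensional estimate to a one-dimensional weighted one, and then to invoke the circle of ideas behind Theorem~\ref{Thm5.2}. First I would reduce to $f \geq 0$, since $\widetilde{M}_\beta f = \widetilde{M}_\beta |f|$ and $|\nabla |f|| = |\nabla f|$ a.e., and write $f(x) = f_0(|x|)$; then $u := \widetilde{M}_\beta f$ is radial, $u(x) = U(|x|)$, and in polar coordinates the target inequality becomes
\begin{equation*}
\left( \int_0^\infty |U'(r)|^q\, r^{d-1}\,\dd r \right)^{1/q} \leq C \int_0^\infty |f_0'(r)|\, r^{d-1}\,\dd r ,
\end{equation*}
with $C$ absorbing the surface-measure constants. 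Thus what is sought is exactly a regularizing ``$L^1\to L^q$'' gain for the weighted derivative of $U$, matching the flat one-dimensional phenomenon of Theorem~\ref{Thm5.2} (indeed $q = 1/(1-\beta)$ when $d=1$).

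The core is a geometric reduction. For radial $f\geq 0$ and $x$ with $|x|=r>0$, I would show that the supremum defining $\widetilde{M}_\beta f(x)$ is governed, up to constants depending only on $d$, by a one-parameter family of balls adapted to the ray through the origin and $x$: on one side, balls staying away from a neighborhood of the origin, whose contribution is comparable to a genuine one-dimensional uncentered fractional maximal function of the weighted profile $s\mapsto f_0(s)\,s^{d-1}$ near $r$; and on the other side, balls containing the origin, whose contribution is a ``Hardy-type'' term of the form $\sup_{R\geq r} R^{-(d-\beta)}\int_0^R f_0(s)\,s^{d-1}\,\dd s$. The rotational invariance about the axis through $0$ and $x$ is what lets one replace a general competing ball by one of this adapted type without decreasing the average.

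Granting the reduction, I would estimate the two pieces separately. For the one-dimensional fractional maximal piece I would run the bootstrapping scheme of Carneiro and Madrid behind Theorem~\ref{Thm5.2} — bounding the $q$-variation of the maximal function on an interval by the variation of the datum on a comparable interval — now on the half-line equipped with the measure $s^{d-1}\,\dd s$; the weight is benign there since it is comparable on comparable intervals and already matches the normalization $m(B)^{1-\beta/d}$. For the Hardy-type piece, a supremum of a nice monotone average, one differentiates directly. Summing the local bounds by a covering/overlap argument exploiting the essentially monotone behavior of the optimal radii yields the global estimate with $C=C(d,\beta)$; weak differentiability of $\widetilde{M}_\beta f$ comes out as a by-product, since the same analysis gives local absolute continuity of $U$ on $(0,\infty)$ (in the spirit of the Banach--Zarecki mechanism behind Theorem~\ref{Thm_APL}), the single point $\{0\}$ being harmless.

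The main obstacle I anticipate is the geometric reduction itself: off-center balls that almost meet the origin interpolate between the two clean regimes, and controlling their contribution while simultaneously keeping track of the fractional normalization and the $r^{d-1}$ weight is where the technical weight of the argument sits. A second difficulty is that one must extract the genuine $L^q$-gain rather than a weaker $L^q$-to-$L^q$ bound: since $\widetilde{M}_\beta f(x) \geq |f(x)|$ can fail for $\beta>0$, the classical ``no local maxima off the graph'' argument of \cite{AP} is unavailable and one really needs the bootstrapping of \cite{CMa}.
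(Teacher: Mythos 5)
This survey does not reproduce the proof; the argument is in the cited paper of Luiro and Madrid \cite{LuMa}. Measured against that, your plan shares the right geography (balls reaching the origin versus balls staying away from it; reduction to a one-dimensional radial variable), but its central mechanism has a genuine gap. Your reduction replaces $\widetilde{M}_{\beta}f$ by an adapted maximal function that is only \emph{comparable} to it ``up to constants depending on $d$'': the cap area $\sigma\big(S^{d-1}(0,\rho)\cap B(y,s)\big)$ is comparable to $s^{d-1}$ only in the interior of the interval $(|y|-s,|y|+s)$ and degenerates at its endpoints, so the passage from $s^{-(d-\beta)}\int_{B(y,s)}f$ to an uncentered fractional average of $f_0(\rho)\rho^{d-1}$ is an inequality in each direction, not an identity; the same is true of the Hardy-type term, since balls containing the origin need not be centered there. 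Pointwise two-sided comparability of maximal functions does \emph{not} transfer to their gradients, so bounding $\var$ (or $\|\nabla\cdot\|_{L^q}$) of the surrogate says nothing about $\nabla\widetilde{M}_{\beta}f$. Either the reduction must be an exact identity (it is exact only for the first step, restricting centers to the line through $0$ and $x$ --- and even there your justification is wrong: rotations fixing the axis through $0$ and $x$ do not move the center onto that axis; what one actually uses is that $y\mapsto\int_{B(y,s)}f$ is radial together with the fact that the admissible set of $|y|$ is unchanged), or one must differentiate the true maximal function directly. The latter is what Luiro and Madrid do: they establish Luiro's formula $\nabla\widetilde{M}_{\beta}f(x)=|B|^{\beta/d}\,\frac{1}{|B|}\int_{B}\nabla f$ along best balls $B$ for $x$, which is an exact a.e.\ identity, and then run the case analysis on the size of the best radius relative to $|x|$ at the level of this formula. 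This is precisely the device that makes the ``up to constants'' geometry harmless, and it is absent from your plan.

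A second, quantitative problem is the claim that the weight ``already matches the normalization $m(B)^{1-\beta/d}$'' so that the bootstrapping of \cite{CMa} applies on the half-line with measure $s^{d-1}\,\dd s$. It does not match: for a ball of radius $s$ at distance $t\gg s$ from the origin one has $m(B)^{1-\beta/d}\sim s^{d-\beta}$ while $\int_{B}f\lesssim s^{d-1}\int_{t-s}^{t+s}f_0$, so the induced one-dimensional operator carries the normalization $s^{-(1-\beta)}$, whose natural exponent is $1/(1-\beta)$, not the target $q=d/(d-\beta)$; reconciling this with the weight $r^{d-1}$ in the target norm (and near the origin, where the weight is not doubling-comparable on the relevant intervals) is a substantive part of the work, not a routine transfer of Theorem \ref{Thm5.2}. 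Finally, weak differentiability is not a free by-product of one-dimensional absolute continuity of the profile; in \cite{LuMa} it is obtained beforehand (via Kinnunen--Saksman/Luiro-type arguments) and is needed to justify the derivative formula in the first place.
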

\noindent We can hence ask ourselves the follow up question, which is a particular case of Question 7.

\medskip

\noindent {\bf Question 8.} {\it Does Theorem \ref{LuMa_Th5.3} hold for the centered fractional maximal operator $M_{\beta}$ acting on radial functions $f$}\,?

\medskip

Also in the higher dimensional case, the very interesting recent work of Beltran, Ramos and Saari \cite{BRS} establishes endpoint bounds for derivatives of fractional maximal functions in the spirit of the ones proposed in Question 7. They consider the slightly different setting of maximal operators either associated to smooth convolution kernels or to a lacunary set of radii in dimensions $d \geq 2$ (see \cite[Theorem 1]{BRS}). In this work, they also show that the spherical maximal operator maps $L^p$ into a first order Sobolev spaces in dimensions $d \geq 5$. One of the novelties in the approach of \cite{BRS} is the use of Fourier analysis techniques.

\section{Discrete analogues}\label{Sec6}

The problems we have discussed so far can also be considered in a discrete setup. A point $n \in \Z^d$ is a $d$-uple $n = (n_1, n_2, \ldots, n_d)$ with each $n_i \in \Z$. For a function $f:\mathbb{Z}^{d}\rightarrow \mathbb{R}$ (or, in general, for a vector-valued function $f:\mathbb{Z}^{d}\rightarrow \mathbb{R}^m$) we define its $\ell^{p}$-norm as usual:
\begin{equation}\label{Intro_l_p_norm}
\|f\|_{\ell^{p}{( \Z^{d})}}= \left(\sum_{ n\in \Z^{d}} {|f(n)|^{p}}\right)^{1/p},
\end{equation}
if $1\leq p<\infty$, and
\begin{equation*}
\|f\|_{\ell^{\infty}{(\Z^{d})}}= \sup_{n\in\Z^{d} }{|f(n)|}.
\end{equation*}
The gradient $\nabla{f}$ of a discrete function $f:\Z^d \to \R$ is the vector
\begin{equation*}
\nabla{f(n)}= \big( \partial_1{f} (n), \partial_2{f} (n), \ldots, \partial_d{f} (n)\big),
\end{equation*}
where
\begin{equation*}
\partial_i{f} (n)=f(n+e_{i})-f(n),
\end{equation*}
and $e_{i}=(0,0,\ldots,1,\ldots,0)$ is the canonical $i-$th base vector. If $f: \Z \to \R$ is a given function, we define its total variation as
$$\var(f) = \|f'\|_{\ell^1(\Z)} = \sum_{n \in \Z} |f(n+1) - f(n)|.$$

\smallskip

If $f \in \ell^p(\Z^d)$, observe by the triangle inequality that we have $\nabla f \in \ell^p(\Z^d)$ as well. Therefore, if we were to copy and paste the definition of the Sobolev space $W^{1,p}(\R^d)$ to the discrete setting, we would simply find the space $\ell^p(\Z^d)$ with a norm equivalent to \eqref{Intro_l_p_norm}. Hence, in what follows, some of the questions that were formulated using the Sobolev spaces $W^{1,p}(\R^d)$ in the continuous setting will now be formulated within $\ell^p(\Z^d)$.

\subsection{One-dimensional results} We may start by defining the discrete analogue of \eqref{HL_Max} in the one-dimensional case. For $f: \Z \to \R$ we define the discrete centered one-dimensional Hardy-Littlewood maximal function $\M f:\Z \to \R$ by
$$\M f(n) = \sup_{r\geq0} \frac{1}{(2r+1)}\sum_{k = -r}^{r} |f(n+k)|,$$
where the supremum is taken over nonnegative and integer values of $r$. Analogously, we define the uncentered version of this operator by
$$\widetilde{\M} f(n) = \sup_{r,s\geq0} \frac{1}{(r+s+1)}\sum_{k = -r}^{s} |f(n+k)|,$$
where the supremum is taken over nonnegative and integer values of $r$ and $s$. As in the continuous case, the uncentered version is more friendly for the sort of questions we investigate here. For instance, the analogue of Theorem \ref{Thm_APL} was established in \cite[Theorem 1]{BCHP}.
\begin{theorem}[Bober, Carneiro, Pierce and Hughes, 2012 - cf. \cite{BCHP}] If $f:\Z \to \R$ is a function of bounded variation, then
$$\var\big( \widetilde{\M} f\big) \leq \var(f).$$
\end{theorem}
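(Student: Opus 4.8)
The plan is to mimic the continuous argument of Aldaz and P\'{e}rez L\'{a}zaro (Theorem \ref{Thm_APL}), adapting it to the discrete setting where the underlying measure is counting measure on $\Z$. The key structural fact I would aim to establish is that at any point $n$ in the \emph{detachment set} $\{n : \widetilde{\M}f(n) > |f(n)|\}$, the maximal function $\widetilde{\M}f$ cannot have a strict local maximum; in fact I expect the stronger statement that on a maximal run of consecutive integers lying in the detachment set, $\widetilde{\M}f$ is monotone, or at worst has no interior peak. The heuristic is the same as in the continuous case: if $\widetilde{\M}f(n)$ is achieved by an average over a discrete interval $[n-r, n+s]$ that does not already contain the ``one-sided'' information forcing the value down, then one can slide or lengthen the window to show that at least one neighbor $n\pm 1$ has an average at least as large, contradicting a strict interior maximum.

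First I would set up notation: for $f \in \ell^1$ or of bounded variation, let $A[a,b] = \frac{1}{b-a+1}\sum_{k=a}^b |f(k)|$ denote the average over the discrete block $[a,b] \cap \Z$, so that $\widetilde{\M}f(n) = \sup_{a \le n \le b} A[a,b]$, and note the supremum is attained when $f$ has bounded variation (the averages tend to $0$ at infinity if $f \in \ell^1$, or one reduces to that case). Second, I would prove the no-local-maximum lemma on the detachment set by the sliding-window comparison: if $[a,b]$ is an optimal block for $n$ with $a < n < b$ or with the block extending to one side, then comparing $A[a,b]$ with $A[a+1,b]$, $A[a,b+1]$, $A[a-1,b]$, $A[a,b-1]$ and using that the value at $n$ strictly exceeds $|f(n)|$, one of the shifted blocks is admissible for $n+1$ or $n-1$ and has average $\ge \widetilde{\M}f(n)$. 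Third, with this lemma in hand I would decompose $\Z$ into the ``contact set'' $\{\widetilde{\M}f = |f|\}$ and the countably many maximal detachment runs between contact points; on each detachment run $\widetilde{\M}f$ is (weakly) monotone or V/\textLambda-free, so its total variation over that run telescopes and is bounded by the jump of $\widetilde{\M}f$ between the two endpoint contact values, which in turn is controlled by the variation of $|f|$ over the corresponding region. Summing over all runs and using $\var(|f|) \le \var(f)$, together with a careful accounting that no portion of $\var(f)$ is charged twice, yields $\var(\widetilde{\M}f) \le \var(f)$.

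The main obstacle I anticipate is exactly the bookkeeping in the last step: proving that the contributions of the detachment runs, when summed, do not exceed $\var(f)$ with constant $1$ rather than $2$. One must argue that on a detachment run the variation of $\widetilde{\M}f$ is bounded not merely by an oscillation of $|f|$ but by the variation of $|f|$ over an interval that can be associated injectively to that run, so that the intervals used for different runs are essentially disjoint. This is the discrete analogue of the subtle point in Aldaz--P\'{e}rez L\'{a}zaro where the absence of local maxima is leveraged to avoid the factor $2$. A secondary technical point is handling the endpoints of detachment runs and the behavior of optimal windows there (e.g.\ when the optimal block for $n$ is one-sided, $a = n$ or $b = n$), and confirming that the attainment of the supremum and the monotonicity argument go through without the continuum tools (Banach--Zarecki, absolute continuity) that have no exact discrete counterpart — here everything is finite sums, so ``absolute continuity'' is vacuous and the entire content is the variation inequality.
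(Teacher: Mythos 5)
Your plan is essentially the proof in the cited source \cite{BCHP}: one shows that the discrete uncentered maximal function has no local maxima on the detachment set $\{\widetilde{\M}f>|f|\}$ and then deduces the variation bound exactly as in Aldaz--P\'erez L\'azaro, so the approach matches. The only remark worth adding is that both of the points you flag as delicate are easier here than you anticipate: the no-local-maximum lemma is immediate because any admissible block for $n$ other than the singleton $\{n\}$ already contains $n-1$ or $n+1$, giving $\widetilde{\M}f(n)\le\max\big\{|f(n)|,\widetilde{\M}f(n-1),\widetilde{\M}f(n+1)\big\}$ without any window shifting, and the constant-$1$ bookkeeping is handled automatically by writing $\var(\widetilde{\M}f)$ as a sum over alternating local extrema and using that $\widetilde{\M}f=|f|$ at each local maximum while $\widetilde{\M}f\ge|f|$ at each local minimum, so each peak-to-valley difference is dominated by the corresponding difference for $|f|$ over pairwise essentially disjoint blocks.
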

\noindent This inequality is sharp as one can see by the ``delta" example $f(0) = 1$ and $f(n) = 0$ for $n \neq 0$. The same sort of inequality in the centered case is subtler. Assume for a moment that we have 
\begin{equation}\label{disc_cent}
\var(\M f) \leq \var(f)
\end{equation}
for any $f:\Z \to \R$ of bounded variation. Then, by \eqref{disc_cent} and an application of the triangle inequality, we would have the weaker inequality
\begin{equation}\label{disc_cent_2}
\var( \M f) \leq 2\|f\|_{\ell^1(\Z)}.
\end{equation}
Inequality \eqref{disc_cent_2} was proved in \cite[Theorem 1]{BCHP} with constant $C = 2 + \frac{146}{315}$ replacing the constant $C =2$, and it was proved with the sharp constant $C =2 $ in the recent work of Madrid \cite[Theorem 1.1]{M}. The fact that $C=2$ is sharp in \eqref{disc_cent_2} is again seen by taking the delta example.

\smallskip

The interesting part of the story is that \eqref{disc_cent} is still not known. The BV-boundedness in the discrete centered case was proved by Temur \cite{Te}, adapting the circle of ideas developed by Kurka \cite{Ku} for the continuous case.

\begin{theorem}[Temur, 2013 - cf. \cite{Te}]
If $f:\Z \to \R$ is a function of bounded variation, then
$$\var(\M f) \leq C \,\var(f)$$
with $C = (72000)2^{12} + 4 = 294912004$.
\end{theorem}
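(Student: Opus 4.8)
\smallskip

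\noindent The plan is to carry out the discrete version of Kurka's induction-on-scales argument, as done by Temur \cite{Te} (and modeled on the continuous case \cite{Ku}). First I would make the standard reductions: since $\M f$ is unchanged and the total variation does not increase when $f$ is replaced by $|f|$, I may assume $f \geq 0$, and by approximation I may assume $f$ has finite support. Because the radius $r=0$ is admissible, $\M f(n) \geq f(n)$ for every $n$, so $\Z$ splits into the \emph{contact set} $\{n : \M f(n) = f(n)\}$ and the \emph{detachment set} $E = \{n : \M f(n) > f(n)\}$, and $E$ is a disjoint union of maximal blocks of consecutive integers $I_1, I_2, \ldots$, each flanked on both sides by contact points. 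On the contact set the values of $\M f$ are values of $f$, so the variation of $\M f$ picked up on the contact set and across the two ends of each block is at most an absolute multiple of $\var(f)$; tracking constants, this is where the additive term $4$ in the final bound originates. The whole problem thus reduces to estimating $\sum_j \var(\M f; I_j)$, the variation of $\M f$ over the detachment blocks.

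Next I would truncate the operator: for $N \geq 0$ set
\[
\M^{N} f(n) \,=\, \sup_{0 \leq r \leq 2^{N}} \frac{1}{2r+1} \sum_{k=-r}^{r} f(n+k),
\]
so that $\M^{N} f \uparrow \M f$ pointwise as $N\to\infty$ and hence $\var(\M f) \leq \liminf_{N} \var(\M^{N} f)$. It then suffices to prove $\var(\M^{N} f) \leq C\,\var(f)$ with $C$ independent of $N$, which I would do by induction on $N$; the base case is an elementary computation, since for small $N$ the function $\M^{N} f$ differs from $f$ only through averages over a few short windows. For the inductive step I would split the detachment blocks of $\M^{N+1} f$ into those on which the defining supremum is already attained at a radius $r \leq 2^{N}$ — on these $\M^{N+1} f$ essentially coincides with $\M^{N} f$ and is governed by the induction hypothesis — and the \emph{top-scale} blocks, on which the optimal radii genuinely lie in $(2^{N}, 2^{N+1}]$. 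The entire difficulty is concentrated in the latter.

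On a single top-scale block $I$, the function $\M^{N+1} f$ is locally an average of $f$ over windows of length comparable to $2^{N}$, hence varies slowly; the aim is to show $\var(\M^{N+1} f; I) \leq C' \,\var(f; I^{\ast})$ for an absolute $C'$ and a dilate $I^{\ast} \supset I$ of comparable length, exploiting that at the two endpoints of $I$ the maximal function drops back to the smaller value $\M^{N} f$, which pins the height of the detachment bump to the oscillation of $f$ over $I^{\ast}$. The decisive combinatorial input, exactly as in \cite{Ku, Te}, is that the enlarged intervals $I^{\ast}$ attached to the top-scale blocks have bounded overlap, so summing the local estimates costs only a bounded factor times $\var(f)$. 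One then arranges the decomposition so that the induction-hypothesis part and the top-scale part together still fit under $C\,\var(f)$; propagating every numerical constant — the overlap constant, the loss in the local estimate, the factors in the cross-scale bookkeeping — is precisely what forces, and produces, the explicit value $C = (72000)2^{12} + 4 = 294912004$.

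I expect the main obstacle to be exactly this top-scale analysis, together with the bookkeeping that lets the induction close with a \emph{single} fixed constant in spite of the a priori possibility that variation accumulates over all infinitely many scales. Within the top-scale estimate the delicate points are (i) showing that on such a block the optimal radius is genuinely comparable to the block's length — the upper bound is easy, but the lower bound and, especially, the control of how the optimal radius moves from point to point rest on a careful structural description of the discrete maximal function — and (ii) the bounded-overlap property for the dilates $I^{\ast}$, which requires tracking the geometry of the detachment blocks across all scales simultaneously. The purely discrete features (odd window lengths, parity of radii, degeneracies at very small radii) add only routine case distinctions and are absorbed harmlessly into the large constant.
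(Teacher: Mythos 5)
Your outline identifies the correct strategy: the paper itself contains no proof of this theorem (it only cites Temur's preprint and describes his argument as ``adapting the circle of ideas developed by Kurka,'' i.e.\ an induction on scales), and the skeleton you describe --- reduction to $f\ge 0$, splitting $\Z$ into the contact set and the detachment blocks, truncating the radius, and isolating the top-scale blocks --- is a faithful paraphrase of that strategy. The preliminary reductions and the observation that the contact-set contribution is controlled by an absolute multiple of $\var(f)$ are sound and routine.

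However, what you have written is a roadmap, not a proof: every step that actually carries the weight of the theorem is asserted rather than established. Concretely, you do not prove (i) the local estimate $\var(\M^{N+1}f;I)\le C'\,\var(f;I^{\ast})$ on a top-scale block, which is where the real difficulty lives --- for the \emph{centered} operator the maximal function can have local maxima inside the detachment set (the $\delta_{-1}+\delta_{1}$ phenomenon the paper discusses), so one cannot pin the height of the bump on $I$ to the oscillation of $f$ by the monotonicity arguments that work in the uncentered case; (ii) the bounded-overlap property of the dilates $I^{\ast}$, which is a genuine combinatorial lemma about how the optimal radii at different points interact, not a consequence of the blocks being disjoint (dilates of disjoint intervals can in general overlap arbitrarily badly, and ruling this out requires exploiting the structure of the maximal function); and (iii) the closing of the induction with a constant uniform in $N$, which requires the recursive inequality to have a self-improving form rather than merely ``fitting under $C\,\var(f)$'' --- a naive accumulation of losses over scales would produce a bound growing with $N$. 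Finally, the constant $(72000)2^{12}+4$ is not derived anywhere; it can only emerge from actually executing the bookkeeping you defer. As it stands, the proposal correctly names the approach but contains none of its technical substance, so it cannot be accepted as a proof of the stated theorem.
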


We record here the open inequality \eqref{disc_cent}.

\medskip

\noindent {\bf Question 9.} {\it Let $f:\Z \to \R$ be a function of bounded variation. Do we have
$$\var(\M f) \leq  \var(f)\,?$$
Or at least, can one substantially improve on Temur's constant $C = 294912004$}\,?

\medskip

Having discussed the classical case, we may now consider the {\it discrete fractional} case. For $0 \leq \beta < 1$ and $f :\Z\to \R$, we define the one-dimensional discrete centered fractional maximal operator by
\begin{equation*}
\M_{\beta}f(n) = \sup_{r \geq 0} \,\frac{1}{(2r+1)^{1 - \beta}} \sum_{k = -r}^{r} |f(n +k)|
\end{equation*}
and its uncentered version by
\begin{equation*}
\wt{\M}_{\beta}f(n) = \sup_{r,s \geq 0} \,\frac{1}{(r + s+1)^{1 - \beta}} \sum_{k = -r}^{s} |f(n +k)|.
\end{equation*}
For $f :\Z\to \R$ and $1\leq q < \infty$, the discrete analogue of \eqref{Intro_q_var} is the $q$-variation defined by
\begin{equation*}
\var_q(f) = \left(\sum_{n\in \Z} |f(n+1) - f(n)|^q\right)^{1/q} = \|f'\|_{\ell^q(\Z)}.
\end{equation*}
The discrete analogue of Theorem \ref{Thm5.2} was also established in \cite{CMa}.
\begin{theorem}[Carneiro and Madrid, 2017 - cf. \cite{CMa}]
Let $0\leq \beta < 1$ and $q = 1/(1-\beta)$. Let $f:\Z \to \R$ be a function of bounded variation such that $\wt{\M}_{\beta}f \not\equiv \infty$. Then 
\begin{equation*}
\big\|\big(\wt{\M}_{\beta}f\big)'\big\|_{\ell^q(\Z)} = \var_q\big(\wt{\M}_{\beta}f\big) \leq 4^{1/q} \,\var(f).
\end{equation*}
\end{theorem}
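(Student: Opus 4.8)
The plan is to build on a feature special to the \emph{discrete} problem: since the window of radius $0$ carries normalization $(2\cdot0+1)^{1-\beta}=1$, one always has $\wt{\M}_{\beta}f(n)\ge|f(n)|$ at every point, whereas this fails for the continuous fractional operator. Consequently the classical geometry of a detachment set survives and an Aldaz--P\'{e}rez L\'{a}zaro type analysis becomes available; the price of passing to $\beta>0$ and to the discrete grid shows up only as the constant $4^{1/q}$ in place of the value $1$ known for $\beta=0$.

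I would begin with the usual reductions. As $\wt{\M}_{\beta}|f|=\wt{\M}_{\beta}f$ and $\var(|f|)\le\var(f)$, assume $f\ge 0$, and $\var(f)<\infty$. Since $\var(f)<\infty$ forces $f$ to have limits at $\pm\infty$, and a nonzero limit would make $(2r+1)^{\beta-1}\sum_{k=-r}^{r}f(n+k)$ blow up as $r\to\infty$ (here $1-\beta>0$ is used), the hypothesis $\wt{\M}_{\beta}f\not\equiv\infty$ forces $f(n)\to 0$; one then checks that $\wt{\M}_{\beta}f$ is finite everywhere and that the defining supremum at each $n$ is attained (or may be replaced by a near-optimal choice) on a finite window $I_n=[n-r_n,\,n+s_n]\cap\Z$, with $u:=\wt{\M}_{\beta}f\in\ell^{\infty}$, $u(n)\to 0$, and $u\ge f$ pointwise. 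Note also that in the discrete setting $\var_q(u)=\|u'\|_{\ell^q(\Z)}$ holds by definition, so the equality $\|(\wt{\M}_{\beta}f)'\|_{\ell^q(\Z)}=\var_q(\wt{\M}_{\beta}f)$ requires no Riesz-type theorem; the whole content is the inequality $\var_q(u)\le 4^{1/q}\var(f)$.

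Next comes the structural step. Let $E=\{n:u(n)>f(n)\}$. If $n_0\in E$ then $|I_{n_0}|\ge 2$, and if $r_{n_0}\ge 1$ the \emph{same} window $I_{n_0}$, read as an admissible window of equal length for the point $n_0-1$, gives $u(n_0-1)\ge u(n_0)$; symmetrically if $s_{n_0}\ge 1$. Hence $u$ has no strict local maximum on $E$: every strict local maximum of $u$ is a contact point, where $u=f$. I would then decompose $\Z$ into the maximal intervals on which $u$ is monotone. Since $\var_q(u)^q=\|u'\|_{\ell^q}^q$ adds over this partition and, on a monotone run, $\sum(\text{increments})^q\le(\sum\text{increments})^q$ for $q\ge1$, it suffices to bound, for each consecutive pair of local maxima $p_i<p_{i+1}$ of $u$ with intervening local minimum $q_i$, the two arm oscillations $u(p_i)-u(q_i)$ and $u(p_{i+1})-u(q_i)$ by pieces of $\var(f)$.

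The heart of the matter — and the step I expect to be genuinely hard — is this bootstrapping estimate. On an ascending arm $[q_i,p_{i+1}]$, the sliding-window trick shows that at a point of strict increase the optimal window is one-sided to the right, so $u(m)=|I_m|^{-(1-\beta)}\sum_{j=0}^{|I_m|-1}f(m+j)$; evaluating $u(m)$ on the enlarged window $\{m\}\cup I_{m+1}$ then yields $u(m)\ge(|I_{m+1}|+1)^{-(1-\beta)}\bigl(f(m)+|I_{m+1}|^{1-\beta}u(m+1)\bigr)$, which controls $u(m+1)-u(m)$ by $f(m)$ times a decay factor of size $\approx(1-\beta)/|I_{m+1}|$. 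Telescoping these, together with $u(p_{i+1})=f(p_{i+1})$ and the fact that $u(q_i)$ dominates the fractional average of $f$ over any window around $q_i$, should bound $\var_q(u;[q_i,p_{i+1}])$ by the variation of $f$ over a window comparable to $I_{q_i}\cup I_{p_{i+1}}$; the descending arm is symmetric. Summing over runs, the comparison windows overlap with bounded multiplicity, so the elementary inequality $\sum_i x_i^q\le(\sum_i x_i)^q$ for $q\ge1$ (which in particular gives $\var_q(f)\le\var(f)$) collapses everything to $\var_q(u)^q\le 4\,\var(f)^q$. I expect the real difficulty to lie entirely here: squeezing the sharp-in-$q$ telescoping inequalities out of the explicit one-sided averages, and, above all, the combinatorial accounting at plateaus of $u$ sitting at a local maximum inside $E$ (a configuration with no continuous analogue, since there plateaus are negligible) and at the two unbounded monotone tails where $u\to 0$ — and it is this accounting that pins down the constant $4^{1/q}$, presumably not the sharp one.
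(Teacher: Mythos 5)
Your opening observation is correct and genuinely relevant: in the discrete setting the radius-zero window gives $\wt{\M}_{\beta}f(n)\ge |f(n)|$, so the detachment set $E=\{n: u(n)>f(n)\}$ (with $u=\wt{\M}_{\beta}f$) is well defined, your sliding-window argument does show that $u$ has no \emph{strict} local maximum in $E$, and the equality $\|u'\|_{\ell^q(\Z)}=\var_q(u)$ is indeed definitional here. (A small slip in the reductions: the blow-up of $(r+s+1)^{\beta-1}\sum_{k=-r}^{s}f(n+k)$ for a nonzero limit of $f$ uses $\beta>0$, not $1-\beta>0$; for $\beta=0$ the limits need not vanish, though that case is already known with constant $1$.) The genuine gap is exactly where you place it, and it is not closed. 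For $\beta>0$ a local maximum of $u$ can be a plateau lying entirely inside $E$: take $f=\delta_0+\delta_2$ and $\beta$ close to $1$; then $u(0)=u(1)=u(2)=2\cdot 3^{\beta-1}>1=\max f$ while $u(-1)=u(3)=2\cdot4^{\beta-1}$ is strictly smaller. Hence the Aldaz--P\'erez L\'azaro accounting $u(\mathrm{peak})-u(\mathrm{valley})\le f(\mathrm{peak})-f(\mathrm{valley})$, which is what makes your monotone-run decomposition close for $\beta=0$, is unavailable: the peak value of $u$ need not be a value of $f$ and in this example exceeds $\max f$ by a factor approaching $2$. Your substitute, the inequality $u(m)\ge(|I_{m+1}|+1)^{\beta-1}\bigl(f(m)+|I_{m+1}|^{1-\beta}u(m+1)\bigr)$, is correct but, after rearranging, bounds $u(m+1)-u(m)$ by roughly $(1-\beta)\,u(m+1)/|I_{m+1}|$ minus a term in $f(m)$; this controls increments of $u$ by values of $u$, not by the variation of $f$, and no mechanism is supplied to convert the telescoped sum into $\var(f)$ over a comparable window, to handle the plateau peaks just described, or to carry out the ``bounded multiplicity'' summation. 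Consequently the constant $4^{1/q}$ is asserted rather than derived, and the central quantitative lemma of the proof is missing.

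For comparison with the source: the survey describes the method of \cite{CMa} in the fractional case as deliberately avoiding the contact-point/detachment-set route, replacing it by a bootstrapping scheme in which the $q$-variation of $\wt{\M}_{\beta}f$ over an interval is bounded by the variation of $f$ over a controlled enlargement plus a fixed fraction of the $q$-variation of $\wt{\M}_{\beta}f$ over that enlargement, the constant then coming from summing the resulting geometric series. Your outline correctly isolates the obstruction (local-maximum plateaus of $u$ inside $E$, a configuration absent for $\beta=0$) but does not supply the estimate that overcomes it, so as written this is a plan rather than a proof.
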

\noindent As in the continuous case, we remark that the constant $C = 4^{1/q}$ above is not necessarily sharp. The same inequality for the centered fractional case is currently an open problem.

\medskip

\noindent {\bf Question 10.} {\it Let $0< \beta < 1$ and $q = 1/(1-\beta)$. Let $f:\Z \to \R$ be a function of bounded variation such that $\M_{\beta}f \not\equiv \infty$. Do we have
$$\var_q(\M_{\beta}f) \leq C \,\var(f)$$
for some universal constant $C$?}

\subsection{Multidimensional results} In discussing the multidimensional discrete setting we allow ourselves a more general formulation, in which we consider maximal operators associated to general convex sets. Let $\Omega\subset \mathbb R^{d}$ be a bounded open convex set with Lipschitz boundary. Let us assume that $ 0\in$ int$(\Omega)$. For $r>0$ we write 
\begin{equation*}
\overline\Omega(x,r) =\big\{y \in\mathbb R^{d}; \, r^{-1}(y-x)\in \overline{\Omega}\big\},
\end{equation*}
and for $r=0$ we consider
\begin{equation*}
\overline\Omega(x,0) =\{x\}.
\end{equation*}
This object is the ``$\Omega$-ball of center $x$ and radius $r$" in our maximal operators below. For instance, to work with regular $\ell^{p}-$balls, one should consider $\Omega=\{ x\in\R^{d}; \|x\|_{p}<1 \}$, where $\|x\|_{p}=(|x_{1}|^{p}+|x_{2}|^{p}+\ldots+|x_{d}|^{p})^{\frac{1}{p}} $ for $ x=(x_{1},x_{2},\ldots,x_{d})\in \R^{d}.$ These convex $\Omega$--balls have roughly the same behavior as the regular Euclidean balls from the geometric and arithmetic points of view. For instance, we have the following asymptotics \cite[Chapter VI \S 2, Theorem 2]{Lang}, for the number of lattice points $N(x,r)$ of $\overline\Omega(x,r)$,
\begin{equation*}
 N(x,r)=C_{\Omega}\,r^{d}+O\big(r^{d-1}\big) 
\end{equation*} 
as $r \rightarrow \infty$, where $C_{\Omega}=m(\Omega)$ is the $d$--dimensional volume of $\Omega,$ and the constant implicit in the big O notation depends only on the dimension $d$ and on the set $\Omega$.

\smallskip

Given $0\leq \beta<d$ and $f:\Z^d \to \R$, we denote by $\M_{\Omega, \beta}$ the discrete centered fractional maximal operator associated to $\Omega$, i.e. 
\begin{equation*}
\M_{\Omega, \beta}f(n)=\sup_{r\geq 0} \frac{1}{N(0,r)^{1- \frac{\beta}{d}}}\,\sum_{m\in \overline\Omega(0,r) }{|f(n+m)}|,
\end{equation*}
and we denote by $\widetilde{\M}_{\Omega,\beta}$ its uncentered version
\begin{equation*}
\widetilde{\M}_{\Omega,\beta}f(n)=\sup_{\overline\Omega(x,r) \owns  n}\, \frac{1}{N(x,r)^{1-\frac{\beta}{d}}}\,{\sum_{m\in \overline\Omega (x,r)}{|f(m)}|}.
\end{equation*}
Here $r$ is a nonnegative real parameter. 

\smallskip

The $\ell^p-\ell^q$ boundedness numerology for these discrete operators is the very same as the continuous fractional Hardy-Littlewood maximal operator (see \cite{CMa} for a discussion), that is, if $1 < p < \infty$, $0 < \beta < d/p$ and $q = dp/(d-\beta p)$, then $\M_{\Omega, \beta}: \ell^p(\Z^d) \to \ell^q(\Z^d)$ is bounded (same for $\widetilde{\M}_{\Omega,\beta}$). Motivated by the endpoint philosophy in the continuous setting, a typical question here should be: let $0 \leq \beta <d$ and $q = d/(d-\beta)$; for a discrete function $f:\Z^d \to \R$ do we have $\|\nabla \M_{\Omega,\beta}f\|_{\ell^{q}(\Z^{d})}\leq C (d, \Omega,\beta) \,\|\nabla f\|_{\ell^{1}(\Z^{d})}$? (same question for $\widetilde{\M}_{\Omega,\beta}$). As in the continuous case, this question admits a positive answer if $1 \leq \beta <d$. In the harder case $0\leq \beta < 1$, the current state of affairs is that one has a family of estimates that {\it approximate} the conjectured bounds (but unfortunately blow up when one tries to get exactly there). This was established in \cite[Theorem 3]{CMa} and we quote below.
\begin{theorem}[Carneiro and Madrid, 2017 - cf. \cite{CMa}].\label{Thm6.4}
\begin{enumerate}
\item[(i)] Let $0\leq \beta <d$ and $0 \leq \alpha \leq 1$. Let $q \geq 1$ be such that 
\begin{equation*}
q > \frac{d}{d - \beta + \alpha}.
\end{equation*}
Then there exists a constant $C = C(d,\Omega,\alpha,\beta,q) >0$ such that
\begin{equation}\label{Thm6.4_eq}
\|\nabla \M_{\Omega,\beta}f\|_{\ell^q(\Z^d)} \leq C\,\|\nabla f\|_{\ell^1(\Z^d)}^{1-\alpha}\,\|f\|^{\alpha}_{\ell^{1}(\Z^{d})}\ \ \forall f\in \ell^{1}(\Z^{d}).
\end{equation}
Moreover, the operator $f\mapsto \nabla \M_{\Omega,\beta}f$ is continuous from $\ell^{1}(\Z^d)$ to $\ell^{q}(\Z^{d})$.

\smallskip

\item[(ii)] Let $1\leq \beta <d$ and $0 \leq \alpha < 1$. Let 
\begin{equation*}
q = \frac{d}{d - \beta + \alpha}.
\end{equation*}
Then there exists a constant $C = C(d,\Omega,\alpha,\beta) >0$ such that
\begin{equation*}
\|\nabla \M_{\Omega,\beta}f\|_{\ell^q(\Z^d)} \leq C\,\|\nabla f\|_{\ell^1(\Z^d)}^{1-\alpha}\,\|f\|^{\alpha}_{\ell^{1}(\Z^{d})}\ \ \forall f\in \ell^{1}(\Z^{d}).
\end{equation*}
Moreover, the operator $f\mapsto \nabla \M_{\Omega,\beta}f$ is continuous from $\ell^{1}(\Z^d)$ to $\ell^{q}(\Z^{d})$.
\end{enumerate} 
 
\smallskip

The same results hold for the discrete uncentered fractional maximal operator $\widetilde{\M}_{\Omega,\beta}$. 
\end{theorem}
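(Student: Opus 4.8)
The plan is to prove the quantitative inequalities by combining a Kinnunen‑type pointwise bound with the $\ell^p\to\ell^q$ mapping properties (weak and strong) of fractional maximal operators on $\Z^d$, interpolating to reach the stated ranges of $\alpha$ and $q$; one genuinely endpoint sub‑case requires an additional dyadic scale‑truncation, and the continuity assertion needs a limiting argument on top of the bounds. Since $\M_{\Omega,\beta}f=\M_{\Omega,\beta}|f|$ and $\|\nabla|f|\|_{\ell^1(\Z^d)}\le\|\nabla f\|_{\ell^1(\Z^d)}$, we may assume $f\ge0$. For such $f$ and $\beta<d$ the averages defining $\M_{\Omega,\beta}f(n)$ form a step function of $r$ tending to $0$ as $r\to\infty$, so the supremum is attained at some radius $r_n$. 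Comparing the optimal ball at $n$ with the same ball recentred at $n+e_i$ (and conversely) yields, in the spirit of Kinnunen's pointwise bound (Theorem \ref{Thm_Kin}), the estimate
$$\big|\nabla\M_{\Omega,\beta}f(n)\big|\le C_d\,\M_{\Omega,\beta}\big(|\nabla f|\big)(n)\qquad(n\in\Z^d),$$
and the weak‑type $\ell^1\to\ell^{d/(d-\beta),\infty}$ bound for $\M_{\Omega,\beta}$ (standard, by comparison with the discrete Riesz potential of order $\beta$) gives $\|\nabla\M_{\Omega,\beta}f\|_{\ell^{d/(d-\beta),\infty}(\Z^d)}\le C\,\|\nabla f\|_{\ell^1(\Z^d)}$. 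Because the counting measure has no small atoms one has $\ell^{p_0,\infty}(\Z^d)\hookrightarrow\ell^q(\Z^d)$ for every $q>p_0$; with $p_0=d/(d-\beta)$ this already yields part (i) in the case $\alpha=0$, where the hypothesis reads precisely $q>d/(d-\beta)$.

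For part (ii), i.e. $1\le\beta<d$, I would instead use the stronger Kinnunen--Saksman pointwise bound $|\nabla\M_{\Omega,\beta}f|\le C\,\M_{\Omega,\beta-1}f$, available exactly because $\beta-1\ge0$ (this is Theorem \ref{KS_2003}(ii) in discrete form), together with the discrete Gagliardo--Nirenberg--Sobolev inequality $\|f\|_{\ell^{d/(d-1)}(\Z^d)}\le C_d\,\|\nabla f\|_{\ell^1(\Z^d)}$ and the strong $\ell^p\to\ell^{dp/(d-(\beta-1)p)}$ boundedness of $\M_{\Omega,\beta-1}$ for $p>1$. Choosing $p=d/(d-1+\alpha)$, so that $p>1$ is equivalent to $\alpha<1$, one checks that $dp/(d-(\beta-1)p)=d/(d-\beta+\alpha)=q$; interpolating $\|f\|_{\ell^p}\le\|f\|_{\ell^1}^{\alpha}\,\|f\|_{\ell^{d/(d-1)}}^{1-\alpha}$ then closes the chain
$$\|\nabla\M_{\Omega,\beta}f\|_{\ell^q}\lesssim\|\M_{\Omega,\beta-1}f\|_{\ell^q}\lesssim\|f\|_{\ell^p}\lesssim\|\nabla f\|_{\ell^1}^{1-\alpha}\,\|f\|_{\ell^1}^{\alpha},$$
which is part (ii); the special value $\alpha=0$ here is just the discrete analogue of the argument preceding Question 7.

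The remaining, genuinely endpoint, case is part (i) with $\beta<1$ and $\alpha>0$: now $q$ may fall below $d/(d-\beta)$, so the weak‑type bound is insufficient and $\M_{\Omega,\beta-1}$ is unavailable; I expect this to be the main obstacle. Here I would split $\Z^d$ according to whether the optimal ball at $n$ has at most $K$ lattice points or more, where $K$ is a carefully chosen power of the dimensionless ratio $\|f\|_{\ell^1}/\|\nabla f\|_{\ell^1}$. On the ``small‑radius'' set one has $|\nabla\M_{\Omega,\beta}f(n)|\le C\,\M^{\le K}_{\Omega,\beta}(|\nabla f|)(n)$, where $\M^{\le K}_{\Omega,\beta}$ is the fractional maximal operator restricted to those radii; since $\M^{\le K}_{\Omega,\beta}\delta_0$ is supported on $O(K)$ points with values at most $1$, the triangle inequality gives $\|\M^{\le K}_{\Omega,\beta}g\|_{\ell^q}\le C\,K^{1/q}\,\|g\|_{\ell^1}$, and the choice of $K$ turns $K^{1/q}\|\nabla f\|_{\ell^1}$ into $\|\nabla f\|_{\ell^1}^{1-\alpha}\|f\|_{\ell^1}^{\alpha}$. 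On the complementary ``large‑radius'' set one has $|\nabla\M_{\Omega,\beta}f(n)|\le 2\,\M_{\Omega,\beta}f(n)\le 2\,\|f\|_{\ell^1}\,N(0,r_n)^{\beta/d-1}$, which is small; combining this smallness with the weak‑$\ell^{d/(d-\beta)}$ bound of the first paragraph and a Vitali covering estimate for the number of centres whose optimal ball has $\sim 2^j$ points, one decomposes dyadically over the scales $2^j>K$ and integrates over level sets. It is precisely here that $q>d/(d-\beta+\alpha)$ is used: it is the condition that makes the resulting geometric series in $j$ converge, and tracking the constants produces $C=C(d,\Omega,\alpha,\beta,q)$, blowing up as $q\downarrow d/(d-\beta+\alpha)$ in agreement with the strict inequality in (i). This dyadic/covering bookkeeping is the delicate step.

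For the continuity of $f\mapsto\nabla\M_{\Omega,\beta}f$ from $\ell^1(\Z^d)$ to $\ell^q(\Z^d)$, observe that the bounds above are quantitative and scale‑linear in $f$. If $f_k\to f$ in $\ell^1(\Z^d)$, each average defining $\M_{\Omega,\beta}f_k(n)$ converges, uniformly in $n$, to the corresponding average for $f$, so $\M_{\Omega,\beta}f_k\to\M_{\Omega,\beta}f$ and hence $\nabla\M_{\Omega,\beta}f_k\to\nabla\M_{\Omega,\beta}f$ pointwise on $\Z^d$. Lower semicontinuity of the $\ell^q$‑norm gives one inequality between the norms; the reverse bound $\limsup_k\|\nabla\M_{\Omega,\beta}f_k\|_{\ell^q}\le\|\nabla\M_{\Omega,\beta}f\|_{\ell^q}$ is obtained by showing that the $\ell^q$‑tails of $\nabla\M_{\Omega,\beta}f_k$ do not concentrate --- applying the quantitative estimates of the previous paragraphs to $f_k-f$ (suitably truncated) and using stability of the optimal radii along a subsequence --- after which a Brezis--Lieb type lemma upgrades pointwise convergence together with convergence of norms to convergence in $\ell^q(\Z^d)$. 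The uncentered operator $\widetilde{\M}_{\Omega,\beta}$ is treated identically; the only change is that in the pointwise comparison the ball is recentred without being centred at $n$, which affects none of the estimates.
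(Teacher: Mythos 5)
You should first note that this survey does not actually prove Theorem \ref{Thm6.4}: it is quoted from \cite{CMa}, so your argument must stand on its own. Much of it does. The reduction to $f\ge 0$, the attainment of the optimal radius, the pointwise bound $|\partial_i\M_{\Omega,\beta}f(n)|\le \M_{\Omega,\beta}(|\partial_i f|)(n)$, the deduction of part (i) for $\alpha=0$ (hence for all $q>d/(d-\beta)$, and, via part (ii) and $\|\nabla f\|_{\ell^1}\le 2d\|f\|_{\ell^1}$, for all $\beta\ge 1$) from the weak-type bound and the embedding $\ell^{p_0,\infty}(\Z^d)\hookrightarrow\ell^q(\Z^d)$, and the proof of part (ii) via the Kinnunen--Saksman pointwise bound, the discrete Sobolev inequality and $\ell^p$-interpolation are all correct, modulo routine verification of the discrete analogues of the continuous lemmas. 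You also correctly isolate the genuinely hard case: part (i) with $\beta<1$, $\alpha>0$ and $d/(d-\beta+\alpha)<q\le d/(d-\beta)$.

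It is exactly there that your sketch has a quantifiable gap. Your large-radius bound $|\nabla\M_{\Omega,\beta}f(n)|\le 2\|f\|_{\ell^1}N(0,r_n)^{\beta/d-1}$ is short by a factor of $(r_n+1)^{-1}$: if you feed it into a dyadic decomposition over the classes $E_j=\{n: N(0,r_n)\sim 2^j\}$, together with the available $\ell^1$-estimate $\sum_{n\in E_j}N(0,r_n)^{\beta/d-1}\sum_{m\in\overline\Omega(0,r_n)}|\nabla f(n+m)|\lesssim 2^{j\beta/d}\|\nabla f\|_{\ell^1}$ and $\|g\|_{\ell^q(E_j)}^q\le\|g\|_{\ell^\infty(E_j)}^{q-1}\|g\|_{\ell^1(E_j)}$, the geometric series has exponent $(\beta/d-1)(q-1)+\beta/d$, which is negative only for $q>d/(d-\beta)$ --- no improvement over the trivial range. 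The ingredient you discard on the grounds that ``$\M_{\Omega,\beta-1}$ is unavailable'' is in fact available for all $0\le\beta<d$ as a raw pointwise inequality with the explicit optimal radius: comparing the optimal $\Omega$-ball at one point with the $c$-inflated ball at its neighbour (which contains it, by convexity and $0\in{\rm int}(\Omega)$) and using $N(0,r+c)/N(0,r)=1+O(1/r)$ gives $|\partial_i\M_{\Omega,\beta}f(n)|\le C\,N(0,r_n)^{(\beta-1)/d-1}\sum_{m\in\overline\Omega(0,r_n+c)}|f(n+m)|$. Taking the geometric mean of this with your gradient bound, with weights $\alpha$ and $1-\alpha$, and running the same $\ell^\infty$--$\ell^1$ interpolation on each $E_j$ produces the exponent $((\beta-\alpha)/d-1)(q-1)+(\beta-\alpha)/d$, which is negative precisely when $q>d/(d-\beta+\alpha)$, and the sum over all $j\ge 0$ gives $C\big(\|\nabla f\|_{\ell^1}^{1-\alpha}\|f\|_{\ell^1}^{\alpha}\big)^q$ directly, with no truncation parameter $K$; this is essentially the mechanism of \cite{CMa}. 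Your proposed substitute --- a Vitali count of the centres at scale $2^j$ --- does not close as stated, since it needs a lower bound on $\M_{\Omega,\beta}f$ on $E_j$ (hence a further level-set decomposition), and even then the range of $q$ it reaches falls short of the theorem for $\alpha$ near $1$. Finally, the continuity claim is only gestured at: uniform convergence $\M_{\Omega,\beta}f_k\to\M_{\Omega,\beta}f$ is immediate from $|\M_{\Omega,\beta}f_k-\M_{\Omega,\beta}f|\le\|f_k-f\|_{\ell^1}$, but the upgrade to $\ell^q$ requires a uniform tail estimate for the family $\{\nabla\M_{\Omega,\beta}f_k\}$ that your Brezis--Lieb outline does not supply.
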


\noindent {\sc Remark:} Theorem \ref{Thm6.4} already brings some continuity statements. These shall be further discussed in the next section.

\smallskip

By a suitable dilation argument, in \cite{CMa} it is shown that inequality \eqref{Thm6.4_eq} can only hold if 
\begin{equation}\label{Eq_CM}
q \geq \frac{d}{d - \beta + \alpha}.
\end{equation}
The argument to show \eqref{Eq_CM} goes roughly as follows. Consider, for instance, the uncentered case where $\Omega = (-1,1)^d$ is the unit open cube. Let $k \in \N$ and consider the cube $Q_k = [-k,k]^d$ and its characteristic function $f_k := \chi_{Q_k}$. One has $\|f_k\|_{\ell^{1}(\Z^d)} \sim_d k^d$, $\|\nabla f_k\|_{\ell^1(\Z^d)} \sim_d k^{d-1}$ and $\|\nabla \wt{\M}_{\Omega,\beta}f_k\|_{\ell^q(\Z^d)} \gg_{\Omega,\beta,d} k^{\frac{d}{q}-1 + \beta}$. One can see this last estimate by considering the region $H = \{n = (n_1, n_2, \ldots, n_d) \in \Z^d; \,n_1 \geq 4dk\,; \ |n_i| \leq k,\, {\rm for}\,\, i = 2,3,\ldots, d\}$ and showing that the maximal function at $n \in H$ is realized by the cube of side $n_1 + k$ that contains the cube $Q_k$. Then we sum $|\wt{\M}_{\Omega,\beta}f_k (n + e_1) - \wt{\M}_{\Omega,\beta}f_k (n)|^q$ from $n_1 = 4dk$ to $\infty$, and then sum these contributions over the $\sim k^{d-1}$ possibilities for $(n_2, \ldots, n_d)$. Letting $k \to \infty$ we obtain the necessary condition \eqref{Eq_CM}. 

\smallskip

This leaves us the following open question.

\medskip

\noindent {\bf Question 11.} Let $0 \leq \beta <1$ and $q = d/(d-\beta)$. For a discrete function $f:\Z^d \to \R$ do we have $\|\nabla \M_{\Omega,\beta}f\|_{\ell^{q}(\Z^{d})}\leq C (d, \Omega,\beta) \,\|\nabla f\|_{\ell^{1}(\Z^{d})}$? More generally, does the inequality \eqref{Thm6.4_eq} hold for all $\alpha \leq \beta$ and $q = d/(d - \beta + \alpha)$? (Analogous questions for $\widetilde{\M}_{\Omega,\beta}$).

\section{Continuity}

We now turn to the final chapter of our discussion, in which we consider the continuity properties of the mappings we have addressed so far. The classical Hardy-Littlewood maximal operator $M$ is a sublinear operator, i.e. $M(f + g)(x) \leq Mf(x) + Mg(x)$ pointwise. Having this property at hand, it is easy to see that the fact that $M:L^p(\R^d) \to L^p(\R^d)$ is bounded (for $1 < p \leq \infty$) implies that this map is also continuous. In fact, if $f_j \to f$ in $L^p(\R^d)$, then
$$\|Mf_j - Mf\|_{L^p(\R^d)} \leq \|M(f_j - f)\|_{L^p(\R^d)} \leq C\,\|f_j - f\|_{L^p(\R^d)} \to 0.$$
Same reasoning applies to its uncentered, fractional or discrete versions (all being sublinear operators).

\smallskip

At the level of the (weak) derivatives, these operators, in principle, are not necessarily sublinear anymore. In light of the boundedness of the operator $M: W^{1,p}(\R^d) \to W^{1,p}(\R^d)$, for $1 < p \leq \infty$, established by Kinnunen, it is then a natural and nontrivial question to ask whether this operator is also continuous. This question is attributed to T. Iwaniec and was first explicitly posed in the work of Haj\l asz and Onninen \cite[Question 3]{HO}, in the case $1<p<\infty$. It was settled affirmatively by Luiro in \cite[Theorem 4.1]{Lu1}.
\begin{theorem}[Luiro, 2007 - cf. \cite{Lu1}]\label{Thm_cont_Luiro}
The operator $M: W^{1,p}(\R^d) \to W^{1,p}(\R^d)$ is continuous for $1 < p < \infty$.
\end{theorem}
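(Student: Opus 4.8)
The plan is to establish continuity of $M:W^{1,p}(\R^d)\to W^{1,p}(\R^d)$ by combining the already-known norm boundedness with a soft weak-convergence argument, upgrading weak convergence to strong convergence via a pointwise a.e. identification of the derivative of the limit maximal function. Concretely, suppose $f_j\to f$ in $W^{1,p}(\R^d)$. By sublinearity $Mf_j\to Mf$ in $L^p$, so the work is entirely at the level of gradients: we must show $\nabla Mf_j\to\nabla Mf$ in $L^p(\R^d)^d$. Since $\{f_j\}$ is bounded in $W^{1,p}$, Kinnunen's theorem gives that $\{Mf_j\}$ is bounded in $W^{1,p}$; by reflexivity (here $1<p<\infty$ is essential) every subsequence has a further subsequence with $Mf_{j_k}\rightharpoonup g$ weakly in $W^{1,p}$. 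Because $Mf_j\to Mf$ in $L^p$, necessarily $g=Mf$, so in fact $\nabla Mf_j\rightharpoonup\nabla Mf$ weakly along the whole sequence. The remaining task is to promote weak convergence of the gradients to strong convergence, for which it suffices (in a uniformly convex space) to show $\|\nabla Mf_j\|_{L^p}\to\|\nabla Mf\|_{L^p}$, i.e. no loss of mass.

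**The heart of the argument** — and the main obstacle — is proving this convergence of the gradient norms, which requires understanding \emph{at each point $x$} which radius (or sequence of radii) realizes, or nearly realizes, the supremum defining $Mf(x)$, and showing this structure is stable under perturbation of $f$. Luiro's key technical input is a formula for $\nabla Mf(x)$ in terms of the ``maximizing balls'' at $x$: if $B(x,r_x)$ is an optimal ball, then $\nabla Mf(x)$ equals the average of $\nabla f$ over that ball (when $r_x>0$), or $\nabla f(x)$ itself (when $r_x=0$). One then studies the set-valued map sending $x$ to the collection of its maximizing radii and shows an upper-semicontinuity property: if $f_j\to f$ in $W^{1,p}$ and $x_j\to x$, then maximizing radii for $f_j$ at $x_j$ accumulate only at maximizing radii for $f$ at $x$. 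This is where the analysis is delicate — one must rule out escape of mass to $r=\infty$ and handle the $r=0$ boundary case, using the $L^p$-convergence $f_j\to f$ together with the gradient convergence to control the averages uniformly.

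**Assembling the pieces**, one argues by contradiction: if $\nabla Mf_j\not\to\nabla Mf$ in $L^p$, then along a subsequence $\|\nabla Mf_j - \nabla Mf\|_{L^p}\geq\varepsilon>0$, yet by the norm-convergence (via the maximizing-ball formula and Fatou/dominated-convergence arguments applied pointwise) one forces $\|\nabla Mf_j\|_{L^p}\to\|\nabla Mf\|_{L^p}$; combined with weak convergence $\nabla Mf_j\rightharpoonup\nabla Mf$ and uniform convexity of $L^p$, this yields strong convergence, a contradiction. The pointwise control needed for the norm convergence comes from dominating $|\nabla Mf_j(x)|\leq M(\nabla f_j)(x)$ (Kinnunen's bound), noting $M(\nabla f_j)\to M(\nabla f)$ in $L^p$ by sublinearity, and extracting a.e.-convergent subsequences so that one can pass to the limit inside the maximizing-ball formula using the upper semicontinuity of the radius map. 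The genuinely hard step is this last stability statement for the maximizing balls, which is what makes the argument substantially deeper than the mere boundedness in Theorem~\ref{Thm_Kin}; the failure of reflexivity is also exactly why the method says nothing about the endpoint $p=1$.
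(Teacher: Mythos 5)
Your outline follows the same route as Luiro's argument in \cite{Lu1}, which is also exactly what this survey describes (a study of the sets of ``good radii'', an explicit formula for $\nabla Mf$ obtained by moving the derivative inside the integral over a maximizing ball, and crucial use of the $L^p$-boundedness of $M$); note that the paper itself only cites \cite{Lu1} and contains no proof to compare against line by line. Your functional-analytic scaffolding is sound: sublinearity gives $Mf_j \to Mf$ in $L^p$, Theorem~\ref{Thm_Kin} plus reflexivity upgrades this to $\nabla Mf_j \rightharpoonup \nabla Mf$ weakly along the whole sequence, and either the Radon--Riesz property or (closer to Luiro) a generalized dominated convergence argument with dominating sequence $M(|\nabla f_j|)\to M(|\nabla f|)$ in $L^p$ would close the argument once pointwise a.e.\ convergence of the gradients, or convergence of their norms, is in hand.

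However, as written this is a plan rather than a proof: the two statements carrying essentially all of the content are asserted, not established. First, the derivative formula itself is a theorem requiring proof: for a.e.\ $x$ and every $r$ in the set $R(f,x)$ of maximizing radii one has that $\nabla Mf(x)$ equals the average of $\nabla |f|$ (not $\nabla f$ --- the operator acts on $|f|$) over $B(x,r)$, and equals $\nabla |f|(x)$ when $0\in R(f,x)$; proving it requires showing that $R(f,x)$ is nonempty and compact (maximizing radii cannot escape to infinity because $f\in L^p$ with $p<\infty$ forces the averages to decay), that the formula is independent of the choice of $r\in R(f,x)$, and a genuine differentiation-under-the-supremum argument. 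Second, the stability statement --- if $f_j\to f$ in $W^{1,p}$ and $r_j\in R(f_j,x)$, then every accumulation point of $\{r_j\}$ lies in $R(f,x)$, with uniform control near $r=0$ and $r=\infty$ --- is the step you yourself label as ``genuinely hard'' and then do not supply. Without these two ingredients the contradiction/norm-convergence endgame has nothing to act on. In short: correct strategy, matching the cited proof, but the core of the argument is missing.
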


\noindent{\sc Remark:} In the case $p= \infty$, the continuity of $M: W^{1,\infty}(\R^d) \to W^{1,\infty}(\R^d)$ does not hold, as pointed out to the author by H. Luiro. A counterexample may be constructed along the following lines (in dimension $d=1$, say). Take a smooth $f$ with compact support such that $(Mf)'$ has a point of discontinuity. Letting $f_h(x) = f(x + h)$, one sees that $f_h \to f$ in $W^{1,\infty}(\R)$ as $h \to 0$, but $(M(f_h))' =(Mf)'_h \nrightarrow (Mf)'$ in $L^{\infty}(\R)$ as $h \to 0$.

\smallskip

The proof of Luiro for Theorem \ref{Thm_cont_Luiro} is very elegant. It provides an important qualitative study of the convergence properties of the sets of ``good radii" (i.e. the radii that realize the supremum in the definition of the maximal function) and establishes an explicit formula for the derivative of the maximal function (in which one is able to move the derivative inside the integral over a ball of good radius). It also uses crucially the $L^p$-boundedness of the maximal operator. A similar study of the continuity properties of the local maximal operator on subdomains of $\R^d$ was also carried out by Luiro in \cite{Lu2}.

\smallskip

\subsection{Endpoint study} As we have done many times before in this paper, we now turn our attention to the endpoint $p=1$. So far, we have established several boundedness results at $p=1$, and we now want to ask ourselves if such maps are continuous. For instance, the very first one of such boundedness results is Tanaka's inequality \eqref{Tanaka_boy} that establishes that $f \mapsto (\widetilde{M}f)'$ is bounded from $W^{1,1}(\R)$ to $L^1(\R)$. The corresponding continuity question is: if $f_j \to f \in W^{1,1}(\R)$ as $j \to \infty$, do we have $(\widetilde{M}f_j)' \to (\widetilde{M}f)'$ in $L^1(\R)$ as $j \to \infty$? This was settled affirmatively in \cite[Theorem 1]{CMP}.
\begin{theorem}[Carneiro, Madrid and Pierce, 2017 - cf. \cite{CMP}]\label{CMP_Thm1}
The map $f \mapsto \big(\widetilde{M}f\big)'$ is continuous from $W^{1,1}(\R)$ to $L^1(\R)$.
\end{theorem}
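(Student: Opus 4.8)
The plan is to establish the continuity of $f \mapsto (\widetilde{M}f)'$ as a consequence of the boundedness in Theorem~\ref{Thm_APL} combined with a careful analysis of how the maximal function and its detachment set behave under $W^{1,1}$ perturbations. First I would reduce to a local statement: since $f_j \to f$ in $W^{1,1}(\R)$, we may pass to a subsequence so that $f_j \to f$ pointwise a.e.\ and $f_j' \to f'$ in $L^1$, and it suffices to prove that \emph{every} such subsequence has a further subsequence along which $(\widetilde{M}f_j)' \to (\widetilde{M}f)'$ in $L^1(\R)$. A key preliminary observation is that $\widetilde{M}f_j \to \widetilde{M}f$ uniformly on $\R$ (this follows from $\|f_j - f\|_\infty \to 0$, since $W^{1,1}(\R) \hookrightarrow C_0(\R)$ and the uncentered average is a contraction in sup norm), and moreover $\widetilde{M}f_j \to \widetilde{M}f$ in $L^1_{loc}$ with control on the tails.

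The heart of the argument is to exploit the structure revealed by Aldaz and P\'{e}rez L\'{a}zaro: on the detachment set $\{\widetilde{M}f > |f|\}$, which is open, the function $\widetilde{M}f$ has no strict local maxima, so on each connected component it is monotone or ``V-shaped'' — in fact one shows $|(\widetilde{M}f)'| = |f'|$ a.e.\ on the contact set $\{\widetilde{M}f = |f|\}$, while on the detachment set $\widetilde{M}f$ is locally the maximal function of a restricted datum and one has good one-sided control. The plan is to split $\R$ into the contact set and the detachment set for $f$, handle the contact set using $|(\widetilde{M}f)'|=|f'|$ there together with $L^1$-convergence of $f_j'$, and on the detachment set use the variation identity: by Theorem~\ref{Thm_APL}, $\var(\widetilde{M}f_j) \le \var(f_j) \to \var(f) = \var(\widetilde{M}f)$, so there is no loss of variation in the limit. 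Combined with the pointwise (indeed uniform) convergence $\widetilde{M}f_j \to \widetilde{M}f$, this forces the derivatives to converge in $L^1$: a sequence of functions converging pointwise with convergent (and hence, in the limit, non-increasing-to-the-right) total variations, whose limit has the same total variation, must converge in $BV$-weak-$*$ in a way that upgrades to $L^1$ convergence of derivatives once one rules out oscillation. Concretely I would invoke a lemma to the effect that if $g_j \to g$ uniformly, each $g_j$ and $g$ are absolutely continuous, and $\var(g_j) \to \var(g)$, then $g_j' \to g'$ in $L^1$ — this is a Radon measure statement: $g_j' \,dx \rightharpoonup g'\,dx$ weakly-$*$ and $|g_j'|(\R) \to |g'|(\R)$, which gives convergence in total variation norm of the measures, hence $L^1$ convergence of the densities.

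The main obstacle will be justifying the weak-$*$ convergence $(\widetilde{M}f_j)'\,dx \rightharpoonup (\widetilde{M}f)'\,dx$ as measures \emph{with the correct total mass}, i.e.\ promoting the inequality $\limsup_j \var(\widetilde{M}f_j) \le \var(\widetilde{M}f)$ (which is what semicontinuity plus Theorem~\ref{Thm_APL} give, using $\var(f_j)\to\var(f)$) to an equality, and then ensuring there is no cancellation hidden in the vanishing of $\var$ on pieces of the detachment set. The subtlety is that $\widetilde{M}f_j$ and $\widetilde{M}f$ may have detachment sets that differ substantially — a small perturbation of $f$ can create or destroy large detached regions — so one cannot naively compare the derivatives region by region. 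The resolution is to use the absolute continuity of $\widetilde{M}f$ (again from Theorem~\ref{Thm_APL}) together with a uniform integrability argument: one shows $\{(\widetilde{M}f_j)'\}_j$ is uniformly integrable, which combined with pointwise a.e.\ convergence of $(\widetilde{M}f_j)'$ to $(\widetilde{M}f)'$ (established via Luiro-type formulas for the derivative on the detachment set and the identity on the contact set, plus the convergence of good radii) yields $L^1$ convergence by Vitali's theorem. Proving pointwise a.e.\ convergence of the derivatives and their uniform integrability — which is where the delicate analysis of good radii and the no-local-maxima property must be used quantitatively — is the technical crux of the argument.
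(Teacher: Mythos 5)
Your scaffolding (pass to a subsequence, note uniform convergence of $\widetilde{M}f_j$ to $\widetilde{M}f$, split into contact and detachment sets, finish with Vitali) is sensible, but the two steps you lean on most heavily are, respectively, unjustified and false. First, lower semicontinuity of the variation under pointwise convergence together with Theorem \ref{Thm_APL} give only $\var(\widetilde{M}f) \le \liminf_j \var(\widetilde{M}f_j) \le \limsup_j \var(\widetilde{M}f_j) \le \lim_j \var(f_j) = \var(f)$; since in general $\var(\widetilde{M}f) < \var(f)$ strictly (the operator genuinely smooths), this does \emph{not} yield $\var(\widetilde{M}f_j) \to \var(\widetilde{M}f)$ --- your claimed bound $\limsup_j \var(\widetilde{M}f_j) \le \var(\widetilde{M}f)$ is not what semicontinuity plus Theorem \ref{Thm_APL} give, and proving the convergence of variations is essentially as hard as the theorem itself. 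Second, even granting that convergence, the lemma you invoke is false: uniform convergence of absolutely continuous $g_j \to g$ with $\var(g_j)\to\var(g)$ gives only \emph{strict} convergence of the measures $g_j'\,\dx$ (weak-$*$ plus convergence of total masses), not convergence in total variation norm. For instance, let $g(x)=x$ on $[0,1]$ and let $g_j$ be piecewise linear with slopes alternating between $0$ and $2$ on intervals of length $2^{-j}$ (both suitably truncated outside $[0,1]$): all these functions are nondecreasing with equal variation on $[0,1]$ and $g_j\to g$ uniformly, yet $\|g_j'-g'\|_{L^1[0,1]}=1$ for every $j$. Nothing in your argument excludes this kind of oscillation of $(\widetilde{M}f_j)'$ around $(\widetilde{M}f)'$.

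Your fallback --- pointwise a.e.\ convergence of the derivatives via Luiro-type formulas and convergence of good radii, plus uniform integrability, then Vitali --- is precisely the crux, and you leave both ingredients unproved; moreover, the good-radii machinery behind Theorem \ref{Thm_cont_Luiro} uses the $L^p$-boundedness of the maximal operator for $p>1$ in an essential way, which is exactly what is unavailable at $p=1$. This is the point the survey emphasizes: the proof in \cite{CMP} is ``different from Luiro's approach \dots\ since one does not have the $L^1$-boundedness of the maximal operator.'' The actual argument does not pass through convergence of total variations or a soft measure-theoretic lemma; it is a direct quantitative analysis of $\widetilde{M}f_j-\widetilde{M}f$ built on the one-sided maximal functions and the structure of $\widetilde{M}f$ on the disconnecting set (no local maxima there, hence a monotone or V-shaped profile on each component), with explicit control of the components that are created or destroyed by the perturbation. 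Until you can prove a localized, $j$-uniform bound for $\int_E \big|(\widetilde{M}f_j)'\big|$ in terms of $f_j$ on an enlargement of $E$, together with genuine a.e.\ convergence of the derivatives, the proposal remains a plan with its central steps missing.
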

The proof of this result is quite subtle, and different from Luiro's approach to Theorem \ref{Thm_cont_Luiro} since one does not have the $L^1$-boundedness of the maximal operator. The authors in \cite{CMP} develop a fine analysis towards the required convergence using the qualitative description of the uncentered maximal function (and the one-sided maximal functions) on the disconnecting set. The corresponding question for the one-dimensional centered maximal function $M$ is even more challenging and it is currently open.

\medskip

\noindent {\bf Question 12.} Is the map $f \mapsto (Mf)'$ is continuous from $W^{1,1}(\R)$ to $L^1(\R)$?

\medskip

In light of inequalities \eqref{Var_Sharp} and \eqref{Var_Sharp_centered}, one may ask similar (and harder) continuity questions on the Banach space of normalized functions of bounded variation. Throughout the rest of this section let us denote by $BV(\R)$ the (Banach) space of functions $f:\R \to \R$ of bounded total variation with norm
$$\|f\|_{BV(\R)} = |f(-\infty)| + \var(f),$$
where $f(-\infty) = \lim_{x\to -\infty}f(x)$. Since 
$$\|\widetilde{M}f\|_{L^{\infty}(\R)} \leq \|f\|_{L^{\infty}(\R)} \leq \|f\|_{BV(\R)},$$
together with \eqref{Var_Sharp} we see that $\widetilde{M}: BV(\R) \to BV(\R)$ is bounded. The same holds for $M: BV(\R) \to BV(\R)$. The corresponding continuity statements arise as interesting open problems that would be qualitatively stronger then Theorem \ref{CMP_Thm1} or Question 12, if confirmed.

\medskip

\noindent {\bf Question 13.} Is the map $\widetilde{M}: BV(\R) \to BV(\R)$ continuous?

\medskip

\noindent {\bf Question 14.} Is the map $M: BV(\R) \to BV(\R)$ continuous?

\medskip

\subsection{Fractional setting} We now move the endpoint discussion to the fractional setting as considered in Section \ref{Sec5}. As in the classical setting considered above, we may think of the endpoint continuity questions assuming a (stronger) $W^{1,1}(\R)$ convergence or a (weaker) $BV(\R)$ convergence on the source space. With respect to the first type, the corresponding continuity statement to Theorem \ref{Thm5.2} was established in \cite{M2}.
\begin{theorem}[Madrid, 2018 - cf. \cite{M2}]\label{Thm7.3}
Let $0< \beta < 1$ and $q = 1/(1-\beta)$. The map $f \mapsto \big(\wt{M}_{\beta}f\big)'$ is continuous from $W^{1,1}(\R)$ to $L^q(\R)$.
\end{theorem}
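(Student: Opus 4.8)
The plan is to follow the template that has proved successful for the endpoint continuity statements already discussed (Theorem~\ref{CMP_Thm1}), namely: upgrade the a.e.\ convergence of the derivatives to $L^q$-convergence by also proving convergence of the $L^q$-norms, and then invoke the uniform convexity of $L^q$ for $1<q<\infty$. Concretely, suppose $f_j\to f$ in $W^{1,1}(\R)$. Since the conclusion is a statement about convergence in a metric space, it suffices to show that every subsequence of $\{(\wt{M}_{\beta}f_j)'\}$ has a further subsequence converging to $(\wt{M}_{\beta}f)'$ in $L^q(\R)$; so, after relabelling, we may assume in addition that $f_j\to f$ uniformly (via the embedding $W^{1,1}(\R)\hookrightarrow C_0(\R)$), that $f_j'\to f'$ pointwise a.e., and that $|f_j'|\le h$ for a fixed $h\in L^1(\R)$. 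By Theorem~\ref{Thm5.2} each $\wt{M}_{\beta}f_j$ is absolutely continuous with $\|(\wt{M}_{\beta}f_j)'\|_{L^q(\R)}=\var_q(\wt{M}_{\beta}f_j)$, and likewise for $f$. The two ingredients I would establish are: (i) $(\wt{M}_{\beta}f_j)'(x)\to(\wt{M}_{\beta}f)'(x)$ for a.e.\ $x\in\R$; and (ii) $\var_q(\wt{M}_{\beta}f_j)\to\var_q(\wt{M}_{\beta}f)$. Granting (i) and (ii), the Brezis--Lieb lemma (a.e.\ convergence plus convergence of $L^q$-norms forces norm convergence when $1<q<\infty$) gives $(\wt{M}_{\beta}f_j)'\to(\wt{M}_{\beta}f)'$ in $L^q(\R)$, as desired.

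For (i), the first point is that, because $\beta>0$, the ``disconnecting set'' used in the $\beta=0$ theory is ill-posed, so one works directly with optimal intervals, as in \cite{CMa}. The useful structural feature of the genuinely fractional regime is that for $g\in W^{1,1}(\R)$ with $g\not\equiv0$ the supremum defining $\wt{M}_{\beta}g(x)$ is, at every point $x$, attained on some interval $I_x=[a_x,b_x]$ of strictly positive and finite length: degenerate intervals give value $0$ since $g$ is continuous, and intervals of length tending to $\infty$ give value tending to $0$ since $g\in L^1(\R)$, so by a compactness argument on the parameter region $\{a\le x\le b\}$ the supremum is realized in the interior. One then selects (say) the longest optimal interval at each point, checks upper semicontinuity of this selection, and derives an explicit formula for $(\wt{M}_{\beta}g)'(x)$ in terms of $g(a_x)$, $g(b_x)$ and $|I_x|$ by an envelope-type differentiation of $I\mapsto |I|^{-(1-\beta)}\int_I|g|$ in $x$ (the endpoint-variation terms drop out at an interior optimizer; on the set where the optimizer is one-sided one argues via the one-sided fractional maximal functions, as in \cite{CMP}). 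Using $f_j\to f$ uniformly and $f_j'\to f'$ a.e., one shows the optimal intervals for $f_j$ converge at a.e.\ $x$ to those for $f$ — the fractional analogue of Luiro's ``good radii'' analysis — and (i) follows by plugging these into the explicit formulas.

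The main obstacle is (ii), specifically the inequality $\limsup_j\var_q(\wt{M}_{\beta}f_j)\le\var_q(\wt{M}_{\beta}f)$; the reverse is just lower semicontinuity of $\var_q$ under pointwise convergence. The boundedness statement \eqref{Intro_q_bound1} is useless for this, since its constant $8^{1/q}$ exceeds $1$. Instead I would revisit the bootstrapping scheme of \cite{CMa}: decompose $\R$ into the transitional pieces where the optimal interval $I_x$ genuinely moves and the ``frozen'' pieces where it is locally constant, bound $\var_q$ of $\wt{M}_{\beta}f_j$ on each piece by the variation of $f_j$ over a slightly enlarged but comparable interval, and sum, tracking that the total loss stays controlled by $\var(f_j)$ on a bounded enlargement. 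Passing to the limit then combines $\var(f_j)=\|f_j'\|_{L^1}\to\|f'\|_{L^1}=\var(f)$, the a.e.\ convergence from (i), and a tightness estimate: the functions $(\wt{M}_{\beta}f_j)'$ do not leak mass to infinity, because the uniform bound on $\|f_j\|_{L^1(\R)}$ forces $\wt{M}_{\beta}f_j(x)\to0$ as $|x|\to\infty$ at a rate uniform in $j$. Matching the local upper bounds with the local lower-semicontinuity bounds on each piece, and then shrinking the enlargements, yields (ii). I expect the delicate points to be the uniformity in $j$ of the interval-convergence in (i) near the exceptional set where optimizers fail to be unique, and the combinatorial bookkeeping in (ii) that prevents the comparison constants from accumulating over the decomposition.
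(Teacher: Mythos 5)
A preliminary remark: this survey does not actually prove Theorem \ref{Thm7.3}; the statement is quoted from Madrid's paper \cite{M2}, so your proposal can only be measured against the arguments there and in \cite{CMP,CMa}. Your overall reduction is sound --- it does suffice to prove (i) a.e.\ convergence of $(\wt{M}_{\beta}f_j)'$ to $(\wt{M}_{\beta}f)'$ and (ii) convergence of the $L^q$-norms, and then invoke Brezis--Lieb --- and your sketch of (i) (attainment of the supremum on nondegenerate intervals when $\beta>0$, upper semicontinuity of the optimal-interval correspondence, a Luiro-type derivative formula, one-sided operators where the optimizer is one-sided) is the right adaptation of \cite{Lu1,CMa,CMP}, modulo the usual care at points where the optimizer is not unique.

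The genuine gap is in (ii). To prove $\limsup_j \var_q(\wt{M}_{\beta}f_j) \le \var_q(\wt{M}_{\beta}f)$ you propose to bound $\var_q(\wt{M}_{\beta}f_j)$ on each piece of a decomposition by $C\,\var(f_j)$ over a comparable enlargement and then to ``match'' these with local lower-semicontinuity bounds. This cannot close as described: the upper bounds compare $\wt{M}_{\beta}f_j$ to $f_j$, locally with the same lossy constant $C>1$ as the global bound of \cite{CMa}, whereas the quantity they must not exceed in the limit is $\var_q(\wt{M}_{\beta}f)$ on the same piece; there is no lower bound of the form $\var_q(\wt{M}_{\beta}f;P)\gtrsim \var(f;P^*)$ that would let the two families of estimates cancel, and summing your local upper bounds merely reproves $\limsup_j \var_q(\wt{M}_{\beta}f_j)\le 8^{1/q}\var(f)$, which you already noted is useless. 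What is actually needed to upgrade (i) to $L^q$-convergence is a Vitali-type argument: uniform (in $j$) integrability of $|(\wt{M}_{\beta}f_j)'|^q$ over sets of small measure, plus tightness at infinity. Your proposal does not address the first point at all, and your justification of the second --- that $\wt{M}_{\beta}f_j(x)\to 0$ as $|x|\to\infty$ at a rate uniform in $j$ --- establishes nothing about $\int_{|x|>R}|(\wt{M}_{\beta}f_j)'|^q\,\dx$, since a uniformly small function can still have arbitrarily large $q$-variation. Both points must instead be extracted from a genuinely local version of the variation estimate of \cite{CMa} combined with the uniform integrability and tightness of $\{f_j'\}$ in $L^1(\R)$ (which do hold, since $f_j'\to f'$ in $L^1(\R)$), with particular care on the pieces where the optimal intervals are long; this quantitative step is the real content of the proof in \cite{M2} and is absent from your sketch.
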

The analogous continuity question for the centered one-dimensional fractional maximal operator, for which the boundedness is not yet known (see Question 7 above), is an interesting open problem.

\medskip

\noindent {\bf Question 15.} Let $0< \beta < 1$ and $q = 1/(1-\beta)$. Is the map $f \mapsto (M_{\beta}f)'$ bounded and continuous from $W^{1,1}(\R)$ to $L^q(\R)$?

\medskip

With respect to the second type of continuity statement, in which one assumes the $BV(\R)$-convergence on the source space, the interesting fact is that the fractional endpoint maps {\it are not continuous.} This was shown in \cite[Theorems 3 and 4]{CMP} and we quote the results below.

\begin{theorem}[Carneiro, Madrid and Pierce, 2017 - cf. \cite{CMP}]\label{Thm7.4}
Let $0< \beta < 1$ and $q = 1/(1-\beta)$.
\begin{itemize}
\item[(i)] (uncentered case) The map $f \mapsto \big(\wt{M}_{\beta}f\big)'$ is not continuous from $BV(\R)$ to $L^q(\R)$, i.e. there is a sequence $\{f_j\}_{j\geq 1} \subset BV(\R)$ and a function $f \in BV(\R)$ such that $\|f_j - f\|_{BV(\R)} \to 0$ as $j \to \infty$ but 
$$\big\|\big(\wt{M}_{\beta}f_j\big)' - \big(\wt{M}_{\beta}f\big)'\big\|_{L^q(\R)} = \var_q \big(\wt{M}_{\beta}f_j - \wt{M}_{\beta}f\big) \nrightarrow 0$$ 
as $j \to \infty$.

\smallskip

\item[(ii)] (centered case) There is a sequence $\{f_j\}_{j\geq 1} \subset BV(\R)$ and a function $f \in BV(\R)$ such that $\|f_j - f\|_{BV(\R)} \to 0$ as $j \to \infty$ but $\var_q \big(M_{\beta}f_j - M_{\beta}f\big) \nrightarrow 0$ as $j \to \infty$.
\end{itemize}
\end{theorem}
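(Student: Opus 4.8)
The plan is to construct explicit counterexamples exploiting the failure of $\widetilde{M}_{\beta}$ (and $M_{\beta}$) to "see" small perturbations in a $BV$-continuous way, precisely because the scaling in the fractional average amplifies the contribution of mass concentrated on small sets. First I would take the limiting function $f$ to be a simple normalized $BV$ function — for instance a single jump, $f = \one_{[0,\infty)}$, or a bump like $\one_{[0,1]}$ — and then perturb it by adding a sequence of increasingly narrow, increasingly tall spikes $g_j$ whose $BV$-norm tends to zero. Concretely one sets $f_j = f + g_j$ where $g_j$ is (a smoothing of) $\varepsilon_j \one_{[a_j, a_j + \delta_j]}$ with $\varepsilon_j \to 0$; since $\|g_j\|_{BV(\R)} \sim \varepsilon_j \to 0$, we have $f_j \to f$ in $BV(\R)$. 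The point is to choose $\delta_j$ shrinking much faster than $\varepsilon_j$ decays (say $\delta_j = \varepsilon_j^{N}$ for large $N$), so that although the spike is negligible in $BV$, the fractional maximal operator, which weights an interval of length $\ell$ by $\ell^{\beta}$ rather than $\ell$, produces from this spike a bump in $\widetilde{M}_{\beta} f_j$ that does \emph{not} vanish in $\var_q$.

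The key computation is to show that $\var_q(\widetilde{M}_\beta f_j - \widetilde{M}_\beta f)$ stays bounded away from zero. Near the spike, $\widetilde{M}_\beta f_j(x)$ is at least the fractional average over a tiny interval essentially equal to the spike, which is of order $\varepsilon_j \delta_j \cdot \delta_j^{-(1-\beta)} = \varepsilon_j \delta_j^{\beta}$ — but more importantly one should look at the \emph{transition region} where the maximizing interval interpolates between the tiny spike-dominated scale and the larger scale dictated by $f$. On that transition region $\widetilde{M}_\beta f_j$ must climb from roughly $\widetilde{M}_\beta f$ up to the spike-induced value and back down, and the relevant quantity is not the \emph{height} of this excursion but its $q$-variation, $q = 1/(1-\beta)$, which is exactly the Riesz exponent matched to the operator. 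By the sharp calibration $q(1-\beta) = 1$, the contribution of a bump of height $H$ realized over a length scale $L$ to $\var_q$ is of order $H \cdot L^{-(q-1)/q} = H L^{-\beta}$, and one checks that along the sequence this stays $\gtrsim c > 0$ independently of $j$ while $\widetilde{M}_\beta f$ contributes nothing new there. Thus $\widetilde{M}_\beta f_j$ does not converge to $\widetilde{M}_\beta f$ in the $\var_q$ (equivalently $L^q$-of-derivative) sense, proving (i). For (ii), the same spike construction works for the centered operator $M_\beta$: the centered fractional average at points near $a_j$ still detects the spike, and the transition analysis is essentially identical (if anything easier, since one need not track off-center intervals), giving the required sequence for the centered case.

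I would organize the write-up as: (1) fix $f$ and define the perturbed sequence $\{f_j\}$, verifying $\|f_j - f\|_{BV(\R)} \to 0$; (2) localize — show that outside a small neighborhood of the spikes, $\widetilde{M}_\beta f_j$ and $\widetilde{M}_\beta f$ are uniformly close, so all the action is near $a_j$; (3) on that neighborhood, identify the maximizing interval in the definition of $\widetilde{M}_\beta f_j$ at each relevant point and obtain matching upper and lower bounds for the resulting bump profile; (4) compute the $q$-variation of that profile and show it is bounded below by an absolute constant; (5) repeat (2)–(4) for $M_\beta$. The main obstacle I expect is step (3)–(4): correctly identifying which interval realizes the supremum across the whole transition region (this is where one must be careful that the operator genuinely interpolates between scales rather than jumping), and then extracting a \emph{lower} bound on $\var_q$ of the difference $\widetilde{M}_\beta f_j - \widetilde{M}_\beta f$ — lower bounds on variation require producing an explicit partition witnessing the oscillation, which is more delicate than the upper bounds used in Theorem \ref{Thm5.2}. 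One must also ensure the perturbations $g_j$ are honest $BV$ (or even $W^{1,1}$-smooth) functions and that the smoothing does not destroy the spike's effect on the fractional average, but as noted in the discussion after Theorem \ref{Thm_Ramos} this kind of smoothing can be carried out with no harm.
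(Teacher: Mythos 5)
This survey states Theorem \ref{Thm7.4} without proof, citing \cite{CMP}, so your proposal has to be judged on its own merits, and unfortunately its central mechanism is backwards. For $0<\beta<1$ the fractional average over an interval of length $\ell$ equals $\ell^{\beta}$ times the ordinary average, so $\wt{M}_{\beta}$ \emph{penalizes} mass concentrated on short intervals rather than amplifying it: your spike $g_j=\varepsilon_j\one_{[a_j,a_j+\delta_j]}$ satisfies $\|\wt{M}_{\beta}g_j\|_{L^{\infty}(\R)}=\varepsilon_j\delta_j^{\beta}\ll\varepsilon_j$, and by sublinearity $0\le \wt{M}_{\beta}f_j-\wt{M}_{\beta}f\le \wt{M}_{\beta}g_j$ pointwise. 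Feeding this into your own heuristic, the excursion you hope to exploit has height $H\le\varepsilon_j\delta_j^{\beta}$ over scales $L\ge\delta_j$, so $HL^{-\beta}\le\varepsilon_j\to0$; the assertion that ``one checks that along the sequence this stays $\gtrsim c>0$'' is exactly the step that fails, and Theorem \ref{Thm5.2} already forces $\var_q(\wt{M}_{\beta}g_j)\le 8^{1/q}\var(g_j)=O(\varepsilon_j)\to0$. There is also a structural obstruction: your perturbation is small in $L^1$ as well as in $BV$ ($\|g_j\|_{L^1(\R)}=\varepsilon_j\delta_j$), so after smoothing $f_j\to f$ in $W^{1,1}(\R)$, a regime in which Theorem \ref{Thm7.3} \emph{guarantees} $(\wt{M}_{\beta}f_j)'\to(\wt{M}_{\beta}f)'$ in $L^q(\R)$. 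Any counterexample must live precisely in the gap between $BV$ and $W^{1,1}$ convergence, i.e.\ use perturbations with vanishing variation but enormous $L^1$ mass --- the opposite of a thin spike.

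The mechanism that actually works (and is the one behind \cite[Theorems 3 and 4]{CMP}) is a low, extremely wide plateau interacting nonlinearly with a fixed bump. Take $f=\one_{[0,1]}$ and $f_j=f+\varepsilon_j\one_{[0,R_j]}$ with $\varepsilon_j\to0$ and $\varepsilon_j R_j^{\beta}\to\infty$; then $\|f_j-f\|_{BV(\R)}=2\varepsilon_j\to0$, while a direct computation with intervals $[0,\ell]$ shows that on $(1,R_j)$ one has $\wt{M}_{\beta}f_j(x)=\max\bigl(x^{\beta-1}+\varepsilon_j x^{\beta},\,R_j^{\beta-1}+\varepsilon_j R_j^{\beta}\bigr)$, which for large $j$ is identically the constant $R_j^{\beta-1}+\varepsilon_j R_j^{\beta}$. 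The huge plateau thus \emph{flattens} $\wt{M}_{\beta}f_j$ and erases the tail profile $\wt{M}_{\beta}f(x)=x^{\beta-1}$ on $(1,R_j)$, so the difference satisfies $\bigl(\wt{M}_{\beta}f_j-\wt{M}_{\beta}f\bigr)'(x)=(1-\beta)x^{\beta-2}$ there, and since $(\beta-2)q<-1$ its $L^q(1,R_j)$ norm converges to a positive constant independent of $j$. This gives the required lower bound on $\var_q\bigl(\wt{M}_{\beta}f_j-\wt{M}_{\beta}f\bigr)$ without any delicate identification of maximizing intervals, and the same example (with routine adjustments for which centered interval is optimal) handles the centered case (ii). I would encourage you to rebuild the argument around this ``mass spreading to infinity'' phenomenon, which is invisible to any spike-based construction.
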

Notice the slightly different wording in the items (i) and (ii) of the theorem above. The reason is that in the centered case we do not know yet if the analogue of Theorem \ref{Thm5.2} holds. Theorem \ref{Thm7.4} (ii) says that, regardless of the map $f \mapsto (M_{\beta}f)'$ being bounded from $BV(\R)$ to $L^q(\R)$ or not, it is not continuous.

\subsection{Discrete setting} To consider similar continuity issues in the discrete setting we define the Banach space $BV(\Z)$ as the space of functions $f:\Z \to \R$ of bounded total variation with norm
\begin{equation*}
\|f\|_{{\rm BV(\Z)}} = \big|f(-\infty)\big| + \var(f),
\end{equation*}
where $f(-\infty):= \lim_{n\to -\infty} f(n)$. Recall the discussion on the beginning of Section \ref{Sec6} in which we said that there is no actual space $W^{1,1}(\Z)$, as this is simple $\ell^1(\Z)$ with a different norm. Then, in the instances where we assumed a $W^{1,1}(\R)$-convergence in the continuous setting, we will be assuming an $\ell^1(\Z)$-convergence in the discrete setting. As a particular case of the general framework of Theorem \ref{Thm6.4} above (which is \cite[Theorem 3]{CMa}) we see that the maps $f \mapsto (\M_{\beta}f)'$ and $f \mapsto (\wt{\M}_{\beta}f)'$ are continuous from $\ell^1(\Z)$ to $\ell^q(\Z)$ for $0 \leq \beta <1$ and $q = 1/(1-\beta)$ (the case $\beta = 0$ of these results had previously been obtained in \cite{CH}). Therefore, we have an affirmative answer for the discrete analogues of Theorems \ref{CMP_Thm1} and \ref{Thm7.3} and Questions 12 and 15.

\smallskip

The $BV(\Z)$-continuity is a much more interesting issue. For the classical discrete Hardy-Littlewood maximal operators, we can affirmatively answer the analogues of Questions 13 and 14. This was accomplished in \cite[Theorem 2]{CMP} for the uncentered case and in \cite[Theorem 1.2]{M2} for the centered case. We collect these results below.

\begin{theorem}[Carneiro, Madrid and Pierce, 2017 - cf. \cite{CMP}] \label{Thm7.5}
The map $\wt{\M} :BV(\Z) \to BV(\Z)$ is continuous.
\end{theorem}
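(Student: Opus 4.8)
The plan is to derive continuity from the already established boundedness $\widetilde{\M}\colon BV(\Z)\to BV(\Z)$ (which follows from the inequality $\var(\widetilde{\M}f)\le\var(f)$ together with $\|\widetilde{\M}f\|_{\ell^\infty(\Z)}\le\|f\|_{\ell^\infty(\Z)}\le\|f\|_{BV(\Z)}$) by supplying the missing ingredient, an equi-tightness estimate. Let $f_j\to f$ in $BV(\Z)$; by definition of the norm this means $f_j\to f$ pointwise and $\var(f_j-f)=\|f_j'-f'\|_{\ell^1(\Z)}\to 0$, and since $\|\cdot\|_{\ell^\infty(\Z)}\le\|\cdot\|_{BV(\Z)}$ we also have $f_j\to f$ uniformly. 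Sublinearity then gives $|\widetilde{\M}f_j-\widetilde{\M}f|\le\widetilde{\M}|f_j-f|\le\|f_j-f\|_{\ell^\infty(\Z)}\to 0$, so $\widetilde{\M}f_j\to\widetilde{\M}f$ uniformly; in particular $(\widetilde{\M}f_j)'(n)\to(\widetilde{\M}f)'(n)$ for each fixed $n$. What remains is to upgrade this pointwise convergence of the $\ell^1(\Z)$-sequences $(\widetilde{\M}f_j)'$ to convergence in $\ell^1(\Z)$, i.e.\ $\var(\widetilde{\M}f_j-\widetilde{\M}f)\to 0$. Cutting $\Z$ into a window $\{|n|\le R\}$ and its complement, and using that $(\widetilde{\M}f)'\in\ell^1(\Z)$ has vanishing tails, this reduces to the equi-tightness statement: for every $\varepsilon>0$ there is $R$ such that $\sum_{|n|>R}|(\widetilde{\M}f_j)'(n)|<\varepsilon$ for all $j$; the window contribution is then a finite sum of terms each tending to $0$, hence tends to $0$ as $j\to\infty$.

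The tool for the equi-tightness should be the qualitative description of the uncentered maximal function that underlies the bound $\var(\widetilde{\M}f)\le\var(f)$. Writing $\widetilde{\M}^{\pm}$ for the one-sided maximal operators (supremum of averages over runs $[n,n+s]$, resp.\ $[n-r,n]$), one has $\widetilde{\M}f=\max(\widetilde{\M}^{+}f,\widetilde{\M}^{-}f)$ pointwise, and on each maximal run of the disconnecting set $\{\widetilde{\M}^{+}f>|f|\}$ the function $\widetilde{\M}^{+}f$ is monotone and agrees with $|f|$ at the endpoints of the run (and symmetrically for $\widetilde{\M}^{-}f$). From this I would extract a quantitative tail estimate of the schematic form
\begin{equation*}
\sum_{|n|>R}\big|(\widetilde{\M}^{\pm}g)'(n)\big|\ \le\ \sum_{|n|>R'}|g'(n)|\ +\ \omega_{R}(g),
\end{equation*}
where $R'$ is a window comparable to $R$ and $\omega_{R}(g)\to 0$ as $R\to\infty$ uniformly over families with $\var(g)$ bounded and $g\to g_{0}$ pointwise for a fixed limit function. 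The first term accounts for disconnecting runs entirely beyond $\pm R$ and for the contact set; the error $\omega_{R}(g)$ collects the contribution of a disconnecting run straddling $+R$ or $-R$, on which $\widetilde{\M}^{\pm}g$ is monotone, so its oscillation beyond $\pm R$ telescopes and is dominated by the value of $\widetilde{\M}^{\pm}g$ at $\pm R$ minus its value at the far endpoint of that run (which lies in the contact set), a difference that is small once $R$ is large. The elementary inequality $\var(\max(u,v))\le\var(u)+\var(v)$ then lets one pass back from $\widetilde{\M}^{\pm}$ to $\widetilde{\M}$, so it suffices to run this step for the one-sided operators.

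Applying the tail estimate with $g=f_{j}$ and using $\var(f_{j}-f)\to 0$: the term $\sum_{|n|>R'}|f_{j}'(n)|$ is equi-small since $f_{j}'\to f'$ in $\ell^1(\Z)$, and $\omega_{R}(f_{j})$ is equi-small since $\var(f_{j})$ is bounded and $f_{j}\to f$ pointwise. This is the desired equi-tightness, and combined with the reductions of the first paragraph it yields $\var(\widetilde{\M}f_{j}-\widetilde{\M}f)\to 0$, proving that $\widetilde{\M}\colon BV(\Z)\to BV(\Z)$ is continuous. I expect the main obstacle to be the second step — turning the structural picture into a genuinely quantitative, uniform tail bound. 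The delicate points are: (i) a single disconnecting run may reach from near the origin out past $\pm R$ and so contribute to the tail even though $g$ is essentially constant there, so one must argue that its contribution telescopes to a controllable boundary term and that this term decays in $R$ uniformly over the family; (ii) summing such contributions over all runs and over the contact set requires bookkeeping parallel to the proof of $\var(\widetilde{\M}f)\le\var(f)$; and (iii) one must handle the edge cases in which the supremum defining $\widetilde{\M}^{\pm}f(n)$ is attained only in the limit of infinitely long runs.
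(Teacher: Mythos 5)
The survey does not actually prove this theorem; it only cites \cite[Theorem 2]{CMP}, and describes the method there as a fine analysis of the uncentered and one-sided maximal functions on the disconnecting set. Your overall strategy is therefore pointed in the right direction, and your first reduction is correct and worth keeping: $BV(\Z)$-convergence gives $\|f_j-f\|_{\ell^\infty(\Z)}\to 0$, sublinearity gives $\|\wt{\M}f_j-\wt{\M}f\|_{\ell^\infty(\Z)}\le\|f_j-f\|_{\ell^\infty(\Z)}\to 0$, hence $(\wt{\M}f_j)'\to(\wt{\M}f)'$ pointwise, and the Vitali-type window/tail splitting correctly reduces the theorem to the equi-tightness statement $\sup_j\sum_{|n|>R}|(\wt{\M}f_j)'(n)|\to 0$ as $R\to\infty$.

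There are, however, two problems with the second half. First, a concrete error: in the discrete setting the identity $\wt{\M}f=\max(\wt{\M}^{+}f,\wt{\M}^{-}f)$ is false with the one-sided operators as you define them, because the average over $[n-r,n+s]$ is not a convex combination of the averages over $[n-r,n]$ and $[n,n+s]$ (the center point is counted in both). For $f=\delta_{-1}+\delta_{1}$ one has $\wt{\M}f(0)=2/3$ while both one-sided values at $0$ equal $1/2$. So the passage to one-sided operators via $\var(\max(u,v))\le\var(u)+\var(v)$ does not get off the ground as written; one must either work with $\wt{\M}$ directly, using that it has no local maxima on the detachment set and is therefore V-shaped on each detachment run, or set up the one-sided decomposition more carefully. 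Second, and more seriously, the whole difficulty of the theorem is concentrated in the tail estimate, which you state only schematically: the error term $\omega_{R}(g)$ is undefined, and the claim that it tends to zero ``uniformly over families with $\var(g)$ bounded and $g\to g_0$ pointwise'' is not a property of a single function $g$, so the displayed inequality is not a well-posed lemma that could be proved or applied. The delicate points you list yourself --- detachment runs straddling $\pm R$ or extending to infinity, tail contributions of $\wt{\M}f_j$ that are driven by the behavior of $f_j$ near the origin rather than by the tail of $f_j'$, and the bookkeeping over all runs and the contact set --- are precisely where the argument of \cite{CMP} does its work, and they are not resolved here. As it stands, the proposal is a correct reduction plus a plausible plan whose central lemma is missing, not a proof.
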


\begin{theorem}[Madrid, 2018 - cf. \cite{M2}]\label{Thm7.6}
The map $\M :BV(\Z) \to BV(\Z)$ is continuous.
\end{theorem}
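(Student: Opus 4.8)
The plan is to reduce the statement to a convergence of total variations and then to rule out escape of variation to infinity. Assume $\|f_j-f\|_{BV(\Z)}\to 0$. Since $|g(n)-g(-\infty)|\le\var(g)$ for every $g\in BV(\Z)$ and every $n\in\Z$, convergence in $BV(\Z)$ forces uniform convergence, $\|f_j-f\|_{\ell^\infty(\Z)}\le\|f_j-f\|_{BV(\Z)}\to 0$; sublinearity of $\M$ then gives $\|\M f_j-\M f\|_{\ell^\infty(\Z)}\le\|\M(f_j-f)\|_{\ell^\infty(\Z)}\le\|f_j-f\|_{\ell^\infty(\Z)}\to 0$, so $\M f_j\to\M f$ uniformly. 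By Temur's theorem $\M f_j,\M f\in BV(\Z)$ (having finite variation, they also admit limits at $\pm\infty$), hence the boundary terms converge, $|\M f_j(-\infty)-\M f(-\infty)|\to 0$, the discrete derivatives converge pointwise, $(\M f_j)'(n)\to(\M f)'(n)$ for every $n$, and $(\M f_j)',(\M f)'\in\ell^1(\Z)$. Now I use the Radon--Riesz (Scheff\'e) property of $\ell^1$ --- if $a_j\to a$ coordinatewise and $\|a_j\|_{\ell^1}\to\|a\|_{\ell^1}$, then $\|a_j-a\|_{\ell^1}\to 0$, a one-line consequence of Fatou's lemma applied to $|a_j|+|a|-|a_j-a|\ge 0$ --- to reduce the whole problem to showing
\begin{equation*}
\var(\M f_j)=\|(\M f_j)'\|_{\ell^1(\Z)}\longrightarrow\|(\M f)'\|_{\ell^1(\Z)}=\var(\M f).
\end{equation*}

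One half of this is immediate: since $(\M f_j)'\to(\M f)'$ pointwise, Fatou's lemma gives $\var(\M f)\le\liminf_j\var(\M f_j)$. The real task is therefore to prevent concentration of variation at infinity, i.e. to prove $\limsup_j\var(\M f_j)\le\var(\M f)$. Partitioning the index set of consecutive differences into those inside $[-R,R]$ and those outside yields $\var(\M f_j)=\var_{[-R,R]}(\M f_j)+\var_{|n|>R}(\M f_j)$, and the uniform convergence $\M f_j\to\M f$ gives, for each fixed $R$, $\var_{[-R,R]}(\M f_j)\to\var_{[-R,R]}(\M f)\le\var(\M f)$. Thus everything comes down to the tightness estimate $\limsup_j\var_{|n|>R}(\M f_j)\le\varepsilon(R)$ with $\varepsilon(R)\to 0$ as $R\to\infty$.

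To obtain tightness of $\{\M f_j\}$, first observe that $\{f_j\}$ is tight: from $\var_{|n|>\rho}(f_j)\le\var(f_j-f)+\var_{|n|>\rho}(f)$ one gets $\limsup_j\var_{|n|>\rho}(f_j)\le\var_{|n|>\rho}(f)\to 0$ as $\rho\to\infty$ (the finitely many small indices $j$ are handled using the tightness of each individual $f_j\in BV(\Z)$). It then remains to transfer tightness from $f_j$ to $\M f_j$, and for this one needs a \emph{localized} variation bound for the centered maximal operator, roughly of the form: $\var_{|n|>R}(\M g)$ is controlled by $\var_{|n|>R/2}(g)$ plus a transition term that is negligible as $R\to\infty$. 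The natural mechanism is a short/long window dichotomy at a point $n$ with $|n|>R$: windows of radius $r<|n|-R/2$ stay inside $\{|m|>R/2\}$, so this part of the maximal function depends only on the restriction of $g$ there and its variation can be estimated through Temur's theorem applied to a suitable truncation of $g$; windows of radius $r\ge|n|-R/2$ reach back toward the bulk of $g$ and contribute a piece that is essentially monotone in $n$, hence of small total variation on the deep tail $\{|n|>R\}$. Making this precise --- in particular absorbing the spurious jump created by truncating $g$ and verifying that the long-window part genuinely has small variation on $\{|n|>R\}$ --- is the crux of the argument, and I expect it to be the main obstacle. Two features make it delicate: the global bound of Temur, with its enormous constant, is of no use for locating where the variation of $\M g$ lives; and, unlike the uncentered situation behind Theorem~\ref{Thm7.5}, one cannot appeal to the absence of local maxima of $\M g$ in the detachment set, so a genuinely local statement must be extracted --- plausibly by revisiting the induction-on-scales scheme of Kurka and Temur, which is local in spirit. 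Granting such a bound, one concludes $\limsup_j\var(\M f_j)\le\var(\M f)+\varepsilon(R)$ for every $R$, lets $R\to\infty$, and combines this with the reduction of the first paragraph to finish the proof.
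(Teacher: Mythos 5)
First, a caveat on the comparison: this survey does not actually prove Theorem~\ref{Thm7.6}; it only states the result and cites Madrid's paper \cite{M2}, so there is no in-paper proof to measure your argument against. Judged on its own terms, the first half of your proposal is correct and standard: $BV(\Z)$-convergence dominates $\ell^\infty$-convergence, sublinearity transfers this to $\M f_j\to\M f$ uniformly, the boundary terms and the discrete derivatives converge pointwise, Fatou gives lower semicontinuity of the variation, and the Scheff\'e/Radon--Riesz property of $\ell^1$ correctly reduces the whole problem to $\var(\M f_j)\to\var(\M f)$, hence (after the finite-window reduction) to the tightness statement $\limsup_j\var_{|n|>R}(\M f_j)\to 0$ as $R\to\infty$. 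All of that is soft and I see no errors in it.

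The genuine gap is exactly where you flag it: the localized variation bound for the \emph{centered} operator, i.e.\ an estimate of the type $\var_{|n|>R}(\M g)\lesssim\var_{|n|>R/2}(g)+\varepsilon(R)$ uniformly over the family $\{f_j\}$. This is not a technicality to be ``granted''; it is the entire content of the theorem, since everything preceding it never uses any structure of $\M$ beyond sublinearity and Temur's global bound. Your short/long window dichotomy is the right heuristic, but as stated it does not close: (a) truncating $g$ outside $\{|m|>R/2\}$ creates a jump whose effect on $\M$ propagates over a range of scales comparable to $R$, and the variation it contributes on $\{|n|>R\}$ is not obviously small; (b) the claim that the long-window part is ``essentially monotone in $n$'' is unsupported for general $g\in BV(\Z)$ --- such $g$ need not lie in $\ell^1$ or decay at all (e.g.\ $g$ can tend to a nonzero constant), so the long-window averages are not governed by a mass-over-distance decay, and competing scales can produce oscillation of the centered maximal function deep in the tail (this is precisely the phenomenon --- local maxima of $\M g$ away from the contact set --- that makes the centered case harder than the uncentered one); and (c) Temur's theorem applied to a truncation gives control by the variation of the truncation, which includes the artificial jump, so one must show that jump's contribution is negligible, which again is the same difficulty in disguise. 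Until a genuinely local variation estimate for $\M$ is proved (in \cite{M2} this is done by a careful analysis of where the optimal radii live, not by a formal truncation), the proposal is an honest and correct reduction, but not a proof.
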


As in the continuous cases, the fractional discrete maximal operators are not continuous on $BV(\Z)$, as observed in \cite[Theorems 5 and 6]{CMP}.

\begin{theorem}[Carneiro, Madrid and Pierce, 2017 - cf. \cite{CMP}]\label{Thm7.7}
Let $0< \beta < 1$ and $q = 1/(1-\beta)$.
\begin{itemize}
\item[(i)] (uncentered case) The map $f \mapsto \big(\wt{\M}_{\beta}f\big)'$ is not continuous from $BV(\Z)$ to $\ell^q(\Z)$, i.e. there is a sequence $\{f_j\}_{j\geq 1} \subset BV(\Z)$ and a function $f \in BV(\Z)$ such that $\|f_j - f\|_{BV(\Z)} \to 0$ as $j \to \infty$ but 
$$\big\|\big(\wt{\M}_{\beta}f_j\big)' - \big(\wt{\M}_{\beta}f\big)'\big\|_{\ell^q(\Z)} = \var_q \big(\wt{\M}_{\beta}f_j - \wt{\M}_{\beta}f\big) \nrightarrow 0$$ 
as $j \to \infty$.

\smallskip

\item[(ii)] (centered case) There is a sequence $\{f_j\}_{j\geq 1} \subset BV(\Z)$ and a function $f \in BV(\Z)$ such that $\|f_j - f\|_{BV(\Z)} \to 0$ as $j \to \infty$ but $\var_q \big(\M_{\beta}f_j - \M_{\beta}f\big) \nrightarrow 0$ as $j \to \infty$.
\end{itemize}
\end{theorem}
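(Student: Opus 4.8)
My plan is to exhibit, for each of the two operators, an explicit sequence $f_j \to f$ in $BV(\Z)$ along which $\var_q$ of the difference of the fractional maximal functions stays bounded away from $0$. The mechanism I would exploit is that convergence in $BV(\Z)$ is strictly weaker than convergence in $\ell^1(\Z)$: a perturbation consisting of a very wide but very low ``plateau'' placed far from the origin has arbitrarily small total variation yet large $\ell^1$-mass, and because the fractional maximal operator weights long intervals favorably (the denominator is $(2r+1)^{1-\beta}$ with $1-\beta<1$) it detects such a plateau from bounded distance; this pushes the maximal function up, near the origin, by a fixed positive amount. This feature is forced on any counterexample: if $f_j \to f$ in $\ell^1(\Z)$, the $\ell^1(\Z)\to\ell^q(\Z)$ continuity of $f\mapsto(\M_\beta f)'$ and $f\mapsto(\wt{\M}_\beta f)'$ coming from Theorem~\ref{Thm6.4} would already give $\var_q(\M_\beta f_j-\M_\beta f)\to 0$, so the perturbation must fail to converge in $\ell^1(\Z)$.

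Concretely, I would take $f=\one_{\{0\}}$ (so $f(0)=1$ and $f(n)=0$ otherwise) and
\begin{equation*}
f_j \;=\; f \;+\; h_j\,\one_{[R_j,\,R_j+a_j-1]},
\end{equation*}
with $h_j\to 0$, $a_j\to\infty$, $R_j\to\infty$ chosen so that $h_j a_j^{\beta}\to\lambda$ for a fixed $\lambda>0$, $R_j/a_j\to 0$ and $h_j R_j\to 0$ (for instance $h_j=1/j$, $a_j=\lceil(\lambda j)^{1/\beta}\rceil$, $R_j=\lceil\log(j+2)\rceil$). Then $\|f_j-f\|_{BV(\Z)}=\var\big(h_j\one_{[R_j,R_j+a_j-1]}\big)=2h_j\to 0$, while $f_j$ and $f$ are finitely supported, so $\M_\beta f_j,\M_\beta f\not\equiv\infty$ (both are bounded by $\|\cdot\|_{\ell^1(\Z)}$); the same formulas and remarks apply to $\wt{\M}_\beta$.

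The analytic heart of the argument is the pointwise limit: for each fixed $n\in\Z$,
\begin{equation*}
\M_\beta f_j(n)\;\longrightarrow\;\max\big(c,\ \M_\beta\one_{\{0\}}(n)\big)\qquad(j\to\infty),
\end{equation*}
for a constant $c=c(\lambda,\beta)>0$ (one computes $c=\lambda$ for $\wt{\M}_\beta$ and $c=\lambda\,2^{-(1-\beta)}$ for $\M_\beta$), and likewise for $\wt{\M}_\beta$. I would prove this by sorting the competing radii in the supremum defining $\M_\beta f_j(n)$: radii that see only the point mass at the origin contribute $\M_\beta\one_{\{0\}}(n)=(2|n|+1)^{-(1-\beta)}$, a quantity independent of $j$; a radius capturing the entire plateau (in the centered case such an interval automatically also covers the origin; in the uncentered case one compares the ``plateau only'' and ``plateau plus origin'' intervals) has value of order $h_j a_j\cdot(\text{interval length})^{-(1-\beta)}\to c$; and radii capturing only part of the plateau are checked to give no larger a value, since the relevant one-parameter expression is decreasing-then-increasing in the number of captured plateau points --- this is where $R_j/a_j\to 0$ and $h_j R_j\to 0$ enter. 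Because $\M_\beta\one_{\{0\}}(n)\le 1$ for every $n$, the limit is a bounded nonnegative function
\begin{equation*}
D_\infty(n):=\max\big(c,\ \M_\beta\one_{\{0\}}(n)\big)-\M_\beta\one_{\{0\}}(n),
\end{equation*}
which is not identically $0$ (it tends to $c>0$ as $|n|\to\infty$) and has finite, strictly positive $q$-variation; finiteness follows from $\var_q(D_\infty)\le\var_q(\M_\beta\one_{\{0\}})<\infty$, the latter being the discrete analogue of Theorem~\ref{Thm5.2} applied to $\one_{\{0\}}$.

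To finish, set $D_j:=\M_\beta f_j-\M_\beta f$, so $D_j\to D_\infty$ pointwise on $\Z$. Lower semicontinuity of the $q$-variation under pointwise convergence --- immediate from Fatou's lemma applied to $\sum_{n}|g(n+1)-g(n)|^q$ --- gives
\begin{equation*}
\liminf_{j\to\infty}\var_q\big(\M_\beta f_j-\M_\beta f\big)\;\ge\;\var_q(D_\infty)\;>\;0,
\end{equation*}
and since $(\M_\beta f_j)'-(\M_\beta f)'=(\M_\beta f_j-\M_\beta f)'$, this is exactly the asserted failure of $\ell^q(\Z)$-convergence in part (ii). Repeating the argument verbatim with $\wt{\M}_\beta$ in place of $\M_\beta$ yields part (i). I expect the only genuinely delicate step to be the pointwise convergence $\M_\beta f_j(n)\to\max(c,\M_\beta\one_{\{0\}}(n))$: one must rule out an intermediate radius producing a value exceeding the claimed limit, and the growth conditions $R_j/a_j\to 0$, $h_j R_j\to 0$ are arranged precisely to guarantee that.
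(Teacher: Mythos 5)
Your construction is correct and is essentially the argument behind \cite[Theorems 5 and 6]{CMP}, which this survey cites without reproducing: one perturbs $f$ by a low, wide, distant plateau $h_j\one_{I_j}$ with $\var(h_j\one_{I_j})=2h_j\to 0$ but $h_j|I_j|^{\beta}$ bounded below, so that the fractional maximal function is lifted by a fixed constant $c>0$ away from the origin, and then lower semicontinuity of $\var_q$ under pointwise convergence (Fatou) yields $\liminf_j \var_q(\M_\beta f_j-\M_\beta f)>0$. You have also correctly isolated the one genuinely delicate point, namely ruling out intermediate radii that capture only part of the plateau, which follows from the monotonicity in $t$ of $t\mapsto h_j t\,(R_j+t)^{-(1-\beta)}$.
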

Note again the slight difference in the wording between parts (i) and (ii) of the statement above. This is due to the fact that the map $f \mapsto (\M_{\beta}f)'$ is not yet known to be bounded from $BV(\Z)$ to $\ell^q(\Z)$. Nevertheless, it is not continuous.

\subsection{Summary} The table below collects the sixteen different situations in which we analyzed the endpoint continuity (all of them one-dimensional). These arise from the following pairs of possibilities: (i) centered vs. uncentered maximal operator; (ii) classical vs. fractional maximal operator; (iii) continuous vs. discrete setting; (iv) $W^{1,1}$ (or $\ell^1$) vs. $BV$ continuity. The word YES in a box below means that the continuity of the corresponding map has been established, whereas the word NO means that the continuity fails. The remaining boxes are marked as OPEN problems.

\begin{table}[h]
\renewcommand{\arraystretch}{1.3}
\centering
\caption{One-dimensional endpoint continuity program}
\label{Table-ECP}
\begin{tabular}{|c|c|c|c|c|}
\hline
 \raisebox{-1.3\height}{------------}&  \parbox[t]{2.8cm}{  $W^{1,1}-$continuity; \\ continuous setting} &  \parbox[t]{2.8cm}{  $BV-$continuity; \\ continuous setting} & \parbox[t]{2.6cm}{  $\ell^1-$continuity; \\ discrete setting} &  \parbox[t]{2.5cm}{  $BV-$continuity; \\ discrete setting} \\ [0.5cm]
\hline
 \parbox[t]{3.3cm}{ Centered classical \\ maximal operator} &  \raisebox{-0.8\height}{OPEN: Question 12} & \raisebox{-0.8\height}{OPEN: Question 14} & \raisebox{-0.8\height}{YES$^2$: Thm \ref{Thm6.4}}  &\raisebox{-0.8\height}{YES: Thm \ref{Thm7.6}}\\[0.5cm]
 \hline
  \parbox[t]{3.3cm}{ Uncentered classical \\ maximal operator} &  \raisebox{-0.8\height}{YES: Thm 7.2} &  \raisebox{-0.8\height}{OPEN: Question 13} &  \raisebox{-0.8\height}{YES$^2$: Thm \ref{Thm6.4}} &  \raisebox{-0.8\height}{YES: Thm \ref{Thm7.5}} \\[0.5cm]
 \hline
 \parbox[t]{3.3cm}{ Centered fractional \\ maximal operator} &  \raisebox{-0.8\height}{OPEN$^1$: Question 15} &  \raisebox{-0.8\height}{NO$^1$: Thm \ref{Thm7.4}} &  \raisebox{-0.8\height}{YES: Thm \ref{Thm6.4} } & \raisebox{-0.8\height}{NO$^1$: Thm \ref{Thm7.7}}  \\[0.5cm]
 \hline
\parbox[t]{3.3cm}{ Uncentered fractional \\ maximal operator} &  \raisebox{-0.8\height}{YES: Thm 7.3} &  \raisebox{-0.8\height}{NO: Thm \ref{Thm7.4}} &  \raisebox{-0.8\height}{YES: Thm \ref{Thm6.4}}  & \raisebox{-0.8\height}{NO: Thm \ref{Thm7.7}} \\[0.5cm]
 \hline
\end{tabular}
\vspace{0.05cm}
\flushleft{
\ \ \footnotesize{$^1$ Corresponding boundedness result not yet known.}\\
\ \ $^2$ Result previously obtained in \cite[Theorem 1]{CH}.}
\end{table}

\section{Acknowledgments}
\noindent The author acknowledges support from CNPq-Brazil grants $305612/2014-0$ and $477218/2013-0$, and FAPERJ grant $E-26/103.010/2012$. The author is thankful to Jos\'{e} Madrid and Jo\~{a}o Pedro Ramos for reviewing the manuscript and providing valuable feedback. The author is also thankful to all of the members of the Scientific and Organizing Committees of the {\it CIMPA 2017 Research School - Harmonic Analysis, Geometric Measure Theory and Applications}, in Buenos Aires, Argentina, for the wonderful event.

\end{document}